\documentclass[reqno,english]{amsart}%
\usepackage{amsfonts,amsmath,latexsym,verbatim,amscd,mathrsfs,color,array}
\usepackage[colorlinks=true]{hyperref}
\usepackage{amsmath,amssymb,amsthm,amsfonts,graphicx,color}
\usepackage{amssymb}
\usepackage{pdfsync}
\usepackage{epstopdf}
\usepackage{cite}
\usepackage{graphicx}
\usepackage{amsmath}
\usepackage{amsfonts}%
\setcounter{MaxMatrixCols}{30}
\providecommand{\U}[1]{\protect\rule{.1in}{.1in}}

\numberwithin{equation}{section}

\newtheorem{theorem}{Theorem}[section]
\newtheorem{corollary}{Corollary}[section]
\newtheorem{lemma}{Lemma}[section]
\newtheorem{proposition}{Proposition}[section]
\newtheorem{remark}{Remark}[section]
\newtheorem{definition}{Definition}[section]

\numberwithin{equation}{section}

\newcommand{\bbr}{\mathbb{R}}

\newcommand{\bbn}{\mathbb{N}}
\newcommand{\ve}{\varepsilon}
\newcommand{\vt}{\vartheta}

\newcommand{\bd}{\begin{definition}}
	\newcommand{\ed}{\end{definition}}

\newcommand{\br}{\begin{remark}}
	\newcommand{\er}{\end{remark}}

\newcommand{\be}{\begin{equation}}
	\newcommand{\ee}{\end{equation}}

\newcommand{\bc}{\begin{corollary}}
	\newcommand{\ec}{\end{corollary}}

\begin{document}
	\title[Infinitely many positive solution]{Infinitely many nonradial positive solutions for multi-species nonlinear Schr\"odinger systems in $\bbr^N$}
	\author[T. Li]{Tuoxin Li}
	\address{\noindent Department of Mathematics, University of British Columbia,
		Vancouver, B.C., Canada, V6T 1Z2}
	\email{tuoxin@math.ubc.ca}
	
	\author[J. Wei]{Juncheng Wei}
	\address{\noindent Department of Mathematics, University of British Columbia,
		Vancouver, B.C., Canada, V6T 1Z2}
	\email{jcwei@math.ubc.ca}
	
	\author[Y. Wu]{Yuanze Wu}
	\address{\noindent  School of Mathematics, China
		University of Mining and Technology, Xuzhou, 221116, P.R. China }
	\email{wuyz850306@cumt.edu.cn}
	
	\begin{abstract}
		In this paper, we consider the multi-species nonlinear Schr\"odinger systems in $\bbr^N$:
		\begin{equation*}
			\left\{\aligned&-\Delta u_j+V_j(x)u_j=\mu_ju_j^3+\sum_{i=1;i\not=j}^d\beta_{i,j} u_i^2u_j\quad\text{in }\bbr^N,\\
			&u_j(x)>0\quad\text{in }\bbr^N,\\
			&u_j(x)\to0\quad\text{as }|x|\to+\infty,\quad j=1,2,\cdots,d,\endaligned\right.
		\end{equation*}
		where $N=2,3$, $\mu_j>0$ are constants, $\beta_{i,j}=\beta_{j,i}\not=0$ are coupling parameters, $d\geq2$ and $V_j(x)$ are potentials.  By Ljapunov-Schmidt reduction arguments, we construct infinitely many nonradial positive solutions of the above system under some mild assumptions on potentials $V_j(x)$ and coupling parameters $\{\beta_{i,j}\}$, {\it without any symmetric assumptions on the limit case of the above system}.  Our result, giving a positive answer to the conjecture in Pistoia and Vaira \cite{PV22} and extending the results in \cite{PW13,PV22}, reveals {\it new phenomenon} in the case of $N=2$ and $d=2$ and is {\it almost optimal} for the coupling parameters $\{\beta_{i,j}\}$.
		
		\vspace{3mm} \noindent{\bf Keywords:} nonlinear Schrodinger systems, infinitely many positive solutions, reduction method, min-max argument.
		
		\vspace{3mm}\noindent {\bf AMS} Subject Classification 2010: 35B09; 35B33; 35B40; 35J20.%
		
	\end{abstract}
	
	\date{}
	\maketitle
	
	\section{Introduction}
	\subsection{Backgrounds}
	In this paper, we consider the multi-species nonlinear Schr\"odinger systems in $\bbr^N$:
	\begin{equation}\label{eq0001}
		\left\{\aligned&-\Delta u_j+V_j(x)u_j=\mu_ju_j^3+\sum_{i=1;i\not=j}^d\beta_{i,j} u_i^2u_j\quad\text{in }\bbr^N,\\
		&u_j(x)>0\quad\text{in }\bbr^N,\\
		&u_j(x)\to0\quad\text{as }|x|\to+\infty,\quad j=1,2,\cdots,d,\endaligned\right.
	\end{equation}
	where $N=2,3$, $\mu_j>0$ are constants, $\beta_{i,j}=\beta_{j,i}\not=0$ are coupling parameters, $d\geq2$ and $V_j(x)$ are potentials.
	
	\vskip0.2in
	
	It is well known that solutions of \eqref{eq0001} are related to the bright solitons of the Gross-Pitaevskii equations (cf. \cite{HMEWC98}),
	\begin{equation}\label{eqn9877}
		\left\{\aligned&\iota\frac{\partial\Psi_j}{\partial t}=\Delta \Psi_j-V_j(x)\Psi_j+\mu_j|\Psi_j|^2\Psi_j+\sum_{i=1;i\not=j}^d\beta_{i,j}|\Psi_i|^{2}\Psi_j,\\
		&\Psi_j=\Psi_j(t,x)\in H^1(\bbr^{N+1}; \mathbb{C}),\ \ j=1,2,\cdots,d,\ \ N=2,3,\endaligned\right.%
	\end{equation}
	by the relation $\Psi_j(t,x)=e^{-\iota\lambda_j t}u_j(x)$, where $\iota$ is the imaginary unit.  The Gross-Pitaevskii equations~\eqref{eqn9877} have applications in many physical models, such as in nonlinear optics (cf. \cite{AA99}) and in Bose-Einstein condensates for multi-species condensates (cf. \cite{CLLL04,EGBB97,R03}).  In Bose-Einstein condensates for multi-species condensates, $\mu_j$ and $\beta_{i,j}$ in \eqref{eq0001} are the intraspecies and interspecies scattering lengths respectively, while $V_j(x)$ stands for the magnetic trap (cf. \cite{TW98}) arising from the chemical potentials.  The sign of the scattering length $\beta_{i,j}$ determines whether the interactions of states $i\rangle$ and $j\rangle$ are repulsive ($\beta_{i,j}<0$) or attractive ($\beta_{i,j}>0$).
	
	\vskip0.2in
	
In the autonomous case, i.e., the potentials $V_j$ are positive constants for all $j=1,2,\cdots,d$, multi-species nonlinear Schr\"odinger systems~\eqref{eq0001} have been studied extensively in the pase two decades after the pioneer work \cite{LW05}.  By using variational methods, Lyapunov-Schmidt reduction methods or bifurcation methods, various theorems, about the existence, multiplicity and qualitative properties of nontrivial solutions of autonomous multi-species nonlinear Schr\"odinger systems like \eqref{eq0001}, have been established in the literature under various assumptions on the coupling parameters.  Since it seems almost impossible for us to provide a complete list of references, we refer the readers only to \cite{AC06,AC07,BDW10,BJS16,BS17,BS19,BTWW13,BW06,BWW07,BZZ21,CD10,CLLL04,CTV05,CZ13,DW09,DWW10,DWZ12,GJ16,GJ18,MMP06,M15,NTTV10,NTTV12,PS18,PT17,PST20,S07,TY20,TYZ22,
		WW07,WW08,WW081,WZZ22} and the references therein for the two coupled case $d=2$, \cite{C16,C161,COT16,CS19,LW08,PV22,ST15,STTZ16,SZ15,TT12,TT121} and the references therein for the multi-coupled case~$d\geq3$ with the purely repulsive couplings or the purely attractive couplings and \cite{BLM21,BLW16,BMW21,BSW16,BSW161,C14,CP22,DW12,LW05,PWW19,SW15,SW151,S15,ST16,
		TTVW11,WW19} and the references therein for the multi-coupled case~$d\geq3$ with the mixed  couplings.  Here, we call the couplings $\{\beta_{i,j}\}$ is purely repulsive if $\beta_{i,j}<0$ for all $i\not=j$, we call the couplings $\{\beta_{i,j}\}$ is purely attractive if $\beta_{i,j}>0$ for all $i\not=j$ and we call the couplings $\{\beta_{i,j}\}$ is mixed if there exist $(i_1,j_1)$ and $(i_2,j_2)$ such that $\beta_{i_1,j_1}>0$ and $\beta_{i_2,j_2}<0$.
	
	\vskip0.2in
	
In the non-autonomous case, it is well known nowadays that the magnetic trapping potentials will play important roles in constructing nontrivial solutions of nonlinear Schr\"odinger equations or systems, see, for example, \cite{B13,CDS05,CPS13,DS12,PW13,PV22,R92,TW13,TV09,WY10} and the references therein.  In particular, in \cite{WY10}, Wei and Yan constructed infinitely many nonradial positive solutions of the nonlinear Schr\"odinger equation,
\begin{eqnarray}
\label{scalar}
		\left\{\aligned&-\Delta u+V(x)u=|u|^{p-1}u\quad\text{in }\bbr^N,\\
		&u(x)\to0\quad\text{as }|x|\to+\infty,\endaligned\right.
	\end{eqnarray}
	where $V(x)$ satisfies the assumption:
	\begin{enumerate}
		\item[$(V_*)$]\quad $V(x)>0$ is continuous and radial with $V(x)=1+\frac{\delta}{|x|^{\nu}}+O\bigg(\frac{1}{|x|^{\nu+\ve}}\bigg)$ as $|x|\to+\infty$, where $\delta\in\bbr$ and $\nu>1$ and $\ve>0$.
	\end{enumerate}
Let us briefly sketch Wei and Yan's construction in \cite{WY10}.  Let $w_j$ be the unique (up to translations) positive solution of the following equation:
	\begin{equation}\label{eq0003}
		\left\{\aligned&-\Delta u+\lambda_ju=\mu_ju^3\quad\text{in }\bbr^N,\\
		&u(x)>0\quad\text{in }\bbr^N,\\
		&u(x)\to0\quad\text{as }|x|\to+\infty,\endaligned\right.
	\end{equation}
where $j=1,2,\cdots,d$.
	Then it is well known that there exists $A_{N,j}>0$, which only depends on $N$ and $j$, such that
	\begin{eqnarray}\label{eqnnnew0008}
		w_j(x)=A_{N,j}(1+O(|x|^{-1}))|x|^{\frac{1-N}{2}}e^{-\sqrt{\lambda_j}|x|}\quad\text{as }|x|\to+\infty.
	\end{eqnarray}
For the sake of clarity, we denote $w_j=\widetilde{w}_j$ in the partially symmetric case $\lambda_j=1$ for all $j$ and we denote $w_j=w$ in the totally symmetric case $\lambda_j=1$ and $\mu_j=1$ for all $j$.  Then by the assumption~$(V_*)$, \eqref{eq0003} and \eqref{eqnnnew0008}, $\sum_{t=1}^{\vartheta}w(x-\eta_{t})+v_*$
is very close to a genuine solution of \eqref{scalar} if
	\begin{eqnarray*}
		\min_{t\not=s}|\eta_{t}-\eta_{s}|\to+\infty\quad\text{and}\quad\min_{t}|\eta_{t}|\to+\infty
	\end{eqnarray*}
	as $\vartheta\to+\infty$ by the Lyapunov-Schmidt reduction, where $\vartheta\in\bbn$, $t=1,2,\cdots,\vartheta$ and $v_*$ is much smaller than the approximation $\sum_{t=1}^{\vartheta}w(x-\eta_{t})$ in a suitable sense.  To prove $\sum_{t=1}^{\vartheta}w(x-\eta_{t})+v_*$ is a genuine solution of \eqref{scalar}, the adjustment of the locations of $\{\eta_{t}\}$ is needed.  In \cite{WY10},
the key idea is to put $\{\eta_{t}\}$ on a circle with a large radius, which are invariant under the action of a discrete subgroup of $SO(N)$, to reduce the number of parameters in adjusting $\{\eta_{t}\}$ and using $\vartheta$, the number of spikes, as a parameter in the construction of spiked solutions of \eqref{scalar}.  More precisely, Wei and Yan choose $\eta_{t}=\rho_\vartheta\xi_{t}$ where $\rho_{\vartheta}\sim\vartheta\log\vartheta$ is the radius and the locations $\xi_t$ satisfies
	\begin{eqnarray}\label{eqnnnew4444}
		\xi_{t}=\bigg(\cos\bigg(\frac{2(t-1)\pi}{\vt}\bigg), \sin\bigg(\frac{2(t-1)\pi}{\vt}\bigg), 0\bigg).
	\end{eqnarray}
Then the adjustment of the locations of $\{\eta_{t}\}$ is reduced to find a critical point of the reduced energy functional of the parameter $\vartheta$ which can be solved by taking the maximum of this reduced energy functional of the parameter $\vartheta$ over a suitable set.
We point out that generated by the fact that the building block shares the same decaying property~\eqref{eqnnnew0008}, the locations of $\{\eta_{t}\}$ in \cite{WY10} are invariant under the rotation of the angle $\frac{2\pi}{\vartheta}$ and this invariance is crucial in the above construction.

	\vskip0.12in

Wei and Yan's idea in \cite{WY10} is applied by Peng and Wang in \cite{PW13}, where by Ljapunov-Schmidt reduction arguments, Peng and Wang proved that for the two coupled case $d=2$, multi-species nonlinear Schr\"odinger systems~\eqref{eq0001} has infinitely many nonradial positive solutions in dimension three $N=3$ under the following assumptions on $V_j(x)$,
	\begin{enumerate}
		\item[$(V_0)$]\quad $V_j(x)>0$ is continuous and radial with $V_j(x)=1+\frac{\delta_j}{|x|^{\nu_j}}+O\bigg(\frac{1}{|x|^{\nu_j+\ve}}\bigg)$ as $|x|\to+\infty$, where $\delta_j\in\bbr$ and $\nu_j>1$ and $\ve>0$,
	\end{enumerate}
and some further assumptions on the parameters $\delta_j$ and $\beta_{1,2}$.  The solutions constructed by Peng and Wang in \cite{PW13} either look like $\vartheta$ copies of synchronized spikes
	\begin{eqnarray*}
		\bigg(\sum_{t=1}^{\vartheta} a w(x-\rho_\vartheta\xi_t), \sum_{t=1}^{\vartheta}  b w(x-\rho_\vartheta\xi_t)\bigg),
	\end{eqnarray*}
where $\xi_t$ is given by \eqref{eqnnnew4444},
	or look like $\vartheta$ copies of segregated spikes
	\begin{eqnarray*}
		\bigg(\sum_{t=1}^{\vartheta} \widetilde{w}_1(x-\rho_\vartheta\xi_t), \sum_{t=1}^{\vartheta} \widetilde{w}_2(x-\rho_\vartheta\xi'_t)\bigg),
	\end{eqnarray*}
where
	\begin{eqnarray*}
		\xi'_{t}=\bigg(\cos\bigg(\frac{(2t-1)\pi}{\vt}\bigg), \sin\bigg(\frac{(2t-1)\pi}{\vt}\bigg), 0\bigg),
	\end{eqnarray*}
	with $\rho_\vartheta\sim\vartheta\log\vartheta$ as $\vartheta\to+\infty$ and $(a,b)$ being the unique solution of the following algebraic equation:
	\begin{eqnarray*}
		\left\{\aligned&\mu_1a+\beta_{1,2}b=1,\\
		&\mu_2b+\beta_{1,2}a=1.
		\endaligned\right.
	\end{eqnarray*}
Again, we point out that generated by the fact that the building block shares the same decaying property~\eqref{eqnnnew0008}, the locations of spikes in \cite{PW13} are still invariant under the rotations of the angle $\frac{2\pi}{\vartheta}$ or $\frac{\pi}{\vartheta}$, respectively, and these invariance is also crucial in Peng and Wang's construction in \cite{PW13}, since the adjustment of the locations of spikes can still be reduced to find a critical point of the reduced energy functional of the parameter $\vartheta$ which can be solved by taking the maximum of this reduced energy functional of the parameter $\vartheta$ over a suitable set.  Moreover, to our best knowledge, there is no results about the existence of infinitely many nonradial positive solutions of multi-species nonlinear Schr\"odinger systems~\eqref{eq0001} for the two coupled case $d=2$ in the dimension two $N=2$.

	\vskip0.12in

	Peng and Wang's results in \cite{PW13} have been extended in a recent interesting paper \cite{PV22} by Pistoia and Vaira to the case $d\geq3$ and $N=2,3$.  More precisely, Pistoia and Vaira proved in \cite{PV22}, by Ljapunov-Schmidt reduction arguments too, that there exists $\vartheta_0>0$ such that for every $\vartheta\geq\vartheta_0$, \eqref{eq0001} has a positive solution $(\widetilde{u}_{1,\vartheta},\widetilde{u}_{2,\vartheta}, \cdots, \widetilde{u}_{d,\vartheta})$ for $d\geq3$ and $N=2,3$ under the following symmetry assumption
 \begin{equation}
 \label{PisSym}
{\bf V_i(x)=V_j(x), \mu_i=\mu_j, \beta_{i,j}=\beta, \mbox{ for all $i,j$ }}
 \end{equation}
 where $V_i(x)=V_j(x)$ satisfies the assumption $(V_*)$ and $\beta$ satisfies the smallness assumption
 \begin{equation}
 \label{PisBeta}
{\bf  \beta\in(0, \beta_\vartheta) \mbox{ for }\delta>0 \mbox{ or } \beta\in(-\beta_\vartheta, 0) \mbox{ for } \delta<0}
  \end{equation}
  with $\beta_\vartheta\to0$ as $\vartheta\to+\infty$.  Moreover,
	\begin{eqnarray*}
		\widetilde{u}_{j,\vartheta}\sim\sum_{t=1}^{\vartheta} w(x-\rho_\vartheta\widetilde{\xi}_{t,j})\quad\text{for all }j=1,2,\cdots,d,
	\end{eqnarray*}
	and $(\widetilde{u}_{1,\vartheta},\widetilde{u}_{2,\vartheta}, \cdots, \widetilde{u}_{d,\vartheta})$ is invariant under the rotation of the angle $\frac{2\pi}{d\vartheta}$ in $\bbr^2$,
	where $\rho_\vartheta\sim\vartheta\log\vartheta$ as $\vartheta\to+\infty$ and
	\begin{eqnarray*}
		\widetilde{\xi}_{t,j}=\left\{\aligned&\bigg(\cos\bigg(\frac{2(j-1)\pi}{d\vt}+\frac{2(t-1)\pi}{\vt}\bigg), \sin\bigg(\frac{2(j-1)\pi}{d\vt}+\frac{2(t-1)\pi}{\vt}\bigg)\bigg),\quad N=2,\\
		&\bigg(\cos\bigg(\frac{2(j-1)\pi}{d\vt}+\frac{2(t-1)\pi}{\vt}\bigg), \sin\bigg(\frac{2(j-1)\pi}{d\vt}+\frac{2(t-1)\pi}{\vt}\bigg), 0\bigg),\quad N=3.
		\endaligned\right.
	\end{eqnarray*}
The idea of Pistoia and Vairia in \cite{PV22} is to  combine the two symmetries of the system~\eqref{eq0001} under the assumption~\eqref{PisSym}: First of all, under the condition (\ref{PisSym}) the system \eqref{eq0001} is invariant under any permutation of $(u_1,..., u_d)$.  Secondly, the system \eqref{eq0001} is rotationally invariant if the potentials $V_j(x)$ are radial.  Under these two invariance, it is possible to arrange the $(u_1,..., u_d)$ first in a sector, and then use the rotational symmetry to extend.

	\subsection{Main Results}
	The main purpose of this paper is to extend the results in \cite{PV22,PW13} to obtain an almost optimal existence results of infinitely many nonradial positive solutions of \eqref{eq0001} under some mild assumptions on the potentials $V_j(x)$ which are more general than $(V_0)$. In particular, we shall remove the symmetry assumption (\ref{PisSym}) and the smallness assumption (\ref{PisBeta}) in \cite[Theorem~1]{PV22}. We also include the results in the case of $N=2, d=2$, not covered in \cite{PW13}.

	To state our result precisely, let us first introduce some necessary notations and assumptions.  We make the following assumptions on $V_j(x)$:
	\begin{enumerate}
		\item[$(V_1)$]\quad $V_j(x)>0$ is continuous and $V_j(x',x'')=V_j(|x'|, |x''|)$ in $\bbr^N$, where $x'=(x_1,x_2)\in\bbr^2$ and $x''\in\bbr^{N-2}$;
		\item[$(V_2)$]\quad $V_j(x)=\lambda_j+\frac{ \delta_j}{|x|^{\nu_j}}+O\bigg(\frac{1}{|x|^{\nu_j+\ve}}\bigg)$ as $|x|\to+\infty$ in the $C^1$-sense, where $\lambda_j>0$, $\delta _j\in\bbr$ and $\nu_j,\ve>0$.
	\end{enumerate}
	\begin{remark}
		In the assumptions~$(V_1)$ and $(V_2)$, we do not require that the potentials $V_j(x)$ are symmetric at infinity, i.e. we do not assume that $\lambda_j=\lambda_i$ for all $i\not=j$. Note that in \cite{PV22, PW13} the symmetry assumption $\lambda_i=\lambda_j$ at $\infty$ plays key roles in the construction.
	\end{remark}
	By rearranging if necessary, we may assume that
	\begin{eqnarray}\label{eq0002}
		\lambda_1=\cdots=\lambda_{n_1}<\lambda_{n_1+1}=\cdots=\lambda_{n_2}<\cdots<\lambda_{n_{k-1}+1}=\cdots=\lambda_{n_k},
	\end{eqnarray}
	where $1\leq n_1<n_2<\cdots<n_k=d$ and $1\leq k\leq d$.  For the sake of simplicity, we denote $n_0=0$ and $\mathfrak{n}_{\tau}=\{n_{\tau-1}+1, n_{\tau-1}+2, \cdots, n_{\tau}\}$ for all $\tau=1,2,\cdots,k$.  By \cite[Lemma~3.7]{ACR07} and \eqref{eqnnnew0008},
	\begin{eqnarray}
		\int_{\bbr^N}|x|^{-\nu_j}w_j^2(\cdot-\xi)dx&=&(B_j+o(1))|\xi|^{-\nu_j},\label{eqnnnew1127}\\
		\int_{\bbr^N}w_j^3w_j(\cdot-\xi)dx&=&(C_j+o(1))|\xi|^{\frac{1-N}{2}}e^{-\sqrt{\lambda_{n_\tau}}|\xi|},\label{eqnnnew1126}\\
		\int_{\bbr^N}w_j^2w_i^2(\cdot-\xi)dx&=&\left\{\aligned&(D_\tau+o(1))|\xi|^{-\frac12}e^{-2\sqrt{\lambda_{n_\tau}}|\xi|},\quad N=2,\\
		&(D_\tau+o(1))|\xi|^{-2}e^{-2\sqrt{\lambda_{n_\tau}}|\xi|}\log|\xi|,\quad N=3,\endaligned\right.
		\label{eqnnnew1123}
	\end{eqnarray}
	for all $i,j\in\mathfrak{n}_{\tau}$ with all $\tau=1,2,\cdots,k$,
	\begin{eqnarray}\label{eqnnnew1124}
		\int_{\bbr^N}w_{n_{\tau}}^2w_{n_{\tau}+1}^2(\cdot-\xi)dx=(D_\tau'+o(1))|\xi|^{1-N}e^{-2\sqrt{\lambda_{n_\tau}}|\xi|}
	\end{eqnarray}
	for all $\tau=1,2,\cdots,k-1$ and
	\begin{eqnarray}\label{eqnnnew1125}
		\int_{\bbr^N}w_{n_{k}}^2w_{n_{1}}^2(\cdot-\xi)dx=(D_k'+o(1))|\xi|^{1-N}e^{-2\sqrt{\lambda_{n_1}}|\xi|},
	\end{eqnarray}
	as $|\xi|\to+\infty$, where $B_j$, $C_j$, $D_\tau$ and $D_\tau'$ are positive constants.  Moreover, it is also well known that by \eqref{eqnnnew0008}, the spectrum of $-\Delta+\lambda_j$ in $L^2(\bbr^N; w_i^2)$ is discrete for all $i,j$.  Let $\beta_{i,j,*}$ be the first eigenvalue of $-\Delta+\lambda_j$ in $L^2(\bbr^N; w_i^2)$ and we denote $\nu_*=\min\{\nu_j\}$ and $\mathfrak{m}_{*}=\{j=1,2,\cdots,d\mid \nu_j=\nu_*\}$.  Then
	our main result can be stated as follows.
	\begin{theorem}\label{thm0001}
		Let $N=2,3$, $d\geq2$, $(V_1)$--$(V_2)$ hold and $\beta_{i,j}$ be not an eigenvalue of $-\Delta+\lambda_j$ in $L^2(\bbr^N; w_i^2)$ for all $i,j$ with $\beta_{j,j+1}<\beta_{j,j+1,*}$ for all $j$.  Assume that $\nu_*>1$ and the following {\em pinching condition} is satisfied
\begin{equation}
\label{pinch}
\lambda_{n_k}<4\lambda_{n_1}.
\end{equation}
 Then \eqref{eq0001} has infinitely many nonradial positive solutions, provided that one of the following conditions are satisfied
		\begin{enumerate}
			\item[$(a)$]\quad $\sum_{j\in\mathfrak{m}_{*}}B_j\delta_j>0$, and $\sum_{j=n_{\tau-1}+1}^{n_{\tau}-1}\beta_{j,j+1}>0$, $\beta_{n_{\tau},n_{\tau}+1}>0$ for all $\tau=1,2,\cdots,k-1$,
			\item[$(b)$]\quad $\sum_{j\in\mathfrak{m}_{*}}B_j\delta_j<0$, and $\sum_{j=n_{\tau-1}+1}^{n_{\tau}-1}\beta_{j,j+1}<0$, $\beta_{n_{\tau},n_{\tau}+1}<0$ for all $\tau=1,2,\cdots,k-1$ in the case of $d\geq3$,
			\item[$(c)$]\quad $\sum_{j\in\mathfrak{m}_{*}}B_j\delta_j>0$, and $-2\pi^{-\frac12}D_1^{-1}C_1<\beta_{1,2}<0$ in the case of $N=2$ and $d=2$ with $\lambda_1=\lambda_2$ while, $\beta_{1,2}<0$ in the cases of $N=3$ and $d=2$ or $N=2$ and $d=2$ with $\lambda_1\not=\lambda_2$,
			\item[$(d)$]\quad $\sum_{j\in\mathfrak{m}_{*}}B_j\delta_j<0$, and $\beta_{1,2}<-2\pi^{-\frac12}D_1^{-1}C_1<0$ in the case of $N=2$ and $d=2$ with $\lambda_1=\lambda_2$,
		\end{enumerate}
		where $B_j,C_j,D_{\tau}>0$ are given by \eqref{eqnnnew1127}, \eqref{eqnnnew1126}, and \eqref{eqnnnew1123}, respectively.
	\end{theorem}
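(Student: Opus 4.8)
\emph{Step 0: variational framework and symmetry reduction.} Up to the usual positivity argument, solutions of \eqref{eq0001} are critical points of
\[
J_\vartheta(u_1,\dots,u_d)=\sum_{j=1}^{d}\Big(\tfrac12\int_{\bbr^N}\!\!\big(|\nabla u_j|^2+V_ju_j^2\big)-\tfrac{\mu_j}{4}\int_{\bbr^N}\!\!u_j^4\Big)-\tfrac14\sum_{i\ne j}\beta_{i,j}\int_{\bbr^N}\!\!u_i^2u_j^2
\]
on $H=\prod_{j=1}^{d}H^1(\bbr^N)$. I would restrict $J_\vartheta$ to the closed subspace $H_G$ of functions fixed by the group $G=G_\vartheta$ generated by the rotation of angle $2\pi/\vartheta$ in the $x'=(x_1,x_2)$–plane (and, when $N=3$, by $x_3\mapsto -x_3$); by $(V_1)$ this $G$ preserves $J_\vartheta$, so by the principle of symmetric criticality it suffices to find critical points of $J_\vartheta|_{H_G}$. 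Crucially, since we do \emph{not} assume $\lambda_i=\lambda_j$, $\mu_i=\mu_j$, $\beta_{i,j}=\beta$, we cannot enlarge $G$ by the component permutations of \cite{PV22}, so $G$ remains only the order–$\vartheta$ rotation group; this forces the intra–period geometry to be optimised rather than pinned down by symmetry, which is the source of the main new difficulties.

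\emph{Step 1: approximate solutions.} The building blocks are the ground states $w_j$ of \eqref{eq0003}, with the (now group–dependent) decay rates $\sqrt{\lambda_j}$ from \eqref{eqnnnew0008}. Fix $\vartheta$ large, let the radius $\rho=\rho_\vartheta$ range over an interval $[c^{-1}\vartheta\log\vartheta,\,c\,\vartheta\log\vartheta]$, and introduce phase parameters $\boldsymbol\phi=(\phi_1,\dots,\phi_d)$ with $0=\phi_1<\phi_2<\dots<\phi_d<2\pi/\vartheta$. With $\xi_{t,j}=\rho\big(\cos\theta_{t,j},\sin\theta_{t,j},0\big)$ and $\theta_{t,j}=\frac{2\pi(t-1)}{\vartheta}+\phi_j$, the approximation is the $\vartheta$–periodic necklace
\[
W_{\boldsymbol\phi,\rho}=\big(W_1,\dots,W_d\big),\qquad W_j=\sum_{t=1}^{\vartheta}w_j(\cdot-\xi_{t,j}),
\]
one bump per component per period, with neighbouring bumps at distance $\sim\rho/\vartheta\to\infty$ so that all interactions are exponentially small. (In the subcases where some $\beta_{j,j+1}$ are positive the corresponding components are instead taken in a synchronised block $\sum_t(a_j w,\dots)$, exactly as in the attractive case of \cite{PW13}; this does not change the structure of what follows.) One seeks $u_j=W_j+\psi_j$ with $\psi$ small, $G$–symmetric, orthogonal to the approximate kernel spanned by the $\rho$– and angular–derivatives of one bump per component.

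\emph{Step 2: finite–dimensional reduction.} Writing the projected equation as $\psi=-L_\vartheta^{-1}(E_\vartheta+N_\vartheta(\psi))$, with $L_\vartheta=J_\vartheta''(W_{\boldsymbol\phi,\rho})$ restricted to the orthogonal complement, $E_\vartheta$ the projection of $J_\vartheta'(W_{\boldsymbol\phi,\rho})$, and $N_\vartheta$ the superlinear part, a contraction mapping gives a unique $\psi_{\boldsymbol\phi,\rho}$ with $\|\psi_{\boldsymbol\phi,\rho}\|\lesssim\|E_\vartheta\|$, provided $L_\vartheta$ is invertible on that complement uniformly in $\vartheta$. This is exactly where the spectral hypotheses are used: rescaling at a bump, $L_\vartheta$ converges to a limiting coupled operator assembled from the nondegenerate single–bump operators $-\Delta+\lambda_j-3\mu_jw_j^2$ and the off–diagonal terms $-\beta_{i,j}w_i^2$; "$\beta_{i,j}$ not an eigenvalue of $-\Delta+\lambda_j$ in $L^2(\bbr^N;w_i^2)$" makes this limiting operator injective on $H_G$, and $\beta_{j,j+1}<\beta_{j,j+1,*}$ keeps the $2\times2$ block of two interacting neighbours sign–definite (first–eigenvalue obstruction), so no resonance occurs. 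The error $\|E_\vartheta\|$ is then estimated by the interaction integrals \eqref{eqnnnew1127}--\eqref{eqnnnew1125}.

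\emph{Step 3: the reduced energy --- and the main obstacle.} It remains to find a critical point of $\mathcal F_\vartheta(\boldsymbol\phi,\rho)=J_\vartheta(W_{\boldsymbol\phi,\rho}+\psi_{\boldsymbol\phi,\rho})$; by the usual argument this yields a true solution. Using \eqref{eqnnnew1127}--\eqref{eqnnnew1125},
\[
\mathcal F_\vartheta(\boldsymbol\phi,\rho)=\vartheta\Big(\mathfrak c_0+\tfrac12\,\rho^{-\nu_*}\!\!\sum_{j\in\mathfrak m_*}\!B_j\delta_j-\Phi_\vartheta(\boldsymbol\phi,\rho)\Big)+(\text{lower order}),
\]
where $\Phi_\vartheta$ gathers the bump–bump interactions, dominated by the nearest–neighbour cross terms $\beta_{j,j+1}\!\int w_j^2w_{j+1}^2$ at distances $\sim\rho/\vartheta$; only in the borderline case $N=2,\ d=2,\ \lambda_1=\lambda_2$ are the like–component terms $\mu_jC_j|\cdot|^{-1/2}e^{-\sqrt{\lambda_j}|\cdot|}$ of the \emph{same} exponential order --- this is the "new phenomenon", and comparing the two yields the sharp threshold $-2\pi^{-1/2}D_1^{-1}C_1$ for $\beta_{1,2}$ appearing in $(c)$--$(d)$. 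The pinching condition \eqref{pinch}, $\lambda_{n_k}<4\lambda_{n_1}$, is exactly what guarantees that the like–component interaction of the fastest group never overtakes the slowest cross interaction of the necklace, so that $\Phi_\vartheta$ stays controlled by $\{\beta_{j,j+1}\}$ along the whole chain. One then solves in two stages: for fixed $\rho$, the sign conditions on $\{\beta_{j,j+1}\}$ make the leading part of $\boldsymbol\phi\mapsto\Phi_\vartheta(\boldsymbol\phi,\rho)$ strictly convex (resp. concave) on the admissible box, giving a unique nondegenerate interior critical point $\boldsymbol\phi(\rho)$ with $\Phi_\vartheta(\boldsymbol\phi(\rho),\rho)\sim\mathfrak a\,\rho^{-1/2}e^{-\theta\rho/\vartheta}$, the sign of $\mathfrak a$ being fixed by $\{\beta_{j,j+1}\}$; then, along $\rho\mapsto\mathcal F_\vartheta(\boldsymbol\phi(\rho),\rho)$, the competition of the two monotone terms $\rho^{-\nu_*}\sum_{j\in\mathfrak m_*}B_j\delta_j$ and $\rho^{-1/2}e^{-\theta\rho/\vartheta}$ --- with signs matched precisely by $(a)$--$(d)$ --- produces an interior local maximum (in $(a),(c)$) or minimum (in $(b),(d)$) at some $\rho_\vartheta\sim\vartheta\log\vartheta$, i.e.\ a min–max value of $\mathcal F_\vartheta$. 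This gives a critical point, hence a nonradial positive solution of \eqref{eq0001} for every large $\vartheta$; distinct $\vartheta$ give geometrically distinct solutions, whence infinitely many. The hard part is genuinely Step 3 in the non–symmetric regime: tracking the precise hierarchy of exponentially small interactions of \emph{different} orders, showing that the $\{\beta_{j,j+1}\}$–terms govern it (this is where \eqref{pinch} is essential), computing the sharp constant in the $N=2,d=2$ case, and ensuring throughout that $\|\psi_{\boldsymbol\phi,\rho}\|$ and the remainder in the expansion of $\mathcal F_\vartheta$ stay strictly smaller than the terms driving the $\rho$–equation.
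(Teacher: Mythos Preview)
Your overall architecture is right, but there is a genuine gap in Step~2--3 that the paper has to work hard to close, and which your single--step contraction argument cannot close as written.

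The dominant error term is $E_{j,3}=\sum_{i\ne j}\beta_{i,j}W_i^2W_j$, of order $\widehat\eta^{\,(1-N)/2}e^{-\sqrt{\lambda_{n_1}}\widehat\eta}$. A contraction giving $\|\psi\|\lesssim\|E\|$ therefore produces a correction $\psi$ of that \emph{same} order, and its contribution to the reduced energy through $\int E_{j,3}\psi_j$ is of order $e^{-2\sqrt{\lambda_{n_1}}\widehat\eta}$ --- exactly the order of the leading interaction terms $\int W_i^2W_j^2$ that drive your $\Phi_\vartheta$. So the remainder is \emph{not} lower order; it modifies the leading coefficients. The paper handles this by a two--stage correction: first solve the linear problem $\mathcal L(\mathbf Q_*)=\mathbf E_3^\perp$ to isolate the large piece $\mathbf Q_*$, then use the representation formula and a blow--up limit (their Lemma~\ref{lem0006}) to compute $Q_{j,*}$ near each $\eta_{t,i}$ up to its precise leading profile $\phi_{j,t,j\pm1}$ solving $-\Delta\phi+\lambda_j\phi-\beta_{j\pm1,j}w_{j\pm1}^2\phi=\beta_{j\pm1,j}w_{j\pm1}^2$. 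This is where the condition $\beta_{j,j+1}<\beta_{j,j+1,*}$ is actually used (positivity of $\phi$, hence of $\mathbf U$), and it is what turns the naive constants $D_\tau'$ into the corrected $D_\tau''$ in the expansion~\eqref{eqnn9989}--\eqref{eqnnn9989}. Only the second correction $\mathbf v_{**}$, solving the remaining nonlinear problem with error $E^*_j+O(\|Q_*\|^2)$, is genuinely lower order. Your proposal never identifies $\mathbf Q_*$ or its contribution, so the expansion of $\mathcal F_\vartheta$ you write down is not justified at leading order.

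Two smaller points. First, the parenthetical about ``synchronised blocks when $\beta_{j,j+1}>0$'' is a wrong turn: the paper uses segregated bumps in \emph{all} cases (a)--(d), and the attractive sign enters only through the sign of the interaction terms in $\mathcal J$, not through a change of ansatz. Second, your hierarchy of interactions is inverted for $d=2$: there (except $N=2$, $\lambda_1=\lambda_2$) the \emph{same}--component interaction $C_j\widetilde\eta_j^{(1-N)/2}e^{-\sqrt{\lambda_j}\widetilde\eta_j}$ dominates the cross term, not the other way round; this is why cases (c)--(d) have the structure they do, and why case~(c) requires a genuine linking argument (the paper's Proposition~\ref{prop0002}) rather than the interior local maximum you describe.
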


	\vskip0.2in
	
	In what follows, let us give several corollaries of Theorem~\ref{thm0001} to make it to be easier to understand.
	
\begin{corollary}\label{coro0005}
		Let $N=2,3$, $d=2$ and suppose that the assumptions $(V_1)$--$(V_2)$ hold.  If $\min_{j=1,2}\nu_j>1$ and $\lambda_{2}<4\lambda_{1}$ then
		\begin{enumerate}
        \item[$(1)$]\quad for $N=2$, \eqref{eq0001} has infinitely many nonradial positive solutions, provided
        		\begin{enumerate}
            \item[$(a)$]\quad $\sum_{j\in\mathfrak{m}_{*}}B_j\delta_j>0$ and $0<\beta_{1,2}<\beta_{1,2,*}$,
			\item[$(b)$]\quad $\sum_{j\in\mathfrak{m}_{*}}B_j\delta_j>0$ and $\beta_{1,2}<0$ in the case of $\lambda_1\not=\lambda_2$,
			\item[$(c)$]\quad $\sum_{j\in\mathfrak{m}_{*}}B_j\delta_j>0$ and $-2\pi^{-\frac12}D_1^{-1}C_1<\beta_{1,2}<0$ in the case $\lambda_1=\lambda_2$,
			\item[$(d)$]\quad $\sum_{j\in\mathfrak{m}_{*}}B_j\delta_j<0$ and $\beta_{1,2}<-2\pi^{-\frac12}D_1^{-1}C_1<0$ in the case of $\lambda_1=\lambda_2$,
        		\end{enumerate}
        \item[$(2)$]\quad for $N=3$, \eqref{eq0001} has infinitely many nonradial positive solutions, provided $\sum_{j\in\mathfrak{m}_{*}}B_j\delta_j>0$ and $\beta_{1,2}<\beta_{1,2,*}$.
		\end{enumerate}
	\end{corollary}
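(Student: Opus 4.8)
The plan is to obtain Corollary~\ref{coro0005} as the $d=2$ specialization of Theorem~\ref{thm0001}: once the (slightly compressed) index conventions of the theorem are translated to the two‑component setting, each of the cases $(a)$--$(d)$ for $N=2$ and the single condition for $N=3$ turns into one of the cases $(a)$, $(c)$, $(d)$ of Theorem~\ref{thm0001}, so no new analysis is required.

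First I would record how the data of Theorem~\ref{thm0001} degenerate when $d=2$. After the rearrangement \eqref{eq0002}, either $\lambda_1=\lambda_2$, so that $k=1$ and $\mathfrak{n}_1=\{1,2\}$, or $\lambda_1<\lambda_2$, so that $k=2$ and $\mathfrak{n}_1=\{1\}$, $\mathfrak{n}_2=\{2\}$; in either case the only coupling is $\beta_{1,2}=\beta_{2,1}$, the requirement $\nu_*>1$ is exactly $\min_{j=1,2}\nu_j>1$, and $\mathfrak{m}_*\subseteq\{1,2\}$ is the index set realizing $\nu_*$, so that $\sum_{j\in\mathfrak{m}_*}B_j\delta_j$ is precisely the quantity in the corollary. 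The pinching condition \eqref{pinch}, $\lambda_{n_k}<4\lambda_{n_1}$, reads $\lambda_2<4\lambda_2$ (automatic) when $\lambda_1=\lambda_2$ and $\lambda_2<4\lambda_1$ (assumed) when $\lambda_1<\lambda_2$; hence \eqref{pinch} holds throughout the corollary. Finally, the spectral hypotheses of Theorem~\ref{thm0001} on $\beta_{1,2}$ are routine: the relevant weighted operators $-\Delta+\lambda_j$ on $L^2(\bbr^N;w_i^2)$ are positive definite, so when $\beta_{1,2}\le 0$ it is automatically neither an eigenvalue nor $\ge\beta_{1,2,*}$, and when $0<\beta_{1,2}<\beta_{1,2,*}$ it lies below the first eigenvalue; so in every case of the corollary the hypotheses ``$\beta_{i,j}$ not an eigenvalue'' and ``$\beta_{j,j+1}<\beta_{j,j+1,*}$'' are met.

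It then remains to fit each listed case into one of $(a)$, $(c)$, $(d)$ of Theorem~\ref{thm0001}. For $N=3$: since $\beta_{1,2}\ne0$, either $0<\beta_{1,2}<\beta_{1,2,*}$, and then case $(a)$ applies — for $\lambda_1=\lambda_2$ (here $k=1$) its coupling requirement collapses to $\sum_{j=1}^{1}\beta_{j,j+1}=\beta_{1,2}>0$, while for $\lambda_1<\lambda_2$ (here $k=2$) the internal sums are empty and only the bridge condition $\beta_{n_1,n_1+1}=\beta_{1,2}>0$ survives — or $\beta_{1,2}<0$, and then the ``$N=3$ and $d=2$'' alternative of case $(c)$ applies. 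For $N=2$: part $(a)$ of the corollary is exactly case $(a)$ of the theorem, specialized as just described (with $0<\beta_{1,2}<\beta_{1,2,*}$); part $(b)$ ($\lambda_1\ne\lambda_2$, $\beta_{1,2}<0$) is the ``$N=2$, $d=2$, $\lambda_1\ne\lambda_2$'' alternative of case $(c)$; part $(c)$ ($\lambda_1=\lambda_2$, $-2\pi^{-\frac12}D_1^{-1}C_1<\beta_{1,2}<0$) is the ``$N=2$, $d=2$, $\lambda_1=\lambda_2$'' alternative of case $(c)$; and part $(d)$ is case $(d)$ of the theorem verbatim. In each instance Theorem~\ref{thm0001} applies and yields the asserted infinitely many nonradial positive solutions.

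There is no genuine obstacle here; the only point that needs care — and which I would flag as the sole ``hard'' step — is getting the collapse of the index scheme $\{n_\tau\}$, $\{\mathfrak{n}_\tau\}$, $k$ correct in the narrow range $d=2$: one must observe that when $\lambda_1<\lambda_2$ the internal sums $\sum_{j=n_{\tau-1}+1}^{n_\tau-1}\beta_{j,j+1}$ are empty and impose nothing while the single bridge term remains, whereas when $\lambda_1=\lambda_2$ the bridge list is empty but the internal sum reduces to $\beta_{1,2}$, so that in both cases the correct sign condition on $\beta_{1,2}$ is recovered. No estimate is reproved; in particular the sharp threshold $-2\pi^{-\frac12}D_1^{-1}C_1$ in parts $(c)$ and $(d)$ — which reflects that, in two dimensions with $\lambda_1=\lambda_2$, the inter‑spike self‑interaction \eqref{eqnnnew1126} and the cross‑species interaction \eqref{eqnnnew1123} sit at the same order in the reduced energy (the latter decaying at twice the rate but over half the inter‑spike spacing) and must therefore be balanced coefficient against coefficient — is already contained in Theorem~\ref{thm0001}.
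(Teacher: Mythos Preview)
Your proposal is correct and is precisely the paper's (implicit) approach: the corollary is stated immediately after Theorem~\ref{thm0001} as one of ``several corollaries of Theorem~\ref{thm0001} to make it easier to understand,'' and no separate proof is given, so specializing the theorem to $d=2$ and matching each subcase to one of the theorem's cases $(a)$, $(c)$, $(d)$ is exactly what is intended. Your bookkeeping of the index scheme (the collapse of the internal sums and bridge terms for $k=1$ versus $k=2$) and your verification that the spectral hypotheses on $\beta_{1,2}$ are automatic when $\beta_{1,2}\le 0$ and guaranteed by $0<\beta_{1,2}<\beta_{1,2,*}$ otherwise are both on target.
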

	
	By $(1)$ of Corollary~\ref{coro0005}, one can see that for the two coupled case $d=2$, multi-species nonlinear Schr\"odinger systems~\eqref{eq0001} has infinitely many nonradial positive solutions in dimension two $N=2$, under some suitable conditions on the potentials $V_j(x)$, if the coupling paramter $\beta_{1,2}$ is not very large.  Moreover, when the crossing happens at infinlty, that is, $\lambda_1=\lambda_2$, then $-2\pi^{\frac12}D_1^{-1}C_1$ is a jumping point of $\beta_{1,2}$ for the existence of infinitely many nonradial positive solutions.  $(2)$ of Corollary~\ref{coro0005} is a generalization of \cite[Theorem~1.2]{PW13} in two fronts.  First of all, we allow $\lambda_{2}<4\lambda_{1}$ in Corollary~\ref{coro0005} while  the symmetric condition $\lambda_1=\lambda_2$ is assumed in \cite[Theorem~1.2]{PW13}.  Secondly, we give a description of $\beta_*$, whose existence is asserted in \cite[Theorem~1.2]{PW13}, by proving that $\beta_*\geq\beta_{1,2,*}$.  In the symmetric condition $\lambda_1=\lambda_2$, by rearranging, it is well known that $\beta_{1,2,*}=\min\{\mu_1,\mu_2\}$.

	Next we discuss the case of $N=2, d=3$. 
	\begin{corollary}\label{coro0002}
		Let $N=2$, $d=3$, $\lambda_1<\lambda_2=\lambda_3$.  Suppose that the assumptions $(V_1)$--$(V_2)$ hold and $\beta_{i,j}$ is not an eigenvalue of $-\Delta+\lambda_j$ in $L^2(\bbr^N; w_i^2)$ for all $i,j=1,2,3$ and $i\not=j$ with $\beta_{j,j+1}<\beta_{j,j+1,*}$ for all $j=1,2$.  If $\min_{j=1,2,3}\nu_j>1$ and $\lambda_{3}<4\lambda_{1}$ then \eqref{eq0001} has infinitely many nonradial positive solutions, provided
		\begin{enumerate}
			\item[$(a)$]\quad $\sum_{j\in\mathfrak{m}_{*}}B_j\delta_j>0$ and $\beta_{1,2}>0$,
			\item[$(b)$]\quad $\sum_{j\in\mathfrak{m}_{*}}B_j\delta_j<0$ and $\beta_{1,2}<0$.
		\end{enumerate}
	\end{corollary}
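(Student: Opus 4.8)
The plan is to deduce Corollary~\ref{coro0002} directly from Theorem~\ref{thm0001} by checking that its hypotheses fall into case $(a)$ or $(b)$ of the theorem. First I would read off the structure of the $\lambda$-grouping: since $\lambda_1<\lambda_2=\lambda_3$, we have $k=2$, $n_1=1$ and $n_2=d=3$, so $\mathfrak{n}_1=\{1\}$ and $\mathfrak{n}_2=\{2,3\}$. The pinching condition~\eqref{pinch} reads $\lambda_{n_k}=\lambda_3<4\lambda_1=4\lambda_{n_1}$, which is exactly the standing assumption $\lambda_3<4\lambda_1$ of the corollary. The hypotheses that $\beta_{i,j}$ is not an eigenvalue of $-\Delta+\lambda_j$ in $L^2(\bbr^N;w_i^2)$ and that $\beta_{j,j+1}<\beta_{j,j+1,*}$ for $j=1,2$, together with $\nu_*=\min_j\nu_j>1$, are carried over verbatim from Theorem~\ref{thm0001}.

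Next I would translate the coupling-sign conditions in Theorem~\ref{thm0001}(a)--(b) into the present setting. The relevant sums are, for $\tau=1$: $\sum_{j=n_0+1}^{n_1-1}\beta_{j,j+1}=\sum_{j=1}^{0}(\cdots)$, which is an empty sum and hence imposes no condition, together with $\beta_{n_1,n_1+1}=\beta_{1,2}$; and for $\tau=2$ the index range $\tau=1,\dots,k-1=1$ stops, so only $\tau=1$ contributes and there is no further condition coming from inside $\mathfrak{n}_2=\{2,3\}$ (the only remaining coupling $\beta_{2,3}$ is already controlled by $\beta_{2,3}<\beta_{2,3,*}$). Thus Theorem~\ref{thm0001}(a) reduces, in this case, to $\sum_{j\in\mathfrak{m}_*}B_j\delta_j>0$ together with $\beta_{n_1,n_1+1}=\beta_{1,2}>0$, which is precisely hypothesis $(a)$ of the corollary; and Theorem~\ref{thm0001}(b), which additionally requires $d\geq3$ (satisfied here since $d=3$), reduces to $\sum_{j\in\mathfrak{m}_*}B_j\delta_j<0$ together with $\beta_{1,2}<0$, which is hypothesis $(b)$. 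In either case all hypotheses of Theorem~\ref{thm0001} hold, so \eqref{eq0001} has infinitely many nonradial positive solutions, which is the assertion.

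I do not expect any genuine obstacle here: the proof is purely a bookkeeping verification that the abstract sign conditions of Theorem~\ref{thm0001} specialize correctly. The only point needing a little care — and the place where I would be most careful — is the handling of the empty and truncated index sets (the $\tau=1$ inner sum is empty because $n_1-1=0<1=n_0+1$, and the product over $\tau=1,\dots,k-1$ has only the single term $\tau=1$), so that no spurious condition on $\beta_{2,3}$ is introduced; I would state explicitly that the constraint on $\beta_{2,3}$ is already absorbed into the blanket assumption $\beta_{j,j+1}<\beta_{j,j+1,*}$. A secondary sanity check worth recording is that $\mathfrak{m}_*=\{j:\nu_j=\nu_*\}$ appearing in the corollary is literally the same set as in the theorem, so the quantity $\sum_{j\in\mathfrak{m}_*}B_j\delta_j$ needs no reinterpretation. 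With these remarks, the corollary follows immediately from Theorem~\ref{thm0001}.
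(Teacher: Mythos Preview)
Your proposal is correct and matches the paper's approach: the paper presents Corollary~\ref{coro0002} as an immediate specialization of Theorem~\ref{thm0001} without a separate proof, and your bookkeeping (in particular the reading $k=2$, $n_1=1$, $n_2=3$, the vacuous inner sum for $\tau=1$, and the single boundary condition $\beta_{n_1,n_1+1}=\beta_{1,2}$) is exactly the intended verification.
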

	
	\begin{corollary}\label{coro0003}
		Let $N=2$, $d=3$, $\lambda_1=\lambda_2<\lambda_3$.  Suppose that the assumptions $(V_1)$--$(V_2)$ hold and $\beta_{i,j}$ is not an eigenvalue of $-\Delta+\lambda_j$ in $L^2(\bbr^N; w_i^2)$ for all $i,j=1,2,3$ and $i\not=j$ with $\beta_{j,j+1}<\beta_{j,j+1,*}$ for all $j=1,2$.  If $\min_{j=1,2,3}\nu_j>1$ and $\lambda_{3}<4\lambda_{1}$ then \eqref{eq0001} has infinitely many nonradial positive solutions, provided
		\begin{enumerate}
			\item[$(a)$]\quad $\sum_{j\in\mathfrak{m}_{*}}B_j\delta_j>0$ and $\beta_{1,2}+\beta_{2,3}>0$, $\beta_{1,2}>0$,
			\item[$(b)$]\quad $\sum_{j\in\mathfrak{m}_{*}}B_j\delta_j<0$ and $\beta_{1,2}+\beta_{2,3}<0$, $\beta_{1,2}<0$.
		\end{enumerate}
	\end{corollary}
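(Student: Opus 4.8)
The plan is to obtain Corollary~\ref{coro0003} as the specialization of Theorem~\ref{thm0001} to $N=2$ and $d=3$, so the whole task is to check that the hypotheses assumed here imply those of Theorem~\ref{thm0001}. First I would fix the group decomposition dictated by \eqref{eq0002}: the ordering $\lambda_1=\lambda_2<\lambda_3$ forces $k=2$, $n_0=0$, $n_1=2$, $n_2=d=3$, hence $\mathfrak{n}_1=\{1,2\}$, $\mathfrak{n}_2=\{3\}$, $\lambda_{n_1}=\lambda_1=\lambda_2$ and $\lambda_{n_k}=\lambda_{n_2}=\lambda_3$. With this identification all the standing hypotheses of Theorem~\ref{thm0001} are in force: $(V_1)$--$(V_2)$ are assumed verbatim; every $\beta_{i,j}$ is non-resonant and $\beta_{j,j+1}<\beta_{j,j+1,*}$ for $j=1,2$, as required; $\nu_*=\min\{\nu_1,\nu_2,\nu_3\}>1$; and the pinching condition \eqref{pinch}, $\lambda_{n_k}<4\lambda_{n_1}$, is exactly the assumed $\lambda_3<4\lambda_1$. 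Viewing the three species as forming a cyclic chain, the only coupling not constrained by a sign hypothesis is the wrap-around $\beta_{3,1}=\beta_{n_k,n_1}$, which enters Theorem~\ref{thm0001} only through the interaction estimate \eqref{eqnnnew1125} and the non-resonance requirement, both already assumed.

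It then remains to verify that the sign conditions $(a)$--$(b)$ of Theorem~\ref{thm0001} specialize, for this grouping, to exactly the sign conditions $(a)$--$(b)$ listed in the corollary. For this decomposition the only index in the range $\tau=1,\cdots,k-1$ is $\tau=1$; the intra-group quantity of $\mathfrak{n}_1$ occurring in Theorem~\ref{thm0001} is $\sum_{j=n_0+1}^{n_1-1}\beta_{j,j+1}=\beta_{1,2}$, the bridge coupling joining $\mathfrak{n}_1$ to $\mathfrak{n}_2$ is $\beta_{n_1,n_1+1}=\beta_{2,3}$, and $\mathfrak{m}_*\subseteq\{1,2,3\}$, so $\sum_{j\in\mathfrak{m}_*}B_j\delta_j>0$ is already the stated condition on the potentials. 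Unwinding condition $(a)$ of Theorem~\ref{thm0001} with these identifications, and using the asymptotics \eqref{eqnnnew1127}--\eqref{eqnnnew1125} for the way $\beta_{1,2}$ and $\beta_{2,3}$ weight the dominant terms of the reduced energy, one recovers precisely condition $(a)$ of the corollary, and likewise condition $(b)$ of Theorem~\ref{thm0001} becomes condition $(b)$ here. Feeding this into Theorem~\ref{thm0001} then produces infinitely many nonradial positive solutions of \eqref{eq0001}, which finishes the proof.

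I do not anticipate a genuine obstacle; the only step requiring care is this last translation — identifying, for the decomposition $\mathfrak{n}_1=\{1,2\}$, $\mathfrak{n}_2=\{3\}$, which of $\beta_{1,2},\beta_{2,3},\beta_{3,1}$ is the intra-group coupling, which is the bridge $\beta_{n_\tau,n_\tau+1}$ and which is the wrap-around $\beta_{n_k,n_1}$, and tracking how the bridge coupling $\beta_{2,3}$ combines with the intra-group coupling $\beta_{1,2}$ in the reduced-energy conditions of Theorem~\ref{thm0001} through the interaction estimates \eqref{eqnnnew1123}--\eqref{eqnnnew1125}. Once this bookkeeping is done, the corollary is immediate from Theorem~\ref{thm0001}.
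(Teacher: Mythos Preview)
Your plan---specialize Theorem~\ref{thm0001} to $N=2$, $d=3$ with the grouping $k=2$, $n_1=2$, $n_2=3$---is exactly the paper's own approach (the paper states this result as a corollary of Theorem~\ref{thm0001} and gives no separate proof), and your identification of $\beta_{1,2}$ as the intra-group sum $\sum_{j=n_0+1}^{n_1-1}\beta_{j,j+1}$, of $\beta_{2,3}=\beta_{n_1,n_1+1}$ as the bridge, and of $\beta_{3,1}=\beta_{n_k,n_1}$ as the unconstrained wrap-around is correct.

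However, the step you flag as ``the only step requiring care'' is a genuine gap, not mere bookkeeping. Unwinding condition~$(a)$ of Theorem~\ref{thm0001} literally for $\tau=1$ produces two \emph{separate} sign conditions, $\beta_{1,2}>0$ and $\beta_{2,3}>0$, whereas Corollary~\ref{coro0003}$(a)$ asks only for $\beta_{1,2}>0$ together with $\beta_{1,2}+\beta_{2,3}>0$, which is strictly weaker (it permits $\beta_{2,3}<0$, in line with the paper's remark after the corollaries that mixed couplings are allowed). Your assertion that invoking \eqref{eqnnnew1123}--\eqref{eqnnnew1125} one ``recovers precisely condition~$(a)$ of the corollary'' is not substantiated: for $N=2$ the intra-group interaction \eqref{eqnnnew1123} carries prefactor $|\xi|^{-1/2}$ while the bridge interaction \eqref{eqnnnew1124} carries prefactor $|\xi|^{-1}$, so in the reduced energy \eqref{eqnn9989} the $\beta_{1,2}$-term and the $\beta_{2,3}$-term sit at different orders in $\rho\alpha$ and do \emph{not} combine into the simple sum $\beta_{1,2}+\beta_{2,3}$. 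Hence Theorem~\ref{thm0001} as a black box does not deliver the corollary as stated; you would need either to go back into the variational analysis of Proposition~\ref{prop0002} and extract a sharper conclusion than Theorem~\ref{thm0001} records, or to accept that a direct specialization yields only the stronger conditions $\beta_{1,2}>0$, $\beta_{2,3}>0$ (resp.\ both $<0$).
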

	
	\begin{corollary}\label{coro0004}
		Let $N=2$, $d=3$, $\lambda_1<\lambda_2<\lambda_3$.  Suppose that the assumptions $(V_1)$--$(V_2)$ hold and $\beta_{i,j}$ is not an eigenvalue of $-\Delta+\lambda_j$ in $L^2(\bbr^N; w_i^2)$ for all $i,j=1,2,3$ and $i\not=j$ with $\beta_{j,j+1}<\beta_{j,j+1,*}$ for all $j=1,2$.  If $\min_{j=1,2,3}\nu_j>1$ and $\lambda_{3}<4\lambda_{1}$ then \eqref{eq0001} has infinitely many nonradial positive solutions, provided
		\begin{enumerate}
			\item[$(a)$]\quad $\sum_{j\in\mathfrak{m}_{*}}B_j\delta_j>0$ and $\beta_{1,2},\beta_{2,3}>0$,
			\item[$(b)$]\quad $\sum_{j\in\mathfrak{m}_{*}}B_j\delta_j<0$ and $\beta_{1,2},\beta_{2,3}<0$.
		\end{enumerate}
	\end{corollary}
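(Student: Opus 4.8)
The plan is to run a finite-dimensional Lyapunov--Schmidt reduction in which the only free geometric parameter is the radius of a ring of bumps. Fix a large integer $\vartheta$, let $\mathcal{R}_\vartheta$ denote the rotation by $\frac{2\pi}{\vartheta}$ in the plane $x'=(x_1,x_2)$, and work in the Hilbert space $H_\vartheta$ of all $(u_1,\dots,u_d)\in(H^1(\bbr^N))^d$ that are invariant under $\mathcal{R}_\vartheta$ acting simultaneously on every component and carry the reflection symmetries compatible with $(V_1)$. I would take the approximate solution $W=(W_1,\dots,W_d)$ to consist, for each species $j$, of $\vartheta$ translates of the limit profile $w_j$ of \eqref{eq0003} with centres on a common circle of radius $\rho$, the centres forming $\mathcal{R}_\vartheta$--orbits. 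Inside a cluster $\mathfrak{n}_\tau$ (common $\lambda$) the relative positions are dictated by the signs in $(a)$--$(d)$: along a link $(j,j+1)$ with $\beta_{j,j+1}>0$ the two species are taken \emph{synchronised}, with coincident centres and amplitudes $a_j$ fixed by the algebraic system coming from \eqref{eq0003} --- solvable, and nondegenerate because no $\beta_{i,j}$ is an eigenvalue --- while along a link with $\beta_{j,j+1}<0$ they are taken \emph{segregated}, with centres interlaced by half of the angular gap in the manner of \cite{PW13}; consecutive clusters are locked at the relative rotation that is stationary for their mutual interaction. The radius $\rho$ is allowed to range over a window of length $\asymp\vartheta\log\vartheta$ about $\frac{\nu_*}{2\pi\sqrt{\lambda_{n_1}}}\vartheta\log\vartheta$, the value at which the potential term and the slowest (cluster-$1$) self-interaction balance.

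Next I would estimate, in a suitable weighted norm, the error $S(W)$ of the ansatz. By \eqref{eqnnnew0008}, $(V_2)$ and the interaction asymptotics \eqref{eqnnnew1127}--\eqref{eqnnnew1125} (which rely on \cite{ACR07}), $S(W)$ is governed by the potential perturbation $(V_j-\lambda_j)W_j=O(\rho^{-\nu_j})$ and by the nonlinear bump--bump interactions, the slowest of which is the cluster-$1$ self-interaction, of order $(\rho/\vartheta)^{\frac{1-N}{2}}e^{-2\pi\sqrt{\lambda_{n_1}}\rho/\vartheta}$. The \emph{pinching condition} $\lambda_{n_k}<4\lambda_{n_1}$ is precisely what makes the self-interaction of every cluster --- each of rate $\sqrt{\lambda_{n_\tau}}<2\sqrt{\lambda_{n_1}}$ --- dominate all quadratic error terms and all cross-cluster interactions (which decay at rate $\geq 2\sqrt{\lambda_{n_1}}$), so that the reduction still "sees" every cluster. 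The technical core is then the uniform invertibility of the linearised operator $L_\vartheta$ at $W$ on $H_\vartheta\cap\operatorname{span}\{\partial_\rho W_1,\dots,\partial_\rho W_d\}^\perp$, with a bound independent of $\vartheta$; I would prove it by contradiction and a blow-up argument localised near one angular sector, where a degenerating sequence would converge to a bounded element of the kernel of the linearised limit system about a synchronised/segregated block, and the hypotheses that no $\beta_{i,j}$ is an eigenvalue of $-\Delta+\lambda_j$ in $L^2(\bbr^N;w_i^2)$ and that $\beta_{j,j+1}<\beta_{j,j+1,*}$ collapse that kernel to the translational one, which is killed by the symmetry of $H_\vartheta$ and the orthogonality.

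With the linear estimate in hand, a contraction mapping produces, for each admissible $\rho$, a correction $\phi_\rho\in H_\vartheta$ of order strictly smaller than $S(W)$ such that $W_\rho+\phi_\rho$ solves \eqref{eq0001} modulo $\operatorname{span}\{\partial_\rho W_j\}$, and it remains to make the reduced energy $\mathcal{E}_\vartheta(\rho)$ --- the energy of \eqref{eq0001} at $W_\rho+\phi_\rho$ --- critical. Expanding with \eqref{eqnnnew1127}--\eqref{eqnnnew1125} I expect, on the window,
\[
\mathcal{E}_\vartheta(\rho)=\vartheta\Big(\mathfrak{c}_0+\mathfrak{a}\,\rho^{-\nu_*}+\mathfrak{b}\,(\rho/\vartheta)^{\frac{1-N}{2}}e^{-2\pi\sqrt{\lambda_{n_1}}\rho/\vartheta}+(\text{strictly smaller terms})\Big),
\]
with $\mathfrak{a}$ a positive multiple of $\sum_{j\in\mathfrak{m}_*}B_j\delta_j$ and $\mathfrak{b}$ a combination of $C_j,D_\tau,D_\tau'$ whose sign is controlled by the signs of the $\beta_{j,j+1}$ and $\beta_{n_\tau,n_\tau+1}$; in the exceptional regime $N=2$, $d=2$, $\lambda_1=\lambda_2$ the segregated cross-interaction and the self-interaction fall at exactly the same order --- the announced \emph{new phenomenon} --- so $\mathfrak{b}$ becomes a nonzero multiple of $2\pi^{-\frac12}C_1+\beta_{1,2}D_1$, which changes sign exactly at $\beta_{1,2}=-2\pi^{-\frac12}D_1^{-1}C_1$. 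The conditions $(a)$--$(d)$ are calibrated so that $\mathfrak{a}$ and $\mathfrak{b}$ have opposite signs and the smaller terms do not interfere; then the one-variable map $t\mapsto\mathfrak{a}\,t^{-\nu_*}+\mathfrak{b}\,(t/\vartheta)^{\frac{1-N}{2}}e^{-2\pi\sqrt{\lambda_{n_1}}t/\vartheta}$ has, for $t$ in the window, a strict interior critical point $\rho_\vartheta\sim\frac{\nu_*}{2\pi\sqrt{\lambda_{n_1}}}\vartheta\log\vartheta$ --- a maximum when $\mathfrak{a}>0$ and a minimum when $\mathfrak{a}<0$, which is why a min--max description is natural --- and this persists for $\mathcal{E}_\vartheta$ itself. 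Then $W_{\rho_\vartheta}+\phi_{\rho_\vartheta}$ is a true solution of \eqref{eq0001}; positivity follows from the maximum principle after the usual truncation of the nonlinearity, the solution is nonradial since it carries exactly $\vartheta$ bumps along the ring, and letting $\vartheta\to+\infty$ produces infinitely many geometrically distinct solutions.

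The hard part will be managing, uniformly in $\vartheta$, the loss of the global rotational/permutation symmetry that underlay \cite{WY10,PW13,PV22}: here the $d$ profiles split into clusters with \emph{different} decay rates $\sqrt{\lambda_{n_\tau}}$, and one must keep a clean hierarchy among the (polynomially small) potential term, the $k$ distinct cluster self-interactions, the intra-cluster cross-species interactions (which, when segregated, sit at the very same exponential rate as the self-interaction), and the cross-cluster interactions, while also proving nondegeneracy for a genuinely asymmetric limit system rather than for a single scalar bubble. Securing that hierarchy is exactly what forces the pinching condition $\lambda_{n_k}<4\lambda_{n_1}$ and the sign hypotheses $(a)$--$(d)$; securing the nondegeneracy is what forces "$\beta_{i,j}$ not an eigenvalue" and $\beta_{j,j+1}<\beta_{j,j+1,*}$. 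Carrying out the sharp-constant bookkeeping in the $N=2$, $d=2$ coincidence of orders is the remaining delicate computation, and it is what pins down the threshold $-2\pi^{-\frac12}D_1^{-1}C_1$.
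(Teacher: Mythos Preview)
Your proposal has a genuine structural gap in precisely the setting of this corollary. With $\lambda_1<\lambda_2<\lambda_3$ the limit system has \emph{no} symmetry relating the three species, so there is no mechanism that would force the tangential (angular) components of the projected equation to vanish once you ``lock'' the relative rotations at a fixed leading-order value. Reducing to a single radial parameter $\rho$ therefore cannot close: after the contraction step the solution satisfies the equation only modulo $\operatorname{span}\{\partial_{\rho_j}W_j,\partial_{\theta_j}W_j\}$, and you have no handle on the coefficients $\gamma_{\theta_j}^*$. The paper's proof (Lemma~\ref{lem0005} and Proposition~\ref{prop0002}) keeps the $d$ angular gaps $\alpha_j$ as genuine free parameters alongside $\rho$, expands the reduced energy $\mathcal{J}(\overrightarrow{\rho},\overrightarrow{\alpha})$ to leading order (the cross-cluster terms $D_\tau''\beta_{n_\tau,n_\tau+1}(\rho\alpha_{n_\tau})^{-1}e^{-2\sqrt{\lambda_{n_\tau}}\rho\alpha_{n_\tau}}$ in \eqref{eqnn9989}), and then finds an interior max (case $(a)$) or min (case $(b)$) over a $(k-1)$-dimensional set $\mathcal{M}$ of admissible $\overrightarrow{\alpha}_*$; the balanced condition $\sqrt{\lambda_{n_\tau}}\alpha_{n_\tau}^*=\text{const}$ emerges as the location of this critical point, it is not imposed a priori.

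Two further points. First, your ``synchronised'' ansatz with coincident centres when $\beta_{j,j+1}>0$ is borrowed from \cite{PW13}, where it works because $\lambda_1=\lambda_2$ and one can use a common profile $w$; with $\lambda_1<\lambda_2<\lambda_3$ there is no block $(aw,bw)$ solving the limit system, and the paper in fact uses a uniformly \emph{segregated} configuration for all species regardless of the sign of $\beta_{j,j+1}$ --- the sign enters only through the sign of the interaction term in the reduced energy. Second, your expansion of $\mathcal{E}_\vartheta(\rho)$ puts the self-interaction $C_j(\rho/\vartheta)^{\frac{1-N}{2}}e^{-2\pi\sqrt{\lambda_{n_1}}\rho/\vartheta}$ at leading order, but for $d\geq3$ the paper shows (and this is one of its main observations, cf.\ the discussion after \eqref{eqnnn9989}) that the cross-species interactions $e^{-2\sqrt{\lambda_{n_\tau}}\rho\alpha_{n_\tau}}$ \emph{dominate} the same-species self-interactions; the constants $C_j$ do not appear in the leading two-term competition, and the coefficient you call $\mathfrak{b}$ is built from $D_\tau''$ and $\beta_{n_\tau,n_\tau+1}$, not from $C_j$. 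This is why only $\beta_{1,2}$ and $\beta_{2,3}$ (and not $\beta_{1,3}$) enter the hypotheses.
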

	
	By Corollaries~\ref{coro0002}, \ref{coro0003} and \ref{coro0004}, we can see that for the three-coupled case $d=3$, under some suitable conditions on the potentials $V_j(x)$,  multi-species nonlinear Schr\"odinger systems~\eqref{eq0001} in dimension two $N=2$ has infinitely many nonradial positive solutions with some suitable restrictions only on $\beta_{1,2}$ and $\beta_{2,3}$, which also allows the couplings $\{\beta_{i,j}\}$ to be mixed.
	
Finally in the case of $d\geq 3$ we have the following
	
	\begin{corollary}\label{coro0006}
		Let $N=2,3$, $d\geq3$ and suppose that the assumptions $(V_1)$--$(V_2)$ hold.  If $\lambda_j=\lambda$, $\mu_j=\mu$, $\delta_j=\delta$ and $\nu_j=\nu>1$ for all $j$, $\beta_{i,j}=\beta$ for all $i,j$, then \eqref{eq0001} has infinitely many nonradial positive solutions, provided
		\begin{enumerate}
			\item[$(a)$]\quad $\delta>0$ and $\beta<\mu$,
			\item[$(b)$]\quad $\delta<0$ and $\beta<0$.
		\end{enumerate}
	\end{corollary}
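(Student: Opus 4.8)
The plan is to obtain Corollary~\ref{coro0006} as a direct specialization of Theorem~\ref{thm0001}: one only has to verify that, in the totally symmetric regime $\lambda_j=\lambda$, $\mu_j=\mu$, $\delta_j=\delta$, $\nu_j=\nu>1$, $\beta_{i,j}=\beta$, all the hypotheses of Theorem~\ref{thm0001} are met precisely when $(a)$ or $(b)$ holds. First I would note that, since every pair $(\lambda_j,\mu_j)$ equals $(\lambda,\mu)$, each building block $w_j$ is the \emph{same} function $w$ solving $-\Delta w+\lambda w=\mu w^3$; in particular all the constants $B_j$ of \eqref{eqnnnew1127} coincide with a common $B>0$. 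Moreover the ordering \eqref{eq0002} degenerates to the single cluster $k=1$, $n_0=0$, $n_1=d$, $\mathfrak n_1=\{1,\dots,d\}$, so that $\nu_*=\nu>1$ (assumed), $\mathfrak m_*=\{1,\dots,d\}$, and the pinching condition \eqref{pinch} reads $\lambda<4\lambda$, which is automatic.

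Next I would pin down the spectral threshold. Rewriting the defining equation as $-\Delta w+\lambda w=\mu\, w^2 w$ exhibits $w$ as a positive eigenfunction of $-\Delta+\lambda$ in $L^2(\bbr^N;w^2)$ with eigenvalue $\mu$; being positive, it is the principal eigenfunction, hence $\beta_{i,j,*}=\mu$ for all $i,j$, and the (discrete) spectrum of $-\Delta+\lambda$ in $L^2(\bbr^N;w^2)$ lies in $[\mu,+\infty)$ --- indeed a Rayleigh-quotient test forces every such eigenvalue to be positive. Consequently, in case $(a)$ (where $\beta<\mu$) and in case $(b)$ (where $\beta<0<\mu$) the common value $\beta=\beta_{i,j}$ is not an eigenvalue of $-\Delta+\lambda$ in $L^2(\bbr^N;w^2)$, and $\beta_{j,j+1}=\beta<\mu=\beta_{j,j+1,*}$ for all $j$; these are exactly the two standing hypotheses on the coupling parameters required in Theorem~\ref{thm0001}.

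Finally I would match the sign conditions. Since $B_j=B>0$ and $\delta_j=\delta$, one has $\sum_{j\in\mathfrak m_*}B_j\delta_j=dB\delta$, whose sign is that of $\delta$; and since $k=1$ the index set $\tau=1,\dots,k-1$ is empty, so every cluster-chaining requirement in Theorem~\ref{thm0001}$(a)$--$(b)$, namely those of the form $\sum_{j=n_{\tau-1}+1}^{n_\tau-1}\beta_{j,j+1}>0$ and $\beta_{n_\tau,n_\tau+1}>0$ (or their reverses), is vacuously satisfied. Therefore: if $\delta>0$ and $\beta<\mu$, then $\sum_{j\in\mathfrak m_*}B_j\delta_j>0$ and Theorem~\ref{thm0001}$(a)$ yields infinitely many nonradial positive solutions of \eqref{eq0001}; if $\delta<0$ and $\beta<0$, then $\sum_{j\in\mathfrak m_*}B_j\delta_j<0$, $d\geq3$, and Theorem~\ref{thm0001}$(b)$ does the same. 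I expect no genuine obstacle here, as the argument is pure bookkeeping; the only points meriting attention are that the single bound $\beta<\mu$ does double duty (it keeps $\beta$ off the spectrum and secures $\beta<\beta_{j,j+1,*}$) and that the inter-cluster conditions of Theorem~\ref{thm0001} disappear once all the $\lambda_j$ agree and $k=1$.
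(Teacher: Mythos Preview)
Your proposal is correct and matches the paper's approach: the corollary is stated as an immediate specialization of Theorem~\ref{thm0001}, and your verification of the hypotheses (in particular that $k=1$ makes the $\tau=1,\dots,k-1$ conditions vacuous, that the pinching condition \eqref{pinch} is trivial, and that $\beta<\mu=\beta_{j,j+1,*}$ keeps $\beta$ both below the threshold and off the spectrum) is exactly what is needed. The paper itself gives no separate proof beyond presenting the corollary as a direct consequence.
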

	
	Corollary~\ref{coro0006} is a generalization of \cite[Theorem~1.1]{PV22} in the sense that, we remove the smallness assumption (\ref{PisBeta}) in \cite[Theorem~1]{PV22} by giving a uniformly upper bound of $\beta$ for the existence of infinitely many
    nonradial positive solution of \eqref{eq0001} in the case of $\delta>0$ and by showing that there is no lower bound of $\beta$ for the existence of infinitely many nonradial positive solution of \eqref{eq0001} in the case of $\delta<0$.
	
	\subsection{Further remarks}
	Our strategy is still to use the Ljapunov-Schmidt reduction arguments to construct infinitely many solutions of \eqref{eq0001}.  In this procedure, by the assumption~$(V_2)$, \eqref{eq0003} and \eqref{eqnnnew0008},
	\begin{eqnarray*}
		\mathbf{W}=(\sum_{t=1}^{\vartheta}w_1(x-\eta_{t,1}), \sum_{t=1}^{\vartheta}w_2(x-\eta_{t,2}), \cdots, \sum_{t=1}^{\vartheta}w_d(x-\eta_{t,d}))
	\end{eqnarray*}
	is a natural approximation of \eqref{eq0001} if
	\begin{eqnarray*}
		\min_{(t,j)\not=(s,i)}|\eta_{t,j}-\eta_{s,i}|\to+\infty\quad\text{and}\quad\min_{t,j}|\eta_{t,j}|\to+\infty
	\end{eqnarray*}
	as $\vartheta\to+\infty$.
	To continue, we need to construct a correction $\widetilde{\mathbf{v}}_*$ which is much smaller than the approximation $\mathbf{W}$ in a suitable sense and then, to prove that $\mathbf{W}+\widetilde{\mathbf{v}}_*$ is a genuine solution of \eqref{eq0001} by adjusting the locations of $\{\eta_{t,j}\}$.
	
	\vskip0.12in
	
	As that in \cite{PW13,PV22,WY10}, we want to put $\{\eta_{t,j}\}$ on a circle with a large radius $\rho$ to reduce the number of parameters in adjusting the locations of $\{\eta_{t,j}\}$ and
	using the number of bumps of the solutions, as a parameter in the construction of spiked solutions of \eqref{eq0001}.    In \cite{PW13,PV22,WY10}, the locations of $\{\eta_{t,j}\}$ are constructed to be invariant under the rotation of the angles $\frac{2\pi}{\vartheta}$ or $\frac{\pi}{\vartheta}$, respectively.  Thus, the adjustment of the locations of $\{\eta_{t,j}\}$ will be reduced to the solvability of a one-dimensional equation of $\vartheta$ under these invariance.  {\bf However, we do not assume $\lambda_j\equiv\lambda$ for all $j$ or $\beta_{i,j}\equiv\beta$ for all $i$ and $j$ in Theorem~\ref{thm0001}, thus, the limit system of \eqref{eq0001} at infinity can not be rotationally invariant anymore.}  This requires us to introduce more parameters in adjusting the locations of  $\{\eta_{t,j}\}$.  Note that the parameter $\rho$ used in \cite{PW13,PV22,WY10} is in the normal direction of the circle.  Therefore, we shall introduce another $d$ parameters in the tangential direction of the circle in adjusting the locations of $\{\eta_{t,j}\}$.  The locations of the spikes $\{\eta_{t,j}\}$ is roughly summarized in the Figure~\ref{Picture01} and we refer the readers to \eqref{eq0004} for more details.

\begin{figure}[htbp]
  \centering
  \includegraphics[scale=0.07]{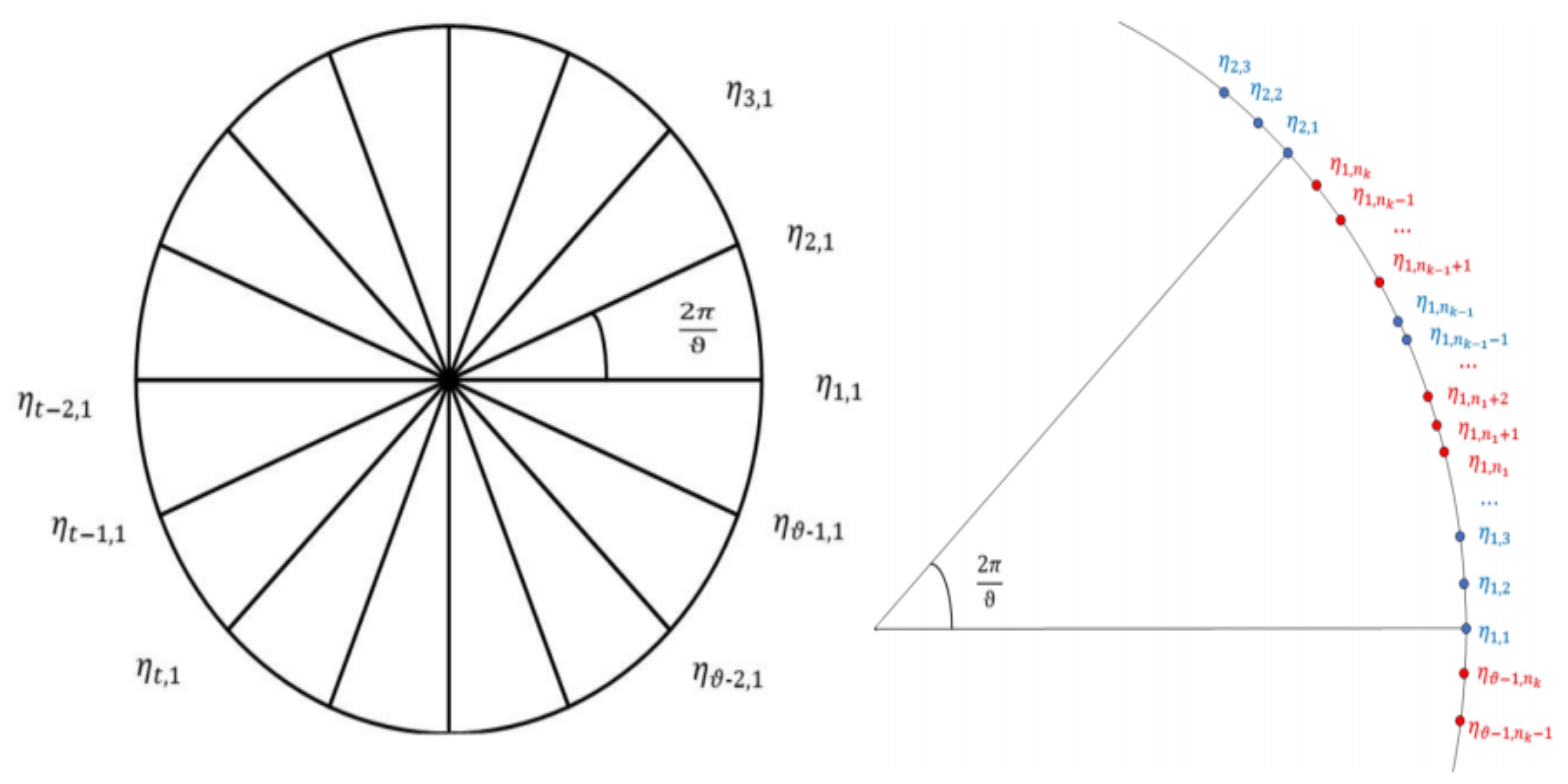}\\
  \caption{The locations of $\{\eta_{t,j}\}$}\label{Picture01}
\end{figure}

	\vskip0.12in
	
	As pointed out in \cite{PV22}, due to the linear coupling term, the correction $\widetilde{\mathbf{v}}_*$ needs to be divided into two parts, say $\mathbf{Q}_*$ (the main term) and $\mathbf{v}_{**}$ (the high order term), even in the symmetric case where the potentials $V_j(x)=V_i(x)$ for all $j,i$ satisfy the symmetric condition~$(V_*)$.
{\bf In this paper, by using the representation formula and making careful and almost sharp estimates, we find out the leading term of the main term of the correction in the reduced problem in the general case~$(V_1)$--$(V_2)$, see Lemma~\ref{lem0006} for more details.}  This gives a precise expansion of the reduced energy functional up to the leading order term in the general case~$(V_1)$--$(V_2)$.  Surprisingly, our estimates show that due to the crossing interaction among the peaks of different components for $i,j\in\mathfrak{n}_{\tau}$ with all $\tau=1,2,\cdots,k$, the correction $\widetilde{\mathbf{v}}_*$ can be negligible in the reduced energy functional for these terms labelled by $j\not=n_{\tau}$ and the main terms of the correction do have contributions to the reduced energy functional for these terms labelled by $j=n_{\tau}$, see \eqref{eqnn9989} and \eqref{eqnnn9989} for more details.  With these two precise expansions of the reduced energy functional up to the leading order term, we observed that, according to our construction of $\{\eta_{t,j}\}$, there is only the interaction among the peaks of different components in the tangential direction.  Moreover, if $\sum_{j=n_{\tau-1}+1}^{n_{\tau}-1}\beta_{j,j+1}>0$ and $\beta_{n_{\tau},n_{\tau}+1}>0$ for all $\tau=1,2,\cdots,k-1$, then the reduced energy functional takes maximum in this direction while, if $\sum_{j=n_{\tau-1}+1}^{n_{\tau}-1}\beta_{j,j+1}<0$ and $\beta_{n_{\tau},n_{\tau}+1}<0$ for all $\tau=1,2,\cdots,k-1$, then the reduced energy functional takes minimum in this direction.  Thus, according to the assumption~\eqref{eq0002}, the balanced conditions in the tangential direction should obey the relations $|\eta_{t,j-1}-\eta_{t,j}|=(1+o(1))|\eta_{t,j}-\eta_{t,j+1}|$ for all $j=n_{\tau-1}+2,\cdots,n_{\tau}-1$ with all $\tau=1,2,\cdots,k$, $|\eta_{t,n_{\tau}-1}-\eta_{t,n_{\tau}}|>|\eta_{t,n_{\tau}}-\eta_{t,n_{\tau}+1}|$ for all $\tau=1,2,\cdots,k-1$ and $|\eta_{t,n_{k}-1}-\eta_{t,n_{k}}|=(1+o(1))|\eta_{t,n_{1}-1}-\eta_{t,n_{1}}|$.  We refer the readers to the Figure~\ref{Picture01} for a better understanding of the balanced conditions in the tangential direction stated above.  On the other hand, as that observed in \cite{PW13,PV22}, the potential effect, the interplay between peaks of the same component and the interaction among the peaks of different components will compete in the normal direction.  For $d\geq3$, thanks to the precise expansion of the reduced energy functional up to the leading order term in the general case~$(V_1)$--$(V_2)$, we observed that the interaction among the peaks of different components always dominates the interplay between peaks of the same component.  Thus, the competition in the normal direction is reduced to the potential effect and the interaction among the peaks of different components for $d\geq3$, {\bf which help us to give a positive answer to the conjecture in \cite{PV22} and remove the smallness assumption (\ref{PisBeta}) in \cite[Theorem~1.1]{PV22} even in the general case~$(V_1)$--$(V_2)$.}  For $d=2$, again, thanks to the precise expansion of the reduced energy functional up to the leading order term in the general case~$(V_1)$--$(V_2)$, we observed that the situation is completed different, that is, the interplay between peaks of the same component always dominates the interaction among the peaks of different components except in the symmetric case $\lambda_1=\lambda_2$ of dimension two $N=2$ while, in this case, according to the crossing phenomenon of the interaction among the peaks of different components, the interplay between peaks of the same component and the interaction among the peaks of different components are the same order term.  Thus, the competition in the normal direction is always reduced to the potential effect and the interaction among the peaks of the same component for $d=2$ except in the symmetric case $\lambda_1=\lambda_2$ of dimension two $N=2$ while, in this case, the second term is generated by the competition of the interplay between peaks of the same component and the interaction among the peaks of different components.  Roughly speaking, in the normal direction, if the interplay between peaks of the same component dominates the interaction among the peaks of different components then the reduced energy functional takes maximum in this direction for $\sum_{j\in\mathfrak{m}_{*}}B_j\delta_j>0$ while, the reduced energy functional takes minimum in this direction for $\sum_{j\in\mathfrak{m}_{*}}B_j\delta_j<0$ if the interaction among the peaks of different components dominates the interplay between peaks of the same component.  Now, summarizing the above observations, we can use variational arguments to find out the balanced conditions in the normal direction and in the tangential direction to solve the reduced problem by taking the maximum of the reduced energy functional over a suitable set in the case~$(a)$ of Theorem~\ref{thm0001} and taking the minimum of the reduced energy functional over a suitable set in the cases~$(b)$ and $(d)$ of Theorem~\ref{thm0001} while, a min-max variational argument to the reduced energy functional over a suitable set is needed in the case~$(c)$ of Theorem~\ref{thm0001}, see Proposition~\ref{prop0002} for more details.  {\bf We remark that the reduced problem is much more delicate than that of \cite{PW13,PV22,WY10} and the competition of the interplay between peaks of the same component and the interaction among the peaks of different components also generates a new phenomenon for $N=2$ and $d=2$, that is, there is a jumping point of the coupling $\beta_{1,2}$ for the existence of infinitely many nonradial positive solutions of \eqref{eq0001} in the symmetric case $\lambda_1=\lambda_2$, which has never been observed in the literature to our best knowledge.}

	\vskip0.12in
	
It is also worth pointing out that the conditions~$(a)$--$(d)$ is {\bf almost optimal} since these conditions completely capture the leading order terms of the reduced energy functional.  Moreover, the conditions $\beta_{j,j+1}<\beta_{j,j+1,*}$ for all $j$ {\bf also seems to be necessary} in constructing infinitely many nonradial positive solutions of \eqref{eq0001}, since the potential solution $\mathbf{U}=\mathbf{W}+\mathbf{Q}_*+\mathbf{v}_{**}$, constructed by us in the proof of Theorem~\ref{thm0001}, can not be positive anymore for $\vartheta$ sufficiently large if $\beta_{j,j+1}>\beta_{j,j+1,*}$ for some $j$, see Remark~\ref{rmk0001} for more details.  Thus, it will be very interesting to construct infinitely many nonradial positive solutions of \eqref{eq0001} if $\beta_{j,j+1}>\beta_{j,j+1,*}$ for some $j$.  On the other hand, the pinching condition~\eqref{pinch} is also crucial in our construction of the correction $\widetilde{\mathbf{v}}_*$ in proving Theorem~\ref{thm0001}, which can be understood as a condition that the natural approximation
$\mathbf{W}$ is very close to a genuine solution of \eqref{eq0001}, since the natural approximation $\mathbf{W}$ dominates every step of our construction under this condition.  We believe this pinching condition~\eqref{pinch} is {\bf optimal} in the constructions of solutions of \eqref{eq0001} which are started from the natural approximation
$\mathbf{W}$.  Thus, it will also be very interesting to construct infinitely many nonradial positive solutions of \eqref{eq0001} without this pinching condition~\eqref{pinch}.

	\vskip0.12in
	
A very challenging problem for \eqref{eq0001} is the existence of infinitely many positive solutions when the potentials $V_j(x)$ are not radially symmetric.  In \cite{dWY15}, infinitely many nonradial positive solutions are constructed to the single scalar equation~(\ref{scalar}) with nonradial potential $V(x)$ satisfying the decaying property as same as that of $(V_0)$. In a future work, we will remove all the radial symmetries of potentials for the full system \eqref{eq0001}.

\medskip

	\noindent{\bf\large Notations.} Throughout this paper, $C$ and $C'$ are indiscriminately used to denote various absolutely positive constants.  $a\sim b$ means that $C'b\leq a\leq Cb$ and $a\lesssim b$ means that $a\leq Cb$.

	\section{The first approximation}
	For every $j\in\mathfrak{n}_\tau$ with $\tau=1,2,\cdots,k$, we define
	\begin{eqnarray}\label{eq0004}
		\xi_{t,j}=\left\{\aligned&\bigg(\cos\bigg(\theta_{j}+\frac{2(t-1)\pi}{\vt}\bigg), \sin\bigg(\theta_{j}+\frac{2(t-1)\pi}{\vt}\bigg)\bigg),\quad N=2,\\
		&\bigg(\cos\bigg(\theta_{j}+\frac{2(t-1)\pi}{\vt}\bigg), \sin\bigg(\theta_{j}+\frac{2(t-1)\pi}{\vt}\bigg), 0\bigg),\quad N=3,
		\endaligned\right.
	\end{eqnarray}
	where $\theta_{j}=\sum_{i=0}^{j-1}\alpha_{i}$
	with $\alpha_0=o(\vartheta^{-1})$, $t=1,2,\cdots, \vt$ for $\vt\in\bbn$ sufficiently large and
	\begin{eqnarray*}
		\alpha_j\sim\frac{2\pi}{\vartheta}-\sum_{i=0}^{d-1}\alpha_i\sim\frac{1}{\vt}
	\end{eqnarray*}
	for all $j=1,2,\cdots,d-1$.  For the sake of simplicity, we denote $\alpha_d=\frac{2\pi}{\vartheta}-\sum_{i=0}^{d-1}\alpha_i$.
	For every $j\in\mathfrak{n}_\tau$ with $\tau=1,2,\cdots,k$, we also define
	\begin{eqnarray*}\label{eq0006}
		W_j=\sum_{t=1}^{\vt}\widetilde{w}_{t,j}
	\end{eqnarray*}
	where $\widetilde{w}_{t,j}=w_{j}(r_{t,j})$ with $r_{t,j}=|x-\rho_j\xi_{t,j}|$ and $\rho_\tau>0$ sufficiently large satisfying
	\begin{eqnarray*}
		\rho_{n_{\tau-1}+1}=\rho_{n_{\tau-1}+2}=\cdots=\rho_{n_{\tau}}=\rho+O(1)\quad\text{for all }\tau=1,2,\cdots,k
	\end{eqnarray*}
	with $\rho\sim \vt\log \vt$ being chosen later.
	By the definition of $\xi_{t,j}$ and the radial symmetry of $w_j$, we know that $W_j$ is invariant under the rotation of the angle $\frac{2\pi}{\vt}$ for every $j\in \mathfrak{n}_{\tau}$ with all $\tau=1,2,\cdots,k$.  We denote
	\begin{eqnarray*}\label{eq0007}
		\eta_{t,j}=\rho_j\xi_{t,j}.
	\end{eqnarray*}
	Then for $\vartheta>0$ sufficiently large, by the geometry of the constructions of $\{\xi_{t,j}\}$,
	\begin{eqnarray*}\label{eqn0008}
		\eta_{t,j}-\eta_{s,i}=2\rho\sin\bigg(\bigg|\frac{\theta_{j}-\theta_{i}}{2}+\frac{(t-s)\pi}{\vt}\bigg|\bigg)+h.o.t.
	\end{eqnarray*}
	for all $(t,j)\not=(s,i)$.  In particular,
	\begin{eqnarray}\label{eq0009}
		\widetilde{\eta}_{j}=\min_{m\not=n}|\eta_{m,j}-\eta_{n,j}|=|\eta_{1,j}-\eta_{2,j}|=\frac{2\pi\rho}{\vt}+h.o.t..
	\end{eqnarray}
	If $n_\tau-n_{\tau-1}>1$, then we can define
	\begin{eqnarray}\label{eq0010}
		\widehat{\eta}_{\tau}=\min_{i\not=j;i,j\in\mathfrak{n}_\tau;m,n\in\bbn}|\eta_{m,i}-\eta_{n,j}|=\rho\widehat{\alpha}_\tau+h,o,t.,
	\end{eqnarray}
	where
	\begin{equation*}
		\widehat{\alpha}_\tau=\min\{\min\limits_{\mathfrak{n}_{\tau-1}+1\le j<\mathfrak{n}_{\tau}}{\alpha_j},\frac{2\pi}{\vt}-\sum_{j=\mathfrak{n}_{\tau-1}+1}^{\mathfrak{n}_{\tau}-1}\alpha_j\}.
	\end{equation*}
	Clearly, $\widehat{\eta}_{\tau}<\widetilde{\eta}_{j}$ for all $j\in\mathfrak{n}_\tau$ and all $\tau=1,2,\cdots,k$.
	Finally, we denote
	\begin{eqnarray}\label{eq0011}
		\widehat{\eta}=\min_{(m,i)\not=(n,j)}|\eta_{m,i}-\eta_{n,j}|\quad\text{and}\quad\widehat{\alpha}=\min_{1\le j\le d}{\alpha}_j.
	\end{eqnarray}
	
	\vskip0.12in
	
	For every $j=1,2,\cdots,d$, we define
	\begin{eqnarray*}
		U_j=W_j+\varphi_j,
	\end{eqnarray*}
	then by \eqref{eq0002}, $\mathbf{U}=(U_1, U_2, \cdots, U_d)$ is a solution of \eqref{eq0001} if and only if $\varphi=(\varphi_1,\varphi_2,\cdots,\varphi_d)$ is a solution of the following system:
	\begin{equation}\label{eq0012}
		\left\{\aligned&\mathcal{L}_j(\varphi)=\sum_{l=1}^3E_{j,l}+N_j,\quad\text{in }\bbr^N,\\
		&\varphi_j(x)\to0\quad\text{as }|x|\to+\infty,\quad j=1,2,\cdots,d,\endaligned\right.
	\end{equation}
	where
	\begin{eqnarray*}
		\mathcal{L}_j(\varphi)=-\Delta \varphi_j+V_j(x)\varphi_j-3\mu_jW_j^2\varphi_j-\sum_{i=1;i\not=j}^{d}\beta_{i,j}(W_i^2\varphi_j+2W_iW_j\varphi_i)
	\end{eqnarray*}	
	is the linear operator,
	\begin{eqnarray}\label{eq0013}
		E_{j,1}=(\lambda_j-V_j(x))W_j,\ \ E_{j,2}=\mu_j(W_j^3-\sum_{t=1}^{\vartheta}w_{t,j}^3),\ \ E_{j,3}=\sum_{i=1;i\not=j}^{d}\beta_{i,j}W_i^2W_j
	\end{eqnarray}
	are the errors and
	\begin{eqnarray*}
		N_j=\sum_{i=1;i\not=j}^{d}\beta_{i,j}(2W_i\varphi_i\varphi_j+W_j\varphi_i^2+\varphi_i^2\varphi_j)+3\mu_j W_j\varphi_j^2+\mu_j\varphi_j^3
	\end{eqnarray*}
	is the nonlinear part.
	\begin{lemma}\label{lem0001}
		Suppose the assumptions~$(V_1)$--$(V_2)$ hold.  If $\lambda_{n_k}<4\lambda_{n_1}$ under the condition~\eqref{eq0002} then for $\vartheta$ sufficiently large, we have
		\begin{eqnarray}
			|E_{j,1}|&\lesssim&\rho^{-\nu_j}\bigg(\sum_{t=1}^{\vartheta}(1+r_{t,j})^{-(\frac{N-1}{2}-\sigma)}e^{-\sqrt{\lambda_{n_\tau}}r_{t,j}}\chi_{\{r_{t,j}\leq\frac{\widetilde{\eta}_{j}}{2}\}}\notag\\
			&&+\bigg(\sum_{t=1}^{\vartheta}(1+r_{t,j})^{-(\frac{N-1}{2}-\sigma)}e^{-\sqrt{\lambda_{n_\tau}}r_{t,j}}\bigg)\chi_{\cap_{t=1}^{\vartheta}\{r_{t,j}\geq\frac{\widetilde{\eta}_{j}}{2}\}}\bigg),\label{eqnnnew3001}
		\end{eqnarray}
		\begin{eqnarray}\label{eqnnnew3012}
			|E_{j,2}|&\lesssim& \widetilde{\eta}_{j}^{\frac{1-N}{2}}e^{-\sqrt{\lambda_{n_\tau}}\widetilde{\eta}_{j}}\bigg(\sum_{t=1}^{\vartheta}(1+r_{t,j})^{-(\frac{N-1}{2}-\sigma)}
			e^{-\sqrt{\lambda_{n_\tau}}r_{t,j}}\chi_{\{r_{t,j}
				\leq\frac{\widetilde{\eta}_{j}}{2}\}}\notag\\
			&&+\bigg(\sum_{t=1}^{\vartheta}(1+r_{t,j})^{-(\frac{N-1}{2}-\sigma)}e^{-\sqrt{\lambda_{n_\tau}}r_{t,j}}\bigg)
			\chi_{\cap_{t=1}^{\vartheta}\{r_{t,j}>\frac{\widetilde{\eta}_{j}}{2}\}}\bigg)
		\end{eqnarray}
		and
		\begin{eqnarray}\label{eqnnnew3023}
			|E_{j,3}|&\lesssim&\widehat{\eta}^{\frac{1-N}{2}}e^{-\sqrt{\lambda_{n_1}}\widehat{\eta}}
			\bigg(\sum_{\tau=1}^{k}\sum_{i=n_{\tau-1}+1}^{n_{\tau}}\sum_{t=1}^{\vartheta}(1+r_{t,i})^{-(\frac{N-1}{2}-\sigma)}e^{-\sqrt{\lambda_{n_{\tau}}}r_{t,i}}
			\chi_{\{r_{t,i}\leq\delta\widehat{\eta}\}}\notag\\
			&&+\bigg(\sum_{\tau=1}^{k}\sum_{i=n_{\tau-1}+1}^{n_{\tau}}\sum_{t=1}^{\vartheta}(1+r_{t,i})^{\frac{1-N}{2}}e^{-\varsigma\sqrt{\lambda_{n_{\tau}}}r_{t,i}}\bigg)\chi_{\cap_{i=1}^{d}\cap_{t=1}^{\vartheta}
				\{r_{t,i}>\delta\widehat{\eta}\}}\bigg),
		\end{eqnarray}
		where $\sigma>\max\{\nu_j,1\}$, $\varsigma\in(0, 1)$ is sufficiently small such that
		\begin{eqnarray}\label{eq3017}
			(2-\varsigma)\sqrt{\lambda_{n_1}}\geq\sqrt{\lambda_{n_\tau}}\quad\text{for all }\tau=2,3,\cdots,k.
		\end{eqnarray}
		and
		\begin{eqnarray}\label{eq3018}
			\delta\in\bigg(0, \frac{\sqrt{\lambda_{n_2}}-\sqrt{\lambda_{n_1}}}{\sqrt{\lambda_{n_2}}-\varsigma\sqrt{\lambda_{n_1}}}\bigg).
		\end{eqnarray}
	\end{lemma}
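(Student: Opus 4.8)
The plan is to establish the three bounds \eqref{eqnnnew3001}, \eqref{eqnnnew3012} and \eqref{eqnnnew3023} one at a time; in each case the proof rests on three ingredients. The first is the upper bound contained in \eqref{eqnnnew0008}, i.e. $\widetilde{w}_{t,j}(x)\lesssim(1+r_{t,j})^{\frac{1-N}{2}}e^{-\sqrt{\lambda_{n_\tau}}r_{t,j}}$ for $j\in\mathfrak{n}_\tau$, and the analogous estimate for a bump of any other component. The second is the elementary \emph{weight--transfer} inequality
\begin{equation*}
(1+|x|)^{-\nu_j}\lesssim\rho^{-\nu_j}(1+r_{t,j})^{\sigma}\qquad\text{for all }x\in\bbr^N,
\end{equation*}
valid whenever $\sigma>\max\{\nu_j,1\}$, which one checks by distinguishing the case $r_{t,j}\le\rho/4$ (then $|x|\ge|\eta_{t,j}|-r_{t,j}\ge\rho/4$ and the bound is immediate) from $r_{t,j}>\rho/4$ (then the right--hand side is $\gtrsim\rho^{\sigma-\nu_j}\ge1$). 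The third is the usual family of interaction estimates for well--separated bumps: by \eqref{eq0009}--\eqref{eq0011}, two distinct centres $\eta_{t,j}$ and $\eta_{s,i}$ are at distance $\ge\widetilde\eta_j$ if $i=j$, at distance $\ge\widehat\eta_\tau$ if $i\ne j$ with $i,j\in\mathfrak{n}_\tau$, and at distance $\ge\widehat\eta$ in general, so that sums such as $\sum_{s\ne t}\widetilde{w}_{s,j}(x)$ are geometric, dominated by their largest term, and carry an exponential gain of order $e^{-\sqrt{\lambda_{n_\tau}}\widetilde\eta_j}$ (resp. $e^{-\sqrt{\lambda_{n_1}}\widehat\eta}$) extracted from the separation.

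For \eqref{eqnnnew3001}, assumption $(V_2)$ together with the continuity and positivity of $V_j$ on compact sets yields $|\lambda_j-V_j(x)|\lesssim(1+|x|)^{-\nu_j}$ on all of $\bbr^N$. Multiplying by $W_j=\sum_t\widetilde{w}_{t,j}$, applying the weight--transfer inequality termwise and using $w_j(r)\sim r^{\frac{1-N}{2}}e^{-\sqrt{\lambda_{n_\tau}}r}$ turns $(1+|x|)^{-\nu_j}\widetilde{w}_{t,j}(x)$ into $\rho^{-\nu_j}(1+r_{t,j})^{-(\frac{N-1}{2}-\sigma)}e^{-\sqrt{\lambda_{n_\tau}}r_{t,j}}$. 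On $\cap_t\{r_{t,j}\ge\widetilde\eta_j/2\}$ one keeps the full sum; off it, $r_{t_0,j}<\widetilde\eta_j/2$ for a unique $t_0$, all the other centres satisfy $r_{s,j}\ge\widetilde\eta_j/2$, and the geometric estimate gives $\sum_{s\ne t_0}\widetilde{w}_{s,j}(x)\lesssim(1+r_{t_0,j})^{-(\frac{N-1}{2}-\sigma)}e^{-\sqrt{\lambda_{n_\tau}}r_{t_0,j}}$, so that only the $t_0$--term survives, which is the first line of \eqref{eqnnnew3001}. For \eqref{eqnnnew3012} one expands
\begin{equation*}
W_j^3-\sum_t w_{t,j}^3=3\sum_{t\ne s}w_{t,j}^2w_{s,j}+6\sum_{t<s<r}w_{t,j}w_{s,j}w_{r,j},
\end{equation*}
so each monomial contains at least two distinct bumps, and the governing pointwise bound is: for $t\ne s$, writing $r_1=r_{t,j}$, $r_2=r_{s,j}$ so that $r_1+r_2\ge|\eta_{t,j}-\eta_{s,j}|\ge\widetilde\eta_j$,
\begin{equation*}
w_j(r_1)^2w_j(r_2)\lesssim(1+r_1)^{1-N}(1+r_2)^{\frac{1-N}{2}}e^{-\sqrt{\lambda_{n_\tau}}(2r_1+r_2)}\lesssim\widetilde\eta_j^{\frac{1-N}{2}}e^{-\sqrt{\lambda_{n_\tau}}\widetilde\eta_j}(1+r_1)^{-(\frac{N-1}{2}-\sigma)}e^{-\sqrt{\lambda_{n_\tau}}r_1},
\end{equation*}
where in the last step one uses $2r_1+r_2\ge\widetilde\eta_j+r_1$ to pull out $e^{-\sqrt{\lambda_{n_\tau}}\widetilde\eta_j}$ and absorbs the remaining polynomial factors (here $\sigma>0$ suffices, since $r_1+r_2\ge\widetilde\eta_j$ forces $1+\max\{r_1,r_2\}\ge1+\widetilde\eta_j/2$); the triple products are handled similarly. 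Summing over the monomials with the geometric estimate and splitting into the two regions exactly as before gives \eqref{eqnnnew3012}.

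The bound \eqref{eqnnnew3023} is the delicate one, because $E_{j,3}$ couples components with possibly different decay rates $\sqrt{\lambda_i}$. Fix $i\ne j$; since $i\ne j$, every $i$--centre and every $j$--centre are at mutual distance $\ge\widehat\eta$, so for each $x$ the nearest--centre distances $d_i$, $d_j$ to the $i$-- and $j$--bumps obey $d_i+d_j\ge\widehat\eta$ by the triangle inequality, and (after carrying the cross terms inside $W_i^2$ and all the sums with the geometric estimate) $W_i^2W_j$ is controlled by $d_i^{1-N}e^{-2\sqrt{\lambda_i}d_i}\,d_j^{\frac{1-N}{2}}e^{-\sqrt{\lambda_j}d_j}$ up to polynomial factors. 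On the far set $\cap_{i,t}\{r_{t,i}>\delta\widehat\eta\}$ one has $d_i,d_j\ge\delta\widehat\eta$, hence $(d_id_j)^{\frac{1-N}{2}}\lesssim\widehat\eta^{\frac{1-N}{2}}$, while from $\lambda_i,\lambda_j\ge\lambda_{n_1}$ and $\varsigma<1$,
\begin{equation*}
2\sqrt{\lambda_i}\,d_i+\sqrt{\lambda_j}\,d_j\ge\sqrt{\lambda_{n_1}}(d_i+d_j)+\varsigma\sqrt{\lambda_i}\,d_i\ge\sqrt{\lambda_{n_1}}\,\widehat\eta+\varsigma\sqrt{\lambda_i}\,d_i,
\end{equation*}
which extracts the prefactor $e^{-\sqrt{\lambda_{n_1}}\widehat\eta}$ and leaves the weak decay $e^{-\varsigma\sqrt{\lambda_{n_\tau}}r_{t,i}}$ in the second line of \eqref{eqnnnew3023}. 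On the complementary region $x$ is close to a single bump, say an $i_0$--bump with $r_{t_0,i_0}\le\delta\widehat\eta$; replacing the $W$'s of the other component by their dominant bump, which sits at distance $\ge\widehat\eta-r_{t_0,i_0}\ge(1-\delta)\widehat\eta$ from $x$, the required estimate reduces to inequalities of the form $e^{-(2\sqrt{\lambda_{n_1}}-\sqrt{\lambda_{n_\tau}})\widehat\eta}\,e^{c\,r_{t_0,i_0}}\lesssim1$ for $r_{t_0,i_0}\le\delta\widehat\eta$, with $c>0$ bounded in terms of the $\lambda$'s. This is exactly where the pinching condition \eqref{pinch} is indispensable: it is equivalent to $2\sqrt{\lambda_{n_1}}-\sqrt{\lambda_{n_\tau}}>0$ for every $\tau$, i.e. to the existence of $\varsigma\in(0,1)$ satisfying \eqref{eq3017}, and the admissible range \eqref{eq3018} of $\delta$ is dictated by — and suffices for — these exponent inequalities simultaneously over all component pairs. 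Carrying out this exponent bookkeeping uniformly across all the mixed monomials, and checking that the weakened rate $\varsigma\sqrt{\lambda_{n_\tau}}$ in the far regime is still adequate, is the main point of difficulty; by contrast the bounds on $E_{j,1}$ and $E_{j,2}$ are routine adaptations of the single--equation estimates in \cite{WY10,PW13}.
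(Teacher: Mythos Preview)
Your proposal is correct and follows the same overall strategy as the paper --- pointwise decay bounds for the bumps, region--splitting according to proximity to the centres, and geometric control of sums of well--separated Gaussians. Two points are organised a little more cleanly than in the paper. First, your weight--transfer inequality $(1+|x|)^{-\nu_j}\lesssim\rho^{-\nu_j}(1+r_{t,j})^{\sigma}$ handles $E_{j,1}$ in one stroke, whereas the paper separates $|x|<\rho/3$ from $|x|\ge\rho/3$; both arguments are equivalent, but yours avoids a case distinction. Second, for the far region of $E_{j,3}$ you use the triangle inequality $d_i+d_j\ge\widehat{\eta}$ directly to obtain $2\sqrt{\lambda_i}\,d_i+\sqrt{\lambda_j}\,d_j\ge\sqrt{\lambda_{n_1}}\widehat{\eta}+\varsigma\sqrt{\lambda_i}\,d_i$, which is simpler than the paper's route via \eqref{eq3021}--\eqref{eq3022} (where the separation $|\eta_{n,j}-\eta_{m,i}|$ is inserted first and then summed). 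Conversely, the paper treats $E_{j,3}$ by splitting into same--group interactions $E_{j,3,1}$ (handled like $E_{j,2}$, with $\widehat{\eta}_\tau$ in place of $\widetilde{\eta}_j$) and cross--group interactions $E_{j,3,2}$; this makes explicit that the pinching condition and the choice \eqref{eq3018} of $\delta$ enter only through the cross--group terms, and the case analysis there (near a $j$--bump, near an $i$--bump, far from all) spells out precisely the exponent inequalities that you summarise as ``$e^{-(2\sqrt{\lambda_{n_1}}-\sqrt{\lambda_{n_\tau}})\widehat{\eta}}e^{c\,r_{t_0,i_0}}\lesssim1$''. Your near--region sketch is correct provided you use the sharper bound $d_{\text{other}}\ge\widehat{\eta}-r_{t_0,i_0}$ (which you do write) rather than only $\ge(1-\delta)\widehat{\eta}$; with that, the needed inequalities reduce to $\delta<\tfrac12$ in one direction and to \eqref{eq3018} in the other, exactly as in the paper.
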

	\begin{proof}
		Let us begin with the proof by estimating $|E_{j,1}|$.
		For $|x|<\frac{\rho}{3}$, by the triangle inequality, we have $r_{t,j}=|x-\eta_{t,j}|\geq\frac{\rho}{2}\gtrsim\widetilde{\eta}_j$ for all $t$, where $\widetilde{\eta}_j$ is given by \eqref{eq0009}.  Thus, by the assumptions~$(V_1)$ and $(V_2)$ and \eqref{eqnnnew0008},
		\begin{eqnarray}\label{eq3001}
			|E_{j,1}|\lesssim \sum_{t=1}^{\vt}r_{t,j}^{\frac{1-N}{2}}e^{-\sqrt{\lambda_{n_\tau}}r_{t,j}}\lesssim\rho^{-\sigma}\sum_{t=1}^{\vt}r_{t,j}^{-(\frac{N-1}{2}-\sigma)}e^{-\sqrt{\lambda_{n_\tau}}r_{t,j}}
		\end{eqnarray}
		for $|x|<\frac{\rho}{3}$.
		For $|x|\geq\frac{\rho}{3}$, by the assumption~$(V_2)$ and \eqref{eqnnnew0008},
		\begin{eqnarray}\label{eq3002}
			|E_{j,1}|\lesssim \rho^{-\nu_j}\sum_{t=1}^{\vt}(1+r_{t,j})^{-\frac{N-1}{2}}e^{-\sqrt{\lambda_{n_\tau}}r_{t,j}}.
		\end{eqnarray}
		Now if $r_{t,j}\leq\frac{\widetilde{\eta}_{j}}{2}$ for some $t$, then $r_{s,j}\geq\frac{\widetilde{\eta}_{j}}{2}$ for all $s\not=t$ and $|x|\geq |\eta_{t,j}|-r_{t,j}\ge \frac{\rho}{3}$.  Thus, by similar arguments as that used in \cite[Lemma~2.1]{WW21},
		\begin{eqnarray}\label{eq3003}
			\sum_{s=1}^\vt(1+r_{s,j})^{\frac{1-N}{2}}e^{-\sqrt{\lambda_{n_\tau}}r_{s,j}}
			\lesssim (1+r_{t,j})^{\frac{1-N}{2}}e^{-\sqrt{\lambda_{n_\tau}}r_{t,j}},
		\end{eqnarray}
		if $r_{t,j}\leq\frac{\widetilde{\eta}_{j}}{2}$ for some $t$,
		which, together with \eqref{eq3002}, implies
		\begin{eqnarray}\label{eq3004}
			|E_{j,1}|\lesssim\rho^{-\nu_j}(1+r_{t,j})^{-(\frac{N-1}{2}-\sigma)}e^{-\sqrt{\lambda_{n_\tau}}r_{t,j}}
		\end{eqnarray}
		for $|x|\geq\frac{\rho}{3}$ and $r_{t,j}\leq\frac{\widetilde{\eta}_{j}}{2}$ for some $t$.
		On the other hand, if $|x|\geq\frac{\rho}{3}$ and $r_{t,j}>\frac{\widetilde{\eta}_{j}}{2}$ for all $t$, then by \eqref{eq3002},
		\begin{equation}\label{eq3005}
			|E_{j,1}|\lesssim \rho^{-\nu_j}\bigg(\sum_{t=1}^{\vt}(1+r_{t,j})^{-(\frac{N-1}{2}-\sigma)}e^{-\sqrt{\lambda_{n_\tau}}r_{t,j}}\bigg).
		\end{equation}
		Thus by \eqref{eq3001}, \eqref{eq3004} and \eqref{eq3005}, we have \eqref{eqnnnew3001}.
		We next estimate $|E_{j,2}|$.  By \eqref{eq3003}, it is easy to see that
		\begin{eqnarray}\label{eq3007}
			|E_{j,2}|\sim\sum_{m\not=n}\widetilde{w}_{m,j}^2\widetilde{w}_{n,j}+\sum_{m\not=n,m\neq l,n\neq l}\widetilde{w}_{m,j}\widetilde{w}_{n,j}\widetilde{w}_{l,j}
			\sim\sum_{m\not=n}\widetilde{w}_{m,j}^2\widetilde{w}_{n,j}
		\end{eqnarray}
		if $r_{t,j}\leq\frac{\widetilde{\eta}_{j}}{2}$ for some $t$. In this case, $r_{s,j}\geq\frac{\widetilde{\eta}_{j}}{2}$ for all $s\not=t$.  Thus, similar as \eqref{eq3003}, by similar arguments as that used in \cite[Lemma~2.1]{WW21},
		\begin{eqnarray}
			\sum_{m\not=n; n,m\not=t}\widetilde{w}_{m,j}^2\widetilde{w}_{n,j}&\lesssim&\widetilde{\eta}_{j}^{\frac{3(1-N)}{2}}\sum_{m\not=n, n,m\not=t}e^{-\sqrt{\lambda_{n_\tau}}(2r_{m,j}+r_{n,j})}\notag\\
			&\lesssim&\widetilde{\eta}_{j}^{\frac{3(1-N)}{2}}(\sum_{m\not=t}e^{-\sqrt{\lambda_{n_\tau}}r_{m,j}})^2\sum_{n\not=t}e^{-\sqrt{\lambda_{n_\tau}}r_{n,j}}\notag\\
			&\lesssim&\widetilde{\eta}_{j}^{\frac{3(1-N)}{2}}e^{-\sqrt{\lambda_{n_\tau}}\widetilde{\eta}_{j}}e^{-\sqrt{\lambda_{n_\tau}}\frac{\widetilde{\eta}_{j}}{2}}\notag\\
			&\lesssim&\widetilde{\eta}_{j}^{1-N}e^{-\sqrt{\lambda_{n_\tau}}\widetilde{\eta}_{j}}(1+r_{t,j})^{-(\frac{N-1}{2}-\sigma)}e^{-\sqrt{\lambda_{n_\tau}}r_{t,j}}.\label{eq3008}
		\end{eqnarray}
		Similarly,
		\begin{eqnarray}
			\sum_{m\not=t}\widetilde{w}_{m,j}^2\widetilde{w}_{t,j}&\lesssim&(1+r_{t,j})^{\frac{1-N}{2}}e^{-\sqrt{\lambda_{n_\tau}}r_{t,j}}\sum_{m\not=t}r_{m,j}^{1-N}e^{-2\sqrt{\lambda_{n_\tau}}r_{m,j}}\notag\\
			&\lesssim&\widetilde{\eta}_{j}^{1-N}(\sum_{m\not=t}e^{-\sqrt{\lambda_{n_\tau}}r_{m,j}})^2(1+r_{t,j})^{\frac{1-N}{2}}e^{-\sqrt{\lambda_{n_\tau}}r_{t,j}}\notag\\
			&\lesssim&\widetilde{\eta}_{j}^{1-N}e^{-\sqrt{\lambda_{n_\tau}}\widetilde{\eta}_{j}}(1+r_{t,j})^{-(\frac{N-1}{2}-\sigma)}e^{-\sqrt{\lambda_{n_\tau}}r_{t,j}}\label{eq3009}
		\end{eqnarray}
		and
		\begin{eqnarray}
			\sum_{m\not=t}\widetilde{w}_{t,j}^2\widetilde{w}_{m,j}&\lesssim&(1+r_{t,j})^{1-N}e^{-\sqrt{\lambda_{n_\tau}}r_{t,j}}\sum_{m\not=t}r_{m,j}^{\frac{1-N}{2}}e^{-\sqrt{\lambda_{n_\tau}}(r_{m,j}+r_{t,j})}\notag\\
			&\lesssim&\widetilde{\eta}_{j}^{\frac{1-N}{2}}(\sum_{m\not=t}e^{-\sqrt{\lambda_{n_\tau}}|\eta_{m,j}-\eta_{t,j}|})(1+r_{t,j})^{1-N}e^{-\sqrt{\lambda_{n_\tau}}r_{t,j}}\notag\\
			&\lesssim&\widetilde{\eta}_{j}^{\frac{1-N}{2}}e^{-\sqrt{\lambda_{n_\tau}}\widetilde{\eta}_{j}}(1+r_{t,j})^{-(\frac{N-1}{2}-\sigma)}e^{-\sqrt{\lambda_{n_\tau}}r_{t,j}}.\label{eq3010}
		\end{eqnarray}
		Thus, by \eqref{eq3007},
		\begin{eqnarray*}
			|E_{j,2}|\lesssim \widetilde{\eta}_{j}^{\frac{1-N}{2}}e^{-\sqrt{\lambda_{n_\tau}}\widetilde{\eta}_{j}}(1+r_{t,j})^{-(\frac{N-1}{2}-\sigma)}e^{-\sqrt{\lambda_{n_\tau}}r_{t,j}}
		\end{eqnarray*}
		if $r_{t,j}\leq\frac{\widetilde{\eta}_{j}}{2}$ for some $t$.  If $r_{t,j}>\frac{\widetilde{\eta}_{j}}{2}$ for all $t$, then
		\begin{eqnarray}
			|E_{j,2}|&\lesssim& \big(\sum_{s=1}^\vt\widetilde{w}_{s,j}\big)^2\sum_{t=1}^\vt\widetilde{w}_{t,j}\notag\\
			&\lesssim&\widetilde{\eta}_{j}^{1-N}e^{-\sqrt{\lambda_{n_\tau}}\widetilde{\eta}_{j}}\sum_{t=1}^{\vartheta}(1+r_{t,j})^{-\frac{N-1}{2}}e^{-\sqrt{\lambda_{n_\tau}}r_{t,j}}\notag\\
			&\lesssim&\widetilde{\eta}_{j}^{1-N}e^{-\sqrt{\lambda_{n_\tau}}\widetilde{\eta}_{j}}
			\sum_{t=1}^{\vartheta}(1+r_{t,j})^{-(\frac{N-1}{2}-\sigma)}e^{-\sqrt{\lambda_{n_\tau}}r_{t,j}}.\label{eq3011}
		\end{eqnarray}
		Thus, by \eqref{eq3010} and \eqref{eq3011}, we have \eqref{eqnnnew3012}.  We finally estimate $|E_{j,3}|$.  Clearly, by \eqref{eq0013},
		\begin{eqnarray*}
			|E_{j,3}|&=&|E_{j,3,1}+E_{j,3,2}|\\
			&\lesssim&\sum_{i\in\mathfrak{n}_\tau,i\not=j}\sum_{n,m}\widetilde{w}_{m,i}^2\widetilde{w}_{n,j}
			+\sum_{i\in\mathfrak{n}_\tau',\tau\not=\tau'}\sum_{n,m}\widetilde{w}_{m,i}^2\widetilde{w}_{n,j},
		\end{eqnarray*}
		where
		\begin{eqnarray*}
			E_{j,3,1}=\sum_{i\in\mathfrak{n}_\tau,i\not=j}\sum_{n,m}\beta_{i,j}\widetilde{w}_{m,i}^2\widetilde{w}_{n,j}\quad\text{and}\quad
			E_{j,3,2}=\sum_{i\in\mathfrak{n}_\tau',\tau\not=\tau'}\sum_{n,m}\beta_{i,j}\widetilde{w}_{m,i}^2\widetilde{w}_{n,j}.
		\end{eqnarray*}
		For $E_{j,3,1}$, the estimate is similar to that of $E_{j,2}$.  The difference is that we shall use $\widehat{\eta}_{\tau}$ in stead of $\widetilde{\eta}_{j}$ according to the construction of $\xi_{t,j}$, where $\widehat{\eta}_{\tau}$ is given by \eqref{eq0010}.  Let us sketch the
		estimates of $E_{j,3,1}$.  If $r_{n,j}\leq\frac{\widehat{\eta}_{\tau}}{2}$ for some $n$, then $r_{m,i}\geq\frac{\widehat{\eta}_{\tau}}{2}$ for all $i\in\mathfrak{n}_\tau$ with $(m,i)\not=(n,j)$.  Now, by similar estimates of \eqref{eq3008},
		\begin{eqnarray}\label{eq3014}
			|E_{j,3,1}|&\lesssim&\sum_{i\in\mathfrak{n}_\tau,i\not=j}\sum_{l,m}\widetilde{w}_{m,i}^2\widetilde{w}_{l,j}\notag\\
			&\lesssim&\widehat{\eta}_{\tau}^{1-N}e^{-\sqrt{\lambda_{n_\tau}}\widehat{\eta}_{\tau}}(1+r_{n,j})^{-(\frac{N-1}{2}-\sigma)}e^{-\sqrt{\lambda_{n_\tau}}r_{n,j}}.
		\end{eqnarray}
		If $r_{m,i}\leq\frac{\widehat{\eta}_{\tau}}{2}$ for some $i$ and $m$ then $r_{n,l}\geq\frac{\widehat{\eta}_{\tau}}{2}$ for all $l\in\mathfrak{n}_\tau$ with $(n,l)\not=(m,i)$.  Then by similar estimates of \eqref{eq3010},
		\begin{eqnarray}\label{eq3015}
			|E_{j,3,1}|&\lesssim&\sum_{l\in\mathfrak{n}_\tau,l\not=j}\sum_{p=1}^\vt\widetilde{w}_{p,l}^2\sum_{n=1}^\vt\widetilde{w}_{n,j}\notag\\
			&\lesssim&\widetilde{w}_{m,i}^2\sum_{n=1}^{\vartheta}\widetilde{w}_{n,j}\notag\\
			&\lesssim&\widehat{\eta}_{\tau}^{\frac{1-N}{2}}e^{-\sqrt{\lambda_{n_\tau}}\widehat{\eta}_{\tau}}(1+r_{m,i})^{-(\frac{N-1}{2}-\sigma)}e^{-\sqrt{\lambda_{n_\tau}}r_{m,i}}.
		\end{eqnarray}
		If $r_{t,l}\geq\frac{\widehat{\eta}_{\tau}}{2}$ for all $t$ and $l$, then by similar estimates of \eqref{eq3011},
		\begin{eqnarray*}
			|E_{j,3,1}|\lesssim\widehat{\eta}_{\tau}^{1-N}e^{-\sqrt{\lambda_{n_\tau}}\widehat{\eta}_{\tau}}
			\sum_{t=1}^{\vartheta}(1+r_{t,j})^{-(\frac{N-1}{2}-\sigma)}e^{-\sqrt{\lambda_{n_\tau}}r_{t,j}},
		\end{eqnarray*}
		which, together with \eqref{eq3014} and \eqref{eq3015}, implies that
		\begin{eqnarray}\label{eq3016}
			|E_{j,3,1}|
			&\lesssim&\widehat{\eta}_{\tau}^{\frac{1-N}{2}}e^{-\sqrt{\lambda_{n_\tau}}\widehat{\eta}_{\tau}}
			\bigg(\sum_{i\in\mathfrak{n}_\tau}\sum_{t=1}^{\vartheta}(1+r_{t,i})^{-(\frac{N-1}{2}-\sigma)}e^{-\sqrt{\lambda_{n_\tau}}r_{t,i}}\chi_{\{r_{t,i}\leq\frac{\widehat{\eta}_{\tau}}{2}\}}\notag\\
			&&+\bigg(\sum_{t=1}^{\vartheta}(1+r_{t,j})^{-(\frac{N-1}{2}-\sigma)}e^{-\sqrt{\lambda_{n_\tau}}r_{t,j}}\bigg)\chi_{\cap_{i\in\mathfrak{n}_\tau}\cap_{t=1}^{\vartheta}\{r_{t,i}>\frac{\widehat{\eta}_{\tau}}{2}\}}\bigg).
		\end{eqnarray}
		For $E_{j,3,2}$, the estimate is slightly different from that of $E_{j,2}$ and $E_{j,3,1}$.  Note that $E_{j,3,2}$ exists only when $\lambda_{n_k}>\lambda_{n_1}$, thus, we always assume $\lambda_{n_k}>\lambda_{n_1}$ in estimating $|E_{j,3,2}|$. By \eqref{eq3017}, it is easy to check that $\delta<\frac{1}{2}$.
		If $r_{n,j}\leq\delta\widehat{\eta}$ for some $n$, then $r_{t,l}\geq(1-\delta)\widehat{\eta}$ for all $(t,l)\not=(n,j)$, where $\widehat{\eta}$ is given by \eqref{eq0011}.  Thus, by similar estimates of \eqref{eq3008} and \eqref{eq3009},
		\begin{eqnarray}
			|E_{j,3,2}|&\lesssim&\sum_{i\in\mathfrak{n}_\tau',\tau\not=\tau'}\sum_{l,m}\widetilde{w}_{m,i}^2\widetilde{w}_{l,j}\notag\\
			&\lesssim&\sum_{i\in\mathfrak{n}_\tau',\tau\not=\tau'}\sum_{m=1}^\vt\widetilde{w}_{m,i}^2\widetilde{w}_{n,j}+\sum_{i\in\mathfrak{n}_\tau',\tau\not=\tau'}
			\sum_{l,m;l\not=n}\widetilde{w}_{m,i}^2\widetilde{w}_{l,j}\notag\\
			&\lesssim&\widehat{\eta}^{1-N}e^{-2(1-\delta)\sqrt{\lambda_{n_1}}\widehat{\eta}}(1+r_{t,j})^{-(\frac{N-1}{2}-\sigma)}e^{-\sqrt{\lambda_{n_\tau}}r_{n,j}}.\label{eq3019}
		\end{eqnarray}
		If $r_{m,i}\leq\delta\widehat{\eta}$ for some $i$ and $m$ then $r_{p,l}\geq(1-\delta)\widehat{\eta}$ for all $(p,l)\not=(m,i)$. Moreover, by \eqref{eq3018},
		\begin{eqnarray*}
			\sqrt{\lambda_{n_\tau}}r_{n,j}+\varsigma\sqrt{\lambda_{n_\tau'}}r_{m,i}&\geq&((1-\delta)\sqrt{\lambda_{n_\tau}}+\varsigma\delta\sqrt{\lambda_{n_\tau'}})\widehat{\eta}\\
			&\geq&((1-\delta)\sqrt{\lambda_{n_2}}+\varsigma\delta\sqrt{\lambda_{n_1}})\widehat{\eta}\\
			&\geq&\sqrt{\lambda_{n_1}}\widehat{\eta}
		\end{eqnarray*}
		for all $n,m$ and $i,j$ with $\tau'<\tau$. Furthermore, if we suppose
		\begin{eqnarray*}
			|x-\eta_{t,j}|=\min\limits_{1\le n\le \vt} |x-\eta_{n,j}|
		\end{eqnarray*}
		without loss of generality, then by \eqref{eq3017} and similar estimates of \eqref{eq3003},
		\begin{eqnarray}
			|E_{j,3,2}|&\lesssim&\sum_{l\in\mathfrak{n}_\tau'',\tau''\not=\tau}\sum_{p=1}^\vt\widetilde{w}_{p,l}^2\widetilde{w}_{t,j}\notag\\
			&\lesssim&\sum_{l\in\mathfrak{n}_\tau'',\tau''<\tau}\sum_{p=1}^\vt\widetilde{w}_{p,l}^2\widetilde{w}_{t,j}+\sum_{l\in\mathfrak{n}_\tau'',\tau''>\tau}\sum_{p=1}^\vt\widetilde{w}_{p,l}^2\widetilde{w}_{t,j}\notag\\
			&\lesssim&\widehat{\eta}^{\frac{1-N}{2}}\sum_{l\in\mathfrak{n}_\tau'',\tau''<\tau}\sum_{p=1}^\vt(1+r_{p,l})^{\frac{1-N}{2}}e^{-(2-\varsigma)\sqrt{\lambda_{n_\tau'}}r_{p,l}}
			e^{-(\sqrt{\lambda_{n_\tau}}r_{t,j}+\varsigma\sqrt{\lambda_{n_\tau'}}r_{p,l})}\notag\\
			&&+\widehat{\eta}^{1-N}e^{-2(1-\delta)\sqrt{\lambda_{n_1}}\widehat{\eta}}(1+r_{m,i})^{\frac{1-N}{2}} e^{-\sqrt{\lambda_{n_\tau}}r_{t,j}}\notag\\
			&\lesssim&\widehat{\eta}^{\frac{1-N}{2}}e^{-\sqrt{\lambda_{n_1}}\widehat{\eta}}(1+r_{m,i})^{-(\frac{N-1}{2}-\sigma)}e^{-\sqrt{\lambda_{n_\tau}}r_{m,i}}.\label{eq3020}
		\end{eqnarray}
		If $r_{p,l}\geq\delta\widehat{\eta}$ for all $p$ and $l$ then by the construction of $\{\eta_{t,j}\}$ and \eqref{eq3017},
		\begin{eqnarray}
			2\sqrt{\lambda_{n_\tau'}}r_{m,i}+\sqrt{\lambda_{n_\tau}}r_{n,j}&\geq&\sqrt{\lambda_{n_\tau}}|\eta_{n,j}-\eta_{m,i}|
			+(2\sqrt{\lambda_{n_\tau'}}-\sqrt{\lambda_{n_\tau}})r_{m,i}\label{eq3021}\\
			&\geq&\sqrt{\lambda_{n_1}}|\eta_{n,j}-\eta_{m,i}|+\varsigma\sqrt{\lambda_{n_\tau'}} r_{m,i}.\notag
		\end{eqnarray}
		It follows that
		\begin{eqnarray}
			|E_{j,3,2}|&\lesssim&\sum_{l\in\mathfrak{n}_\tau'',\tau''\not=\tau}\sum_{n,p}\widetilde{w}_{p,l}^2\widetilde{w}_{n,j}\notag\\
			&\lesssim&\widehat{\eta}^{\frac{1-N}{2}}\sum_{m,i}(1+r_{m,i})^{1-N}e^{-\varsigma\sqrt{\lambda_{n_\tau'}}r_{m,i}}\sum_{n=1}^\vt e^{-\sqrt{\lambda_{n_1}}|\eta_{n,j}-\eta_{m,i}|}\notag\\
			&\lesssim&\widehat{\eta}^{\frac{1-N}{2}}e^{-\sqrt{\lambda_{n_1}}\widehat{\eta}}\sum_{m,i}(1+r_{m,i})^{1-N}e^{-\varsigma\sqrt{\lambda_{n_\tau'}}r_{m,i}}\notag\\
			&\lesssim& \widehat{\eta}^{\frac{1-N}{2}}e^{-\sqrt{\lambda_{n_1}}\widehat{\eta}}\sum_{m,i}(1+r_{m,i})^{1-N}e^{-\varepsilon\sqrt{\lambda_{n_\tau}}r_{m,i}},\label{eq3022}
		\end{eqnarray}
where we choose $\epsilon<\frac{\varsigma}{2}$.
		Thus, by \eqref{eq3019}, \eqref{eq3020} and \eqref{eq3022},
		\begin{eqnarray*}
			|E_{j,3,2}|&\lesssim&\widehat{\eta}^{\frac{1-N}{2}}e^{-\sqrt{\lambda_{n_1}}\widehat{\eta}}
			\bigg(\sum_{\tau=1}^{k}\sum_{i=n_{\tau-1}+1}^{n_{\tau}}\sum_{t=1}^{\vartheta}(1+r_{t,i})^{-(\frac{N-1}{2}-\sigma)}e^{-\sqrt{\lambda_{n_{\tau}}}r_{t,i}}
			\chi_{\{r_{t,i}\leq\delta\widehat{\eta}\}}\notag\\
			&&+\bigg(\sum_{\tau=1}^{k}\sum_{i=n_{\tau-1}+1}^{n_{\tau}}\sum_{t=1}^{\vartheta}(1+r_{t,i})^{\frac{1-N}{2}}e^{-\varepsilon\sqrt{\lambda_{n_{\tau}}}r_{t,i}}\bigg)\chi_{\cap_{i=1}^{d}\cap_{t=1}^{\vartheta}
				\{r_{t,i}\geq\delta\widehat{\eta}\}}\bigg),
		\end{eqnarray*}
		which, together with \eqref{eq3016}, implies that \eqref{eqnnnew3023} holds.
	\end{proof}

	\section{Linear theory}
	We introduce the norms
	\begin{eqnarray*}
		\|\varphi\|_{\natural,j}&=&\sum_{\tau=1}^{k}\sum_{i=n_{\tau-1}+1}^{n_{\tau}}\sum_{t=1}^{\vartheta}\sup|\varphi|(1+r_{t,i})^{(\frac{N-1}{2}-\sigma)}e^{\sqrt{\lambda_{n_\tau}}r_{t,i}}
		\chi_{\{r_{t,i}\leq\delta\widehat{\eta}\}}\\
		&&+\sup\frac{|\varphi|}{\sum_{\tau=1}^{k}\sum_{i=n_{\tau-1}+1}^{n_{\tau}}\sum_{t=1}^{\vartheta}(1+r_{t,i})^{\frac{1-N}{2}}e^{-\ve\sqrt{\lambda_{n_{\tau}}}r_{t,i}}}
		\chi_{\cap_{i=1}^d\cap_{t=1}^{\vartheta}\{r_{t,i}>\delta\widehat{\eta}\}}
	\end{eqnarray*}
	and
	\begin{eqnarray*}
		\|\varphi\|_{\sharp,j}&=&\sum_{\tau=1}^{k}\sum_{i=n_{\tau-1}+1}^{n_{\tau}}\sum_{t=1}^{\vartheta}\sup|\varphi|(1+r_{t,i})^{(\frac{N-1}{2}-\sigma-1)}e^{\sqrt{\lambda_{n_\tau}}r_{t,i}}
		\chi_{\{r_{t,i}\leq\delta\widehat{\eta}-1\}}\\
		&&+\sup\frac{|\varphi|}{\sum_{\tau=1}^{k}\sum_{i=n_{\tau-1}+1}^{n_{\tau}}\sum_{t=1}^{\vartheta}(1+r_{t,i})^{\frac{1-N}{2}}e^{-\ve\sqrt{\lambda_{n_{\tau}}}r_{t,i}}}
		\chi_{\cap_{i=1}^d\cap_{t=1}^{\vartheta}\{r_{t,i}>\delta\widehat{\eta}-1\}}.
	\end{eqnarray*}
	We also introduce the Banach spaces
	\begin{eqnarray*}
		\mathbb{X}^{\perp}=\prod_{j=1}^{d}\mathbb{X}_j^{\perp}\quad\text{and}\quad\mathbb{Y}^{\perp}=\prod_{j=1}^{d}\mathbb{Y}_j^{\perp},
	\end{eqnarray*}
	where
	\begin{eqnarray*}
		\mathbb{X}_j^{\perp}=\{u\in H^2(\bbr^N)\mid \|u\|_{\sharp,j}<+\infty,\int_{\bbr^N}\partial_{x_l}\widetilde{w}_{t,j}udx=0, u(x)=u(\Theta_t^{\pm}x)\text{ for all $l$ and $t$}\}
	\end{eqnarray*}
	and
	\begin{eqnarray*}
		\mathbb{Y}_j^{\perp}=\{u\in L^2(\bbr^N)\mid \|u\|_{\natural,j}<+\infty,\int_{\bbr^N}\partial_{x_l}\widetilde{w}_{t,j}udx=0, u(x)=u(\Theta_t^{\pm}x)\text{ for all $l$ and $t$}\}
	\end{eqnarray*}
	with
	\begin{eqnarray}\label{eq4001}
		\Theta_t^{\pm}=\left(\begin{matrix}\cos\frac{2(t-1)\pi}{\vartheta} & \sin\frac{2(t-1)\pi}{\vartheta} & 0\\
			-\sin\frac{2(t-1)\pi}{\vartheta} & \cos\frac{2(t-1)\pi}{\vartheta} & 0\\
			0& 0 & {\pm}I_{(N-2)\times(N-2)}\end{matrix}\right)
	\end{eqnarray}
	For $\mathbf{u}=(u_1,u_2,\cdots,u_d)\in \mathbb{X}^\perp, \mathbf{v}=(v_1,v_2,\cdots,v_d)\in\mathbb{Y}^\perp$, we define
	\begin{equation*}
		\|\mathbf{u}\|_{\sharp}=\sum_{j=1}^d\|u_j\|_{\sharp,j},\quad \|\mathbf{v}\|_{\natural}=\sum_{j=1}^d\|v_j\|_{\natural,j}.
	\end{equation*}
	Now we can state our main result in this section.
	\begin{proposition}\label{prop0001}
		Suppose the assumptions~$(V_1)$--$(V_2)$ hold and $\beta_{i,j}$ is not a eigenvalue of $-\Delta+\lambda_j$ in $L^2(\bbr^N; w_i^2)$, that is,
		\begin{eqnarray*}
			-\Delta v+\lambda_j v=\beta_{i,j}w_i^2v \quad\text{in }\bbr^N
		\end{eqnarray*}
		has no solutions in $H^2(\bbr^N)$, then
		for $\vartheta$ and $\rho\overline{\alpha}=\rho\min\limits_{1\le j\le d}\alpha_j$ sufficiently large,
		\begin{equation}\label{eq4002}
			\left\{\aligned&\mathcal{L}(\mathbf{v})=\mathbf{h},\quad\text{in }\bbr^N,\\
			&\mathbf{v}\in\mathbb{X}^{\perp},\endaligned\right.
		\end{equation}
		is unique solvable for every $\mathbf{h}=(h_1,h_2,\cdots,h_d)\in \mathbb{Y}^\perp$,
		where
		\begin{eqnarray*}
			\mathcal{L}(\mathbf{v})=(\mathcal{L}_1(\mathbf{v}),\mathcal{L}_2(\mathbf{v}),\cdots\mathcal{L}_d(\mathbf{v}))
		\end{eqnarray*}
		with
		\begin{eqnarray*}
			\mathcal{L}_j(\mathbf{v})=-\Delta v_j+V_j(x)v_j-3\mu_jW_j^2v_j-\sum_{i=1;i\not=j}^{d}\beta_{i,j}(W_i^2v_j+2W_iW_jv_i).
		\end{eqnarray*}
		Moreover, $\|\mathbf{v}\|_\sharp\lesssim\|\mathbf{h}\|_{\natural}$.
	\end{proposition}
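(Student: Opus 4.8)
The plan is to prove Proposition~\ref{prop0001} in two stages: an a priori estimate $\|\mathbf v\|_\sharp\lesssim\|\mathbf h\|_\natural$, valid uniformly once $\vartheta$ and $\rho\overline{\alpha}$ are large, and then existence and uniqueness via Fredholm theory. For the a priori estimate I would argue by contradiction: suppose there are $\vartheta_n\to\infty$, $\rho_n\overline{\alpha}_n\to\infty$, $\mathbf h_n\in\mathbb Y^\perp$ with $\|\mathbf h_n\|_\natural\to0$, and $\mathbf v_n\in\mathbb X^\perp$ solving $\mathcal L(\mathbf v_n)=\mathbf h_n$ with $\|\mathbf v_n\|_\sharp=1$. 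Away from all the bumps, on $\Omega_n=\bigcap_{i,t}\{r_{t,i}>\delta\widehat{\eta}-1\}$, the coefficients $W_j^2,W_i^2,W_iW_j$ are exponentially small in $\widehat{\eta}$ and, outside a fixed large ball, $V_j\to\lambda_j\geq\lambda_{n_1}>\ve^2\lambda_{n_k}$ for the fixed small $\ve$ of Lemma~\ref{lem0001} (by \eqref{eq3017} and $\ve<\tfrac12$); hence the comparison function
\[
\Psi_n=\sum_{\tau=1}^k\sum_{i=n_{\tau-1}+1}^{n_\tau}\sum_{t=1}^{\vartheta_n}(1+r_{t,i})^{\frac{1-N}{2}}e^{-\ve\sqrt{\lambda_{n_\tau}}r_{t,i}}
\]
obeys $(-\Delta+V_j-3\mu_jW_j^2-\sum_{i\neq j}\beta_{i,j}W_i^2)\Psi_n\gtrsim\Psi_n>0$ on $\Omega_n$. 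Treating $\sum_{i\neq j}2\beta_{i,j}W_iW_jv_{n,i}$ as a forcing term dominated by $o(1)(\max_l\sup_{\Omega_n}|v_{n,l}|/\Psi_n)\Psi_n$, using $|\mathbf h_n|\leq\|\mathbf h_n\|_\natural\Psi_n$, and noting that on $\partial\Omega_n$ the $\sharp$-norm forces $|\mathbf v_n|\lesssim e^{-(1-\ve)\sqrt{\lambda_{n_1}}\delta\widehat{\eta}}\Psi_n$, the maximum principle gives $\sup_{\Omega_n}|\mathbf v_n|/\Psi_n=o(1)$: the outer part of $\|\mathbf v_n\|_\sharp$ is negligible.

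Consequently the $\sharp$-mass concentrates near the bumps. By the $\tfrac{2\pi}{\vartheta_n}$-rotational invariance of $\mathbf v_n$ the weighted size of $v_{n,j}$ near every bump of component $j$ is the same, so after passing to a subsequence to fix a component index $j_*$ (whose contribution does not vanish) there are points $P_n$ with $|P_n-\eta_{1,j_*}|$ bounded and $c_0>0$ such that $|v_{n,j_*}(P_n)|\,(1+|P_n-\eta_{1,j_*}|)^{\frac{N-1}{2}-\sigma-1}e^{\sqrt{\lambda_{n_*}}|P_n-\eta_{1,j_*}|}\geq c_0$, where $n_*$ indexes the block of $j_*$. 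Put $\widehat{\mathbf v}_n(y)=\mathbf v_n(y+\eta_{1,j_*})$. By \eqref{eqnnnew0008} and \eqref{eq0009}--\eqref{eq0011}, after this translation $W_{j_*}\to w_{j_*}$ in $C^1_{\mathrm{loc}}(\bbr^N)$ while $W_i\to0$ in $C^1_{\mathrm{loc}}(\bbr^N)$ for every $i\neq j_*$, since all other bumps recede (their distances to $\eta_{1,j_*}$ being $\gtrsim\rho_n\widehat{\alpha}_n\to\infty$). Elliptic estimates and $\|\mathbf v_n\|_\sharp=1$ give uniform local $C^{1,\gamma}$ bounds and a uniform exponential decay, so along a subsequence $\widehat{\mathbf v}_n\to\widehat{\mathbf v}$ in $C^1_{\mathrm{loc}}$ with each $\widehat v_j\in H^2(\bbr^N)$. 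In the limit, for $j\neq j_*$ only the coupling with the bump at the origin survives, so $-\Delta\widehat v_j+\lambda_j\widehat v_j=\beta_{j_*,j}w_{j_*}^2\widehat v_j$ in $\bbr^N$, whence $\widehat v_j\equiv0$ by the hypothesis that $\beta_{j_*,j}$ is not an eigenvalue of $-\Delta+\lambda_j$ in $L^2(\bbr^N;w_{j_*}^2)$; and $-\Delta\widehat v_{j_*}+\lambda_{j_*}\widehat v_{j_*}=3\mu_{j_*}w_{j_*}^2\widehat v_{j_*}$ in $\bbr^N$, while the orthogonality relations $\int_{\bbr^N}\partial_{x_l}\widetilde{w}_{1,j_*}v_{n,j_*}\,dx=0$ and the $\Theta^\pm$-invariance pass to the limit; by the classical nondegeneracy of $w_{j_*}$ (kernel $=\mathrm{span}\{\partial_{x_l}w_{j_*}\}_{l=1}^N$) also $\widehat v_{j_*}\equiv0$. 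Thus $\widehat{\mathbf v}\equiv0$, contradicting $|\widehat v_{n,j_*}(P_n-\eta_{1,j_*})|\gtrsim c_0$ on a bounded set. This proves the a priori bound, and in particular uniqueness.

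For existence I would combine the a priori bound with Fredholm theory, as in \cite{WY10,PW13,PV22}. Write $\mathcal L(\mathbf v)=(-\Delta+\mathbf V)\mathbf v-\mathbf K\mathbf v$, where $\mathbf V=\mathrm{diag}(V_1,\dots,V_d)$ and $\mathbf K$ is multiplication by the continuous symmetric matrix-valued function assembled from $3\mu_jW_j^2$, $\beta_{i,j}W_i^2$ and $2\beta_{i,j}W_iW_j$. Since $V_j\geq c>0$ and is symmetric by $(V_1)$, the standard weighted estimates for $-\Delta+V_j$ make $-\Delta+\mathbf V$ an isomorphism between the corresponding symmetric weighted spaces — the one-power-of-$(1+r_{t,i})^{-1}$ gap between the $\natural$- and $\sharp$-weights is exactly what these estimates produce — and $\mathbf K(-\Delta+\mathbf V)^{-1}$ is compact because $\mathbf K$ decays exponentially off the bumps and the inversion gains two derivatives (Rellich on the bump regions, the exponential weights handling the tails). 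Thus \eqref{eq4002}, after the usual finite-dimensional Lyapunov--Schmidt reduction incorporating the orthogonality conditions (and their Lagrange multipliers), becomes a compact perturbation of the identity in the symmetric orthogonal class; injectivity is the a priori bound with $\mathbf h=0$, so the Fredholm alternative yields unique solvability, and the estimate $\|\mathbf v\|_\sharp\lesssim\|\mathbf h\|_\natural$ is again the a priori bound.

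I expect the main obstacle to be the a priori bound, and within it the bookkeeping of the several length scales: bumps of the same component at mutual distance $\widetilde{\eta}_j$, bumps of distinct components inside one block at distance $\widehat{\eta}_{\tau}$, and bumps in distinct blocks at distance $\widehat{\eta}$ with different decay rates $\sqrt{\lambda_{n_\tau}}$, so that the barrier $\Psi_n$ and the parameters $\varsigma,\delta,\ve$ of \eqref{eq3017}--\eqref{eq3018} must be chosen compatibly and the off-diagonal coupling $W_iW_jv_i$ absorbed uniformly. It is exactly here that the full eigenvalue hypothesis is forced, not merely a condition on the $\mu_j$: blowing up at a bump of component $j_*$ leaves every other component $j$ obeying the \emph{coupled} linear equation $-\Delta v_j+\lambda_jv_j=\beta_{j_*,j}w_{j_*}^2v_j$, so one needs each $\beta_{i,j}$ to avoid the spectrum of $-\Delta+\lambda_j$ in $L^2(\bbr^N;w_i^2)$.
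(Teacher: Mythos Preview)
Your overall architecture matches the paper's: an a~priori estimate $\|\mathbf v\|_\sharp\lesssim\|\mathbf h\|_\natural$ obtained by contradiction and blow-up, then existence by Fredholm. The outer barrier argument with $\Psi_n$ is essentially the paper's first step, and your use of the eigenvalue hypothesis is in the right spirit. But there is a real gap in the blow-up step, and a secondary omission in the annulus barrier.

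\textbf{The blow-up center is wrong.} You pick the component $j_*$ with $\|v_{n,j_*}\|_{\sharp,j_*}$ bounded below and then assert that the concentration point $P_n$ satisfies $|P_n-\eta_{1,j_*}|$ bounded. But look again at the definition of $\|\cdot\|_{\sharp,j}$: the inner part is a sum over \emph{all} bump centres $\eta_{t,i}$, $i=1,\dots,d$, not just $i=j$. Hence $\|v_{n,j_*}\|_{\sharp,j_*}\not\to0$ only guarantees that the weighted size of $v_{n,j_*}$ stays bounded below near \emph{some} centre $\eta_{1,i_*}$, and $i_*$ need not equal $j_*$. If $i_*\neq j_*$, your blow-up at $\eta_{1,j_*}$ gives no information where it is needed. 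The paper fixes this by blowing up $v_{j_0,n}$ at \emph{every} centre $\eta_{1,i}$: when $i=j_0$ the limit satisfies the linearised equation around $w_{j_0}$ and vanishes by nondegeneracy plus the orthogonality; when $i\neq j_0$ the limit satisfies
\[
-\Delta u+\lambda_{j_0}u=\beta_{i,j_0}\,w_i^2\,u\quad\text{in }\bbr^N,
\]
and vanishes by the eigenvalue hypothesis. This is the correct place where the full assumption on \emph{all} pairs $(i,j)$ enters: it is the single component $j_0$ near a \emph{foreign} bump that produces the eigenvalue equation, not the other components near $\eta_{1,j_*}$ as you wrote. Your argument only rules out concentration at $\eta_{1,j_*}$, which is not enough to contradict $\|v_{n,j_*}\|_{\sharp,j_*}=1$.

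\textbf{The intermediate annulus is not controlled.} Even after the outer region and the blow-up on compact balls $B_{R_0}(\eta_{1,i})$ are handled, you still need the decay $|v_j(x)|\lesssim(1+r_{t,i})^{-(\frac{N-1}{2}-\sigma-1)}e^{-\sqrt{\lambda_{n_\tau}}r_{t,i}}$ on the annulus $R_0\leq r_{t,i}\leq\delta\widehat{\eta}-1$, because that is what the inner part of $\|\cdot\|_{\sharp,j}$ demands. The outer barrier $\Psi_n$ only carries the weak rate $e^{-\ve\sqrt{\lambda_{n_\tau}}r_{t,i}}$ and cannot do this. The paper builds a second barrier $\phi_{j,i,t}=r_{t,i}^{-(\frac{N-1}{2}-\sigma)+1}e^{-(\sqrt{\lambda_{n_\tau}}-C\rho^{-\nu_j})r_{t,i}}$ on each annulus, glues it to $\varphi_\omega$ with an explicit $C^{1}\cap W^{2,\infty}$ cut-off, and applies the maximum principle to the glued supersolution $\widehat{\phi}_\omega$; this converts the quantity $\sup_{t,i,l}\|v_l\|_{L^\infty(\partial B_{R_0}(\eta_{t,i}))}$ into a bound on the full inner weight. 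Without this step your claim that ``$|P_n-\eta_{1,j_*}|$ is bounded'' is also unjustified, since the inner supremum could in principle be attained far out in the annulus.

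Both issues are repairable along the paper's lines, and your existence paragraph is fine once the a~priori estimate is in place.
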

	\begin{proof}
		We start the proof by showing that there exists $R_0>0$ large enough which is independent of $\vartheta$ and $\rho\overline{\alpha}$ sufficiently large such that
		\begin{eqnarray}\label{eq4004}
			|v_{j}(x)|\lesssim(\|\mathbf{h}\|_{\natural}+\sup_{t,i,l}\|v_{l}\|_{L^{\infty}(\partial B_{R_0}(\eta_{t,i}))})\bigg(
			\sum_{\tau,t,i}(1+r_{t,i})^{\frac{1-N}{2}}e^{-\ve\sqrt{\lambda_{n_{\tau}}}r_{t,i}}\bigg)
		\end{eqnarray}
		for $x\in\bbr^N\backslash\bigcup_{t,i}B_{R_0}(\eta_{t,i})$.  Indeed, by \eqref{eqnnnew0008}, it is well known that $\sigma(-\Delta+\lambda_j)=\{\sigma_l\}$ in $L^2(\bbr^N; w_i^2)$ with $\sigma_l\to+\infty$ as $l\to\infty$, where $\sigma(-\Delta+\lambda_j)$ is the spectrum of $-\Delta+\lambda_j$ in $L^2(\bbr^N; w_i^2)$.  Thus, $\sharp\{l\mid\sigma_l<\beta_{i,j}\}$ is finite.  Note that it is also well known that the Morse index of $w_j$ is one for all $j$.  Thus, by the construction of $\{\eta_{t,j}\}$, for $\vartheta$ and $\rho\overline{\alpha}$ sufficiently large, it is standard to use the Riesz representation theorem to show that $\mathcal{L}(\mathbf{v})=\mathbf{h}$ is uniquely solvable in $\prod_{j=1}^d(\mathcal{H}_j)^{\perp}$ for every $\mathbf{h}\in \mathbb{Y}^\perp$, where
		\begin{eqnarray*}
			\mathcal{H}_j=\{u\in H^1(\bbr^N)\mid \int_{\bbr^N}\partial_{x_l}\widetilde{w}_{t,j}udx=0, u(x)=u(\Theta_t^{\pm}x)\text{ for all $l$ and $t$}\}.
		\end{eqnarray*}
		Now, by elliptic estimates, we know that $v_j\in L^\infty(\bbr^N)$ for all $j$.  By the assumptions~$(V_1)$ and $(V_2)$, we can choose $c_j<\lambda_j$ such that $V_j(x)\geq 2c_j>0$ in $\bbr^N$ for all $j$.  By a direct computation,
		\begin{eqnarray}\label{eqnnnew0002}
			-\Delta\varphi_{\omega}+c_j\varphi_{\omega}\gtrsim\varphi_{\omega}\quad\text{in }\bbr^N\backslash B_{R_0}(\eta_{t,i})
		\end{eqnarray}
		for $R_0>0$ sufficiently large, where
		\begin{eqnarray*}
			\varphi_{\omega}=\sum_{\tau=1}^{k}\sum_{i=n_{\tau-1}+1}^{n_{\tau}}\sum_{t=1}^\vartheta((1+r_{t,i})^{\frac{1-N}{2}}e^{-\ve\sqrt{\lambda_{n_{\tau}}}r_{t,i}}+\omega e^{\ve\sqrt{\lambda_{n_{\tau}}}r_{t,i}}).
		\end{eqnarray*}
		Note that by \eqref{eqnnnew0008}, for $\vartheta>0$ sufficiently large,
		\begin{align}
			-\Delta |v_j|+c_j |v_j|\lesssim&(e^{-\sqrt{\lambda_1}R_0}\sup_{l}\|v_{l}\|_{L^{\infty}(\bbr^N\backslash\bigcup_{t,i}B_{R_0}(\eta_{t,i}))}+\|h_j\|_{\natural,j})\notag\\
			&\times\bigg(
			\sum_{\tau,t,i}(1+r_{t,i})^{\frac{1-N}{2}}e^{-\ve\sqrt{\lambda_{n_{\tau}}}r_{t,i}}\bigg)\label{eqnnnew0006}
		\end{align}
		in $\bbr^N\backslash\bigcup_{t,i}B_{R_0}(\eta_{t,i})$ for $R_0>0$ sufficiently large.  Thus, by the maximum principle,
		\begin{eqnarray}\label{eq4005}
			|v_j(x)|\lesssim\bigg(e^{-\sqrt{\lambda_1}R_0}\sup_{l}\|v_{l}\|_{L^{\infty}(\bbr^N\backslash\bigcup_{t,i}B_{R_0}(\eta_{t,i}))}+\|h_j\|_{\natural,j}+\sup_{t,i}\|v_{j}\|_{L^{\infty}(\partial B_{R_0}(\eta_{t,i}))}\bigg)\varphi_{\omega},
		\end{eqnarray}
		in $\bbr^N\backslash\bigcup_{t,i}B_{R_0}(\eta_{t,i})$ for every $j$, where we take $\ve>0$ sufficiently small if necessary.  In particular, by letting $\omega\to0$, we know that
		\begin{eqnarray*}
			\sup_{l}\|v_{l}\|_{L^{\infty}(\bbr^N\backslash\bigcup_{t,i}B_{R_0}(\eta_{t,i}))}
			&\lesssim& e^{-\sqrt{\lambda_1}R_0}\sup_{l}\|v_{l}\|_{L^{\infty}(\bbr^N\backslash\bigcup_{t,i}B_{R_0}(\eta_{t,i}))}\\
			&&+\|\mathbf{h}\|_{\natural}+\sup_{t,i,l}\|v_{l}\|_{L^{\infty}(\partial B_{R_0}(\eta_{t,i}))},
		\end{eqnarray*}
		which implies
		\begin{eqnarray*}
			\sup_{l}\|v_{l}\|_{L^{\infty}(\bbr^N\backslash\bigcup_{t,i}B_{R_0}(\eta_{t,i}))}\lesssim\sup_{t,i,l}\|v_{l}\|_{L^{\infty}(\partial B_{R_0}(\eta_{t,i}))}+\|\mathbf{h}\|_{\natural}.
		\end{eqnarray*}
		It follows from letting $\omega\to0$ in \eqref{eq4005} once more that \eqref{eq4004} holds.  Recall that by \eqref{eq3017} and \eqref{eq3018}, $\delta<\frac12$. Thus, we have $|x|\sim\rho$ in every $B_{\sigma\widehat{\eta}}(\eta_{t,i})$.  It follows from the assumption~$(V_2)$ and \eqref{eq4004} that
		\begin{align}
			&-\Delta |v_j|+(\lambda_j-C\rho^{-\nu_j})|v_j|\notag\\
			&\lesssim\Big(|v_j|\sum_{l=1}^d W_l^2+W_j\sum_{l\not=j}W_l|v_l|+|h_j|\Big)\notag\\
			&\lesssim\Big(e^{-\sqrt{\lambda_1}R_0}\sup_{t,i,l}\|v_{l}\|_{L^{\infty}(\partial B_{R_0}(\eta_{t,i}))}+\|\mathbf{h}\|_{\natural}\Big)
			\bigg(\sum_{\tau=1}^{k}\sum_{l=n_{\tau-1}+1}^{n_{\tau}}\sum_{p=1}^\vartheta\frac{e^{-\sqrt{\lambda_{n_{\tau}}}r_{p,l}}}{(1+r_{p,l})^{(\frac{N-1}{2}-\sigma)}}\bigg)\notag\\
			&\lesssim(e^{-\sqrt{\lambda_1}R_0}\sup_{t,i,l}\|v_{l}\|_{L^{\infty}(\partial B_{R_0}(\eta_{t,i}))}+\|\mathbf{h}\|_{\natural})\frac{e^{-\sqrt{\lambda_{n_{\tau}}}r_{t,i}}}{(1+r_{t,i})^{(\frac{N-1}{2}-\sigma)}}\label{eqnnnew0007}
		\end{align}
		in every $B_{\delta\widehat{\eta}}(\eta_{t,i})\backslash B_{R_0}(\eta_{t,i})$ for $R_0>0$ sufficiently large.  Here, we also apply similar arguments of \eqref{eq3003} for the last inequality.
		For every $\tau$ and every $j\in \mathbf{n}_\tau$, we define
		\begin{eqnarray*}
			\phi_{j,i,t}=r_{t,i}^{-(\frac{N-1}{2}-\sigma)+1}e^{{-(\sqrt{\lambda_{n_\tau}}-C\rho^{-\nu_j})r_{t,i}}}
		\end{eqnarray*}
		for all $i=1,2,\cdots,d$ and $t=1,2,\cdots,\vartheta$.  Then by direct computations,
		\begin{eqnarray}\label{eqnnnew0003}
			-\Delta\phi_{j,i,t}+(\lambda_j-C\rho^{-\nu_j})\phi_{j,i,t}\gtrsim\frac{\phi_{j,i,t}}{r_{t,i}}>0\quad\text{in }\bbr^N\backslash B_{R_0}(\eta_{t,i})
		\end{eqnarray}
		for $R_0>0$ sufficiently large.  Let $\psi_1(t)$ and $\psi_2(t)$ be unique solutions of the following ODEs, respectively,
		\begin{eqnarray*}
			\left\{\aligned&-\psi_{1}''-2\ve\psi_{1}'+\ve^2\psi_{1}=1,\\
			&\psi_{1}(0)=1,\quad \psi_{1}'(0)=0
			\endaligned\right.
		\end{eqnarray*}
		and
		\begin{eqnarray*}
			\left\{\aligned&-\psi_{2}''-2\ve\psi_{2}'+\ve^2\psi_{2}=1,\\
			&\psi_{2}(1)=0,\quad \psi_{2}'(1)=0,
			\endaligned\right.
		\end{eqnarray*}
		where $\ve>0$ is sufficiently small.
		Then
		\begin{eqnarray*}
			\psi_1(r)=\ve^{-2}-\frac{\ve^{-2}-1}{2\sqrt{2}}((\sqrt{2}-1)e^{-(\sqrt{2}+1)\ve r}+(\sqrt{2}+1)e^{(\sqrt{2}-1)\ve r})
		\end{eqnarray*}
		and
		\begin{eqnarray*}
			\psi_2(r)=\ve^{-2}-\frac{\ve^{-2}}{2\sqrt{2}}(\frac{\sqrt{2}-1}{e^{-(\sqrt{2}+1)\ve}}e^{-(\sqrt{2}+1)\ve r}+\frac{\sqrt{2}+1}{e^{(\sqrt{2}-1)\ve}}e^{(\sqrt{2}-1)\ve r}).
		\end{eqnarray*}
		It is easy to see that $\psi_1'(r)<0$ and $\psi_2'(r)<0$ for $r\in(0, 1)$ and there exists $r_0\in(0, 1)$ such that $\psi_1'(r_0)=\psi_{2}'(r_0)$.  Now, let
		\begin{eqnarray*}
			\psi(t)=\left\{\aligned&\psi_1(0)-\psi_{1}(r_0)+\psi_2(r_0), \quad t\leq0,\\
			&\psi_1(t)-\psi_1(r_0)+\psi_2(r_0), \quad 0<t\leq r_0,\\
			&\psi_2(t),\quad r_0<t<1,\\
			&0, \quad t\geq1,\endaligned\right.
		\end{eqnarray*}
		then $\psi(t)\in L^\infty(\bbr)\cap C^1(\bbr)\cap W^{2,\infty}(\bbr)$ is a cut-off function.
		Let
		\begin{eqnarray*}
			\widehat{\phi}_{\omega}=\sum_{i,t}\phi_{j,i,t}\psi(r_{t,i}-\delta\widehat{\eta}+1)+(\psi_1(0)-\psi_{1}(r_0)+\psi_2(r_0)-\sum_{i,t}\psi(r_{t,i}-\delta\widehat{\eta}+1))\varphi_{\omega}.
		\end{eqnarray*}
		Clearly, $supp(\psi(r_{t,i}-\delta\widehat{\eta}+1))\cap supp(\psi(r_{s,j}-\delta\widehat{\eta}+1))=\emptyset$ for all $(t,i)\not=(s,j)$.
		Since $\phi_{j,i,t}=o(1)\varphi_{\omega}$ in $supp(\psi(r_{t,i}-\delta\widehat{\eta}+1))$ for every $i$ and $t$, by \eqref{eqnnnew0002} and \eqref{eqnnnew0003},
		\begin{eqnarray}\label{eqnnnew0005}
			\left\{\aligned&-\Delta\widehat{\phi}_{\omega}+c_j\widehat{\phi}_{\omega}\gtrsim\varphi_{\omega},\quad \text{a.e. }r_{t,i}\geq\delta\widehat{\eta}-1\text{ for all }(t,i),\\
			&-\Delta\widehat{\phi}_{\omega}+(\lambda_j-C\rho^{-\nu_j})\widehat{\phi}_{\omega}\gtrsim\frac{\phi_{j,i,t}}{r_{t,i}},\quad \text{a.e. }R_0\leq r_{t,i}\leq\delta\widehat{\eta}-1\text{ for some }(t,i).
			\endaligned\right.
		\end{eqnarray}
		Thus, by \eqref{eq4004}, \eqref{eqnnnew0006}, \eqref{eqnnnew0007} and \eqref{eqnnnew0005}, we can apply the maximum principle to show that
		\begin{eqnarray*}
			|v_{j}(x)|&\lesssim&\Big(\sup_{t,i,l}\|v_{l}\|_{L^{\infty}(\partial B_{R_0}(\eta_{t,i}))}+\|\mathbf{h}\|_{\natural}\Big)\widehat{\phi}_{\omega}
		\end{eqnarray*}
		in $\bbr^N\backslash\bigcup_{t,i}B_{R_0}(\eta_{t,i})$, which, together with letting $\omega\to0$, implies that
		\begin{eqnarray}
			|v_{j}(x)|
			&\lesssim&\Big(\sup_{t,i,l}\|v_{l}\|_{L^{\infty}(\partial B_{R_0}(\eta_{t,i}))}+\|\mathbf{h}\|_{\natural}\Big)\bigg(\sum_{\tau,t,i}(1+r_{t,i})^{-(\frac{N-1}{2}-\sigma-1)}e^{-\sqrt{\lambda_{n_\tau}}r_{t,i}}
			\chi_{\{R_0\leq r_{t,i}\leq\delta\widehat{\eta}-1\}}\notag\\
			&&+\sum_{\tau=1}^{k}\sum_{i=n_{\tau-1}+1}^{n_{\tau}}\sum_{t=1}^{\vartheta}(1+r_{t,i})^{{\frac{1-N}{2}}}e^{-\ve\sqrt{\lambda_{n_{\tau}}}r_{t,i}}\bigg)\chi_{\cap_{i=1}^{d}\cap_{t=1}^{\vartheta}
				\{r_{t,i}\geq\delta\widehat{\eta}-1\}}\bigg)\label{eq4007}
		\end{eqnarray}
		in $\bbr^N\backslash\bigcup_{t,i}B_{R_0}(\eta_{t,i})$.
		Now, we can use the blow-up arguments to establish the a-priori estimate $\|\mathbf{v}\|_\sharp\lesssim\|\mathbf{h}\|_{\natural}$.
		We assume the contrary that there exists $\vartheta_{n}\to+\infty$ and $\rho_n\overline{\alpha}_n\to+\infty$ as $n\to\infty$, and $\mathbf{h}_n\in\mathbb{Y}^{\perp}$ and $\mathbf{v}_n\in\mathbb{X}^{\perp}$ such that
		\begin{equation*}\label{eq0018}
			\mathcal{L}_n(\mathbf{v}_n)=\mathbf{h}_n \quad\text{in }\bbr^N
		\end{equation*}
		with $\|h_{j,n}\|_{\natural,j}\to0$ for all $j$  as $n\to\infty$ and there exists $j_0$ such that
		\begin{eqnarray*}
			\|v_{j_0,n}\|_{\sharp,j_0}=\max\limits_{1\le j\le d} \|v_{j,n}\|_{\sharp,j}=1.
		\end{eqnarray*}
		Since $|\eta_{t,j}^n|=\rho_{j,n}\to+\infty$ for all $t,j$, we have $|x|\geq\frac{\widehat{\eta}^n}{2}\to+\infty$ and $r_{s,l}^n\geq\frac{\widehat{\eta}^n}{2}\to+\infty$ for all $(s,l)\not=(m,i)$ if there exists $1\leq m\leq \vartheta_n$ and $\mathfrak{n}_{\tau-1}<i\leq\mathfrak{n}_\tau$ for some $\tau$ such that $r_{m,i}\leq\frac{\widehat{\eta}^n}{2}$.  Clearly, $\{v_{j_0,n}(\cdot+\eta_{1,i}^n)\}$ is bounded in $L^\infty_{loc}(\bbr^N)$ for all $i$.  Thus, by elliptic regularity estimates and Ascoli-Arzela's theorem, it is standard to show that $\widetilde{v}_{j_0,i,n}\to{v}_{j_0,i,\infty}$ uniformly on every compact set of $\bbr^N$, where
		\begin{eqnarray*}
			\widetilde{v}_{j_0,i,n}(x)=v_{j_0,n}(x+\eta_{1,i}^n).
		\end{eqnarray*}
		Moreover, if $i=j_0$, then ${v}_{j_0,j_0,\infty}$ is a bounded (weak and then strong) solution of the equation
		\begin{eqnarray}\label{eq4010}
			-\Delta u+\lambda_{j_0}u-3\mu_{j_0}w_{j_0}^2u=0 \quad\text{in }\bbr^N,
		\end{eqnarray}
		while if $i\not=j_0$, then ${v}_{j_0,i,\infty}$ is a bounded (weak and then strong) solution of the equation
		\begin{eqnarray}\label{eq4011}
			-\Delta u+\lambda_{j_0}u-\beta_{i,j}w_i^2u=0 \quad\text{in }\bbr^N.
		\end{eqnarray}
		For the sake of simplicity, we denote ${v}_{j_0,j_0,\infty}$ by ${v}_{j_0,\infty}$.
		Since $w_{j_0}$ is non-degenerate in $H^2(\bbr^N)$, by \eqref{eq4010}, ${v}_{j_0,\infty}=\sum\limits_{l=1}^Nc_l\partial_{x_l}w_{j_0}$.
		Note that by passing to the limit in the orthogonal conditions of $v_{j_0,n}$, we have
		\begin{eqnarray*}
			\int_{\bbr^N}(\partial_{x_l}w_{j_0}){v}_{j_0,\infty}dx=0\quad\text{for all }l=1,2,\cdots,N.
		\end{eqnarray*}
		Thus, we must have ${v}_{j_0,\infty}=0$, which implies that $v_{j_0,n}(\cdot+\eta_{1,j_0}^n)\to0$ uniformly on every compact set of $\bbr^N$.  On the other hand, from the assumption on $\beta_{i,j}$ by \eqref{eq4011}, we also must have ${v}_{j_0,i,\infty}=0$ for all $i\not=j_0$, which implies that $v_{j_0,n}(\cdot+\eta_{1,i}^n)\to0$ uniformly in every compact set of $\bbr^N$ and all $i\not=j_0$. In both cases,
		since $v_{j_0,n}$ is invariant under the action of the group $\Theta_t$ for all $t$, where $\Theta_t$ is given by \eqref{eq4001}, we have $v_{j_0,n}(\cdot+\eta_{t,i}^n)\to0$ uniformly on every compact set of $\bbr^N$ for all $t,i$, which, together with \eqref{eq4007}, implies that $\|v_{j_0,n}\|_{\sharp,j_0}=o_n(1)$.  It is impossible since $\|v_{j_0,n}\|_{\sharp,j_0}=1$.  Thus, we have proved the a-priori estimate $\|\mathbf{v}\|_\sharp\lesssim\|\mathbf{h}\|_{\natural}$.
		Finally, the solvability and uniqueness of \eqref{eq4002}  follows from the a-priori estimate $\|\mathbf{v}\|_\sharp\lesssim\|\mathbf{h}\|_{\natural}$ and the Fredholm alternative.
	\end{proof}
	
	\section{The ansatz and the nonlinear problem}
	Let us consider the linear equation:
	\begin{equation}\label{eqnew9988}
		\left\{\aligned&\mathcal{L}(\mathbf{v})=\mathbf{E}_{3}^{\perp},\quad\text{in }\bbr^N,\\
		&\mathbf{v}\in\mathbb{X}^{\perp},\endaligned\right.
	\end{equation}
	where $\mathbf{E}_{3}^{\perp}=(E_{1,3}^{\perp},E_{2,3}^{\perp},\cdots,E_{d,3}^{\perp})$, $E_{j,3}^{\perp}=E_{j,3}-\sum_{l=1}^{N}\sum_{t=1}^{\vartheta}\gamma_{l,t,j}\partial_{x_l}\widetilde{w}_{t,j}$ with $\gamma_{l,t,j}\in\bbr$.
	\begin{lemma}\label{lem0002}
		Suppose the assumptions~$(V_1)$--$(V_2)$ hold and $\beta_{i,j}$ is not a eigenvalue of $-\Delta+\lambda_j$ in $L^2(\bbr^N; w_i^2)$, that is,
		\begin{eqnarray*}
			-\Delta v+\lambda_j v=\beta_{i,j}w_i^2v \quad\text{in }\bbr^N
		\end{eqnarray*}
		has no solutions in $H^2(\bbr^N)$, then
		for $\vartheta$ and $\rho\overline{\alpha}=\rho\min\limits_{1\le j\le d}\alpha_j$ sufficiently large,
		\begin{eqnarray}\label{eqnn0007}
			E_{j,3}^{\perp}=E_{j,3}-\gamma_{\theta_{j}}\partial_{\theta_{j}}W_j-\gamma_{\rho_j}\partial_{\rho_j}W_j
		\end{eqnarray}
		where
		\begin{eqnarray}\label{eqn2038}
			\gamma_{\theta_{j}}=\frac{\|\partial_{\rho_j}W_j\|_{L^2(\bbr^N)}^2\int_{\bbr^N}E_{j,3}\partial_{\theta_{j}}W_jdx-\int_{\bbr^N}\partial_{\theta_{j}}W_j\partial_{\rho_j}W_jdx
				\int_{\bbr^N}E_{j,3}\partial_{\rho_j}W_jdx}{\|\partial_{\theta_{j}}W_j\|_{L^2(\bbr^N)}^2\|\partial_{\rho_j}W_j\|_{L^2(\bbr^N)}^2-
				(\int_{\bbr^N}\partial_{\theta_{j}}W_j\partial_{\rho_j}W_jdx)^2}
		\end{eqnarray}
		and
		\begin{eqnarray}\label{eqnnnew2038}
			\gamma_{\rho_j}=\frac{\|\partial_{\theta_{j}}W_j\|_{L^2(\bbr^N)}^2\int_{\bbr^N}E_{j,3}\partial_{\rho_j}W_jdx-\int_{\bbr^N}\partial_{\theta_{j}}W_j\partial_{\rho_j}W_jdx
				\int_{\bbr^N}E_{j,3}\partial_{\theta_{j}}W_jdx}{\|\partial_{\theta_{j}}W_j\|_{L^2(\bbr^N)}^2\|\partial_{\rho_j}W_j\|_{L^2(\bbr^N)}^2-
				(\int_{\bbr^N}\partial_{\theta_{j}}W_j\partial_{\rho_j}W_jdx)^2}
		\end{eqnarray}
		with
		\begin{eqnarray}\label{eqnn0005}
			\|\partial_{\rho_j}W_j\|_{L^2(\bbr^N)}^2\sim\vartheta\quad\text{and}\quad\|\partial_{\theta_{j}}W_j\|_{L^2(\bbr^N)}^2\sim\rho_j^2\vartheta.
		\end{eqnarray}
		Moreover, \eqref{eqnew9988} has a unique solution $\mathbf{Q}_*$ satisfying
		\begin{eqnarray}\label{eqn2040}
			\|\mathbf{Q}_*\|_\sharp\lesssim\widehat{\eta}^{\frac{1-N}{2}}e^{-\sqrt{\lambda_{n_1}}\widehat{\eta}}.
		\end{eqnarray}
	\end{lemma}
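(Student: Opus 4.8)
The plan is to read off the solvability of \eqref{eqnew9988} and the bound \eqref{eqn2040} from Proposition~\ref{prop0001}, once the projection term in \eqref{eqnn0007} has been put in the stated two–parameter form and its size controlled. First I would reduce the orthogonality conditions defining $\mathbb{X}^{\perp},\mathbb{Y}^{\perp}$ by symmetry. Since $E_{j,3}=\sum_{i\neq j}\beta_{i,j}W_i^2W_j$ and each $W_i$ is invariant under the group generated by $\{\Theta_t^{\pm}\}$ (the point set $\{\eta_{s,i}\}_s$ is carried onto itself by the planar rotation $\Theta_t^{+}$ and lies in the $x'$–plane, so it is also fixed by the reflection in $x''$), the error $E_{j,3}$ is $\Theta_t^{\pm}$–invariant. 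For any $\Theta_t^{\pm}$–invariant $\varphi$, the identity $\widetilde{w}_{t,j}(x)=\widetilde{w}_{1,j}((\Theta_t^{+})^{-1}x)$ and the change of variables $y=(\Theta_t^{+})^{-1}x$ reduce $\int_{\bbr^N}\partial_{x_l}\widetilde{w}_{t,j}\varphi\,dx=0$ over all $t$ to those with $t=1$; the conditions with $l\geq3$ are automatic because $\widetilde{w}_{1,j}$ is even in $x''$ while $\varphi$ is even in $x''$; and the two surviving directions $\partial_{x_1}\widetilde{w}_{1,j},\partial_{x_2}\widetilde{w}_{1,j}$ span the same plane as $\partial_{\rho_j}\widetilde{w}_{1,j},\partial_{\theta_{j}}\widetilde{w}_{1,j}$, which (again by $\Theta_t^{+}$–invariance of $\varphi$) is equivalent to $\partial_{\rho_j}W_j,\partial_{\theta_{j}}W_j$. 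Hence $E_{j,3}^{\perp}$ is $E_{j,3}$ minus its $L^2$–orthogonal projection onto $\mathrm{span}\{\partial_{\rho_j}W_j,\partial_{\theta_{j}}W_j\}$, and solving the resulting $2\times2$ Gram system by Cramer's rule yields precisely \eqref{eqn2038}--\eqref{eqnnnew2038}.

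Next I would compute the Gram data. Writing $\partial_{\rho_j}W_j=-\sum_t\mathbf{e}_{r,t}\!\cdot\!\nabla w_j(\cdot-\eta_{t,j})$ and $\partial_{\theta_{j}}W_j=-\rho_j\sum_t\mathbf{e}_{\theta,t}\!\cdot\!\nabla w_j(\cdot-\eta_{t,j})$ with $\{\mathbf{e}_{r,t},\mathbf{e}_{\theta,t}\}$ orthonormal, the diagonal ($t=s$) contributions dominate since $\widehat{\eta}\to+\infty$; as $w_j$ is radial one has $\int_{\bbr^N}\partial_aw_j\,\partial_bw_j=\frac1N\|\nabla w_j\|_{L^2}^2\delta_{ab}$, which gives \eqref{eqnn0005} and, using $\mathbf{e}_{r,t}\perp\mathbf{e}_{\theta,t}$, that the same–bump part of $\int_{\bbr^N}\partial_{\theta_{j}}W_j\partial_{\rho_j}W_j$ vanishes, so $|\int_{\bbr^N}\partial_{\theta_{j}}W_j\partial_{\rho_j}W_j|$ is negligible compared with $\rho_j\vartheta$. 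Hence the denominator in \eqref{eqn2038}--\eqref{eqnnnew2038} is $\sim\rho_j^2\vartheta^2$ and $\gamma_{\theta_{j}},\gamma_{\rho_j}$ are well defined. Using \eqref{eqnnnew3023} together with $|\partial_{\rho_j}W_j|\lesssim\sum_t(1+r_{t,j})^{\frac{1-N}{2}}e^{-\sqrt{\lambda_{n_\tau}}r_{t,j}}$ and $|\partial_{\theta_{j}}W_j|\lesssim\rho_j\sum_t(1+r_{t,j})^{\frac{1-N}{2}}e^{-\sqrt{\lambda_{n_\tau}}r_{t,j}}$, each of the $\vartheta$ ``aligned'' bump–bump integrals is a fixed finite constant and the rest are negligible, so that $|\int_{\bbr^N}E_{j,3}\partial_{\theta_{j}}W_j|\lesssim\vartheta\rho_j\,\widehat{\eta}^{\frac{1-N}{2}}e^{-\sqrt{\lambda_{n_1}}\widehat{\eta}}$ and $|\int_{\bbr^N}E_{j,3}\partial_{\rho_j}W_j|\lesssim\vartheta\,\widehat{\eta}^{\frac{1-N}{2}}e^{-\sqrt{\lambda_{n_1}}\widehat{\eta}}$. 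Substituting these and the Gram estimates into \eqref{eqn2038}--\eqref{eqnnnew2038} gives $|\gamma_{\theta_{j}}|\lesssim\rho_j^{-1}\widehat{\eta}^{\frac{1-N}{2}}e^{-\sqrt{\lambda_{n_1}}\widehat{\eta}}$ and $|\gamma_{\rho_j}|\lesssim\widehat{\eta}^{\frac{1-N}{2}}e^{-\sqrt{\lambda_{n_1}}\widehat{\eta}}$.

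With these in hand I would bound $\|\mathbf{E}_3^{\perp}\|_{\natural}$. From \eqref{eqnnnew3023}, and since $\delta<\tfrac12$ (so at most one bump region $\{r_{t,i}\leq\delta\widehat{\eta}\}$ is active at any point), the exponential weights defining $\|\cdot\|_{\natural,j}$ cancel the profile of $E_{j,3}$ bump by bump, giving $\|E_{j,3}\|_{\natural,j}\lesssim\widehat{\eta}^{\frac{1-N}{2}}e^{-\sqrt{\lambda_{n_1}}\widehat{\eta}}$; the same cancellation gives $\|\partial_{\rho_j}W_j\|_{\natural,j}\lesssim1$ and $\|\partial_{\theta_{j}}W_j\|_{\natural,j}\lesssim\rho_j$. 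The crucial point is that the gain $\rho_j^{-1}$ in $\gamma_{\theta_{j}}$ exactly absorbs the growth $\|\partial_{\theta_{j}}W_j\|_{\natural,j}\lesssim\rho_j$, so $\|\gamma_{\theta_{j}}\partial_{\theta_{j}}W_j\|_{\natural,j}+\|\gamma_{\rho_j}\partial_{\rho_j}W_j\|_{\natural,j}\lesssim\widehat{\eta}^{\frac{1-N}{2}}e^{-\sqrt{\lambda_{n_1}}\widehat{\eta}}$ and therefore $\|\mathbf{E}_3^{\perp}\|_{\natural}\lesssim\widehat{\eta}^{\frac{1-N}{2}}e^{-\sqrt{\lambda_{n_1}}\widehat{\eta}}$. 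Finally $\mathbf{E}_3^{\perp}\in\mathbb{Y}^{\perp}$: it has finite $\|\cdot\|_{\natural}$ norm, it is orthogonal to all $\partial_{x_l}\widetilde{w}_{t,j}$ by construction, and it is $\Theta_t^{\pm}$–invariant since $E_{j,3}$ and $W_j$ are. Proposition~\ref{prop0001} applied with $\mathbf{h}=\mathbf{E}_3^{\perp}$ then produces a unique $\mathbf{Q}_*\in\mathbb{X}^{\perp}$ solving \eqref{eqnew9988} with $\|\mathbf{Q}_*\|_{\sharp}\lesssim\|\mathbf{E}_3^{\perp}\|_{\natural}\lesssim\widehat{\eta}^{\frac{1-N}{2}}e^{-\sqrt{\lambda_{n_1}}\widehat{\eta}}$, which is \eqref{eqn2040}.

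The main obstacle is this last chain of estimates: one must show that the correction $\gamma_{\theta_{j}}\partial_{\theta_{j}}W_j$ is genuinely of order $\widehat{\eta}^{\frac{1-N}{2}}e^{-\sqrt{\lambda_{n_1}}\widehat{\eta}}$ and no larger, even though $\|\partial_{\theta_{j}}W_j\|_{\natural,j}\sim\rho_j\sim\vartheta\log\vartheta\to+\infty$. This forces one to extract the extra factor $\rho_j^{-1}$ in $\gamma_{\theta_{j}}$, which in turn requires the Gram determinant to be of the full size $\rho_j^2\vartheta^2$ rather than degenerate; that non-degeneracy rests on the geometric fact that the radial and tangential translation modes of each bump are $L^2$–orthogonal, and on tracking the precise cancellation between the decay of $E_{j,3}$ and the exponential weights in $\|\cdot\|_{\natural}$ — where the pinching condition $\lambda_{n_k}<4\lambda_{n_1}$ (through \eqref{eqnnnew3023}) and the admissible range \eqref{eq3018} for $\delta$ enter.
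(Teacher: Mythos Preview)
Your proposal is correct and follows essentially the same route as the paper: reduce the orthogonality conditions by $\Theta_t^{\pm}$–invariance to the two directions $\partial_{\rho_j}W_j,\partial_{\theta_j}W_j$, solve the $2\times2$ Gram system for $\gamma_{\theta_j},\gamma_{\rho_j}$, verify \eqref{eqnn0005} and the smallness of the cross term, extract the extra $\rho_j^{-1}$ in $\gamma_{\theta_j}$ so that $\|\mathbf{E}_3^{\perp}\|_{\natural}\lesssim\|\mathbf{E}_3\|_{\natural}\lesssim\widehat{\eta}^{\frac{1-N}{2}}e^{-\sqrt{\lambda_{n_1}}\widehat{\eta}}$, and then invoke Proposition~\ref{prop0001}. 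The only cosmetic difference is that the paper bounds $|\gamma_{\theta_j}|,|\gamma_{\rho_j}|$ via the intermediate $L^2$ estimate $|\gamma_{\theta_j}|\lesssim\rho^{-1}\vartheta^{-1/2}\|E_{j,3}\|_{L^2}\lesssim\rho^{-1}\|E_{j,3}\|_{\natural,j}$ rather than directly through bump–bump integrals, but the content is the same.
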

	\begin{proof}
		Since $E_{j,3}=\sum_{i\not=j}\beta_{i,j}W_i^2W_j$ is invariant under the action of the group $\Theta_t$, we have
		\begin{eqnarray*}
			\int_{\bbr^N}E_{j,3}\partial_{x_l}\widetilde{w}_{t,j}dx=\frac{1}{\vartheta}\int_{\bbr^N}E_{j,3}\partial_{x_l}W_jdx,
		\end{eqnarray*}
		where $\Theta_t$ is given by \eqref{eq4001}.
		Moreover, since $\widetilde{w}_{t,j}=w(x-\rho_j\xi_{t,j})$, by \eqref{eq0004},
		\begin{eqnarray*}
			(\partial_{\rho_j}\widetilde{w}_{t,j}, \partial_{\theta_{j}}\widetilde{w}_{t,j})=\mathbb{M}\nabla_{x'}\widetilde{w}_{t,j},
		\end{eqnarray*}
		where $x'=(x_1,x_2)$ and
		\begin{eqnarray*}
			\mathbb{M}=\left(\begin{matrix}-\cos\bigg(\theta_{j}+\frac{2(t-1)\pi}{\vartheta}\bigg) & -\sin\bigg(\theta_{j}+\frac{2(t-1)\pi}{\vartheta}\bigg)\\
				\rho_j\sin\bigg(\theta_{j}+\frac{2(t-1)\pi}{\vartheta}\bigg) & -\rho_j\cos\bigg(\theta_{j}+\frac{2(t-1)\pi}{\vartheta}\bigg)\end{matrix}\right).
		\end{eqnarray*}
		Clearly, $\mathbb{M}$ is invertible and $u$ is even for $x_3$ in the case of $N=3$ if $u\in\mathbb{Y}^{\perp}$.  Thus, $\int_{\bbr^N}\partial_{x_l}\widetilde{w}_{t,j}udx=0$ for all $t$ and $l$, the orthogonal conditions in $\mathbb{X}_j^{\perp}$ and $\mathbb{Y}_j^{\perp}$, is equivalent to
		\begin{eqnarray*}
			\int_{\bbr^N}\partial_{\rho_j}\widetilde{w}_{t,j}udx=0\quad\text{and}\quad\int_{\bbr^N}\partial_{\theta_{j}}\widetilde{w}_{t,j}udx=0\quad\text{for all }t.
		\end{eqnarray*}
		It follows from $|\xi_{t,j}|=1$ and the invariance of $u$ under the action of the group $\Theta_t$ that the orthogonal conditions in $\mathbb{X}_j^{\perp}$ and $\mathbb{Y}_j^{\perp}$ are
		\begin{eqnarray*}
			\int_{\bbr^N}\partial_{\rho_j}W_judx=0\quad\text{and}\quad\int_{\bbr^N}\partial_{\theta_{j}}W_judx=0.
		\end{eqnarray*}
		By the construction of $\{\xi_{t,j}\}$,
		\begin{eqnarray}\label{eqnn0006}
			\|\partial_{\rho_j}W_j\|_{L^2(\bbr^N)}^2=\vartheta\|\partial_{\rho_j}\widetilde{w}_{1,j}\|_{L^2(\bbr^N)}^2
			+2\sum_{s,t;t\not=s}\int_{\bbr^N}\partial_{\rho_j}\widetilde{w}_{t,j}\partial_{\rho_j}\widetilde{w}_{s,j}dx.
		\end{eqnarray}
		Since $|\partial_{\rho_j}\widetilde{w}_{t,j}|\lesssim\widetilde{w}_{t,j}$ for every $\tau$ and every $j\in \mathbf{n}_\tau$, by \cite[Lemma~A.2]{PV22}, \eqref{eq0009}, $\rho\sim\vartheta\log\vartheta$ and similar estimates of \eqref{eq3003},
		\begin{eqnarray}
			|\sum_{s,t;t\not=s}\int_{\bbr^N}\partial_{\rho_j}\widetilde{w}_{t,j}\partial_{\rho_j}\widetilde{w}_{s,j}dx|&\lesssim&
			\sum_{s,t;t\not=s}\int_{\bbr^N}\widetilde{w}_{t,j}\widetilde{w}_{s,j}dx\notag\\
			&\sim&\sum_{t\not=s}|\eta_{t,j}-\eta_{s,j}|^{\frac{3-N}{2}}e^{-\sqrt{\lambda_{n_\tau}}|\eta_{t,j}-\eta_{s,j}|}\notag\\
			&\lesssim&\sum_{t=1}^{\vartheta}\sum_{s\not=t}|\eta_{t,j}-\eta_{s,j}|^{\frac{3-N}{2}}e^{-\sqrt{\lambda_{n_\tau}}|\eta_{t,j}-\eta_{s,j}|}\notag\\
			&\lesssim&\vartheta\widetilde{\eta}_{j}^{\frac{3-N}{2}}e^{-\sqrt{\lambda_{n_\tau}}\widetilde{\eta}_{j}}\notag\\
			&\lesssim&\vartheta^{1-\sigma'},\label{eqnn0011}
		\end{eqnarray}
		where $\sigma'>0$ is a constant.  Thus, by \eqref{eqnn0006}, $\|\partial_{\rho_j}W_j\|_{L^2(\bbr^N)}^2\sim\vartheta$.    Note that by
		\begin{eqnarray*}
			|\partial_{\rho_j}\widetilde{w}_{t,j}|^2+\rho_j^{-2}|\partial_{\theta_{j}}\widetilde{w}_{t,j}|^2=|\nabla_{x'} \widetilde{w}_{t,j}|^2,
		\end{eqnarray*}
		we also have $|\partial_{\theta_{j}}\widetilde{w}_{t,j}|\lesssim\rho_j\widetilde{w}_{t,j}$, thus, the estimates of $\|\partial_{\theta_{j}}W_j\|_{L^2(\bbr^N)}^2\sim\rho_j^2\vartheta$ is similar and we omit it.  On the other hand, by similar estimates of \eqref{eqnn0011} and $\int_{\bbr^N}\partial_{\theta_{j}}\widetilde{w}_{j,t}\partial_{\rho_{j}}\widetilde{w}_{j,t}dx=0$ for all $t$, we have
		\begin{eqnarray}\label{eqnn9996}
			|\int_{\bbr^N}\partial_{\theta_{j}}W_j\partial_{\rho_{j}}W_jdx|\lesssim|\sum_{t,s;t\not=s}\int_{\bbr^N}\partial_{\rho_j}\widetilde{w}_{t,j}\partial_{\theta_{j}}\widetilde{w}_{s,j}dx|
			\lesssim\rho_j\vartheta\widehat{\eta}_{j}^{\frac{3-N}{2}}e^{-\sqrt{\lambda_{n_\tau}}\widehat{\eta}_{j}},
		\end{eqnarray}
		therefore, by \eqref{eqnn0005} and \eqref{eqnn9996}, \eqref{eqnn0007} and \eqref{eqn2038} hold.  Clearly, by \eqref{eqnn0007} and \eqref{eqnn0005},
		\begin{eqnarray*}
			|E_{j,3}^{\perp}|\lesssim|E_{j,3}|+(|\gamma_{\theta_{j}}|\rho_j+|\gamma_{\rho_j}|)\sum_{t=1}^{\vartheta}\widetilde{w}_{t,j}.
		\end{eqnarray*}
		Note that by \eqref{eqnnnew3023} \eqref{eqn2038} and and \eqref{eqnnnew2038},
		\begin{eqnarray}\label{eqnn0008}
			|\gamma_{\theta_{j}}|\lesssim\rho^{-1}\vartheta^{-\frac12}\|E_{j,3}\|_{L^2(\bbr^N)}\lesssim\rho^{-1}\|E_{j,3}\|_{\natural,j}
			\lesssim\rho^{-1}\widehat{\eta}^{\frac{1-N}{2}}e^{-\sqrt{\lambda_{n_1}}\widehat{\eta}}
		\end{eqnarray}
		and
		\begin{eqnarray}\label{eqnn1008}
			|\gamma_{\rho_j}|\lesssim\vartheta^{-\frac12}\|E_{j,3}\|_{L^2(\bbr^N)}\lesssim\|E_{j,3}\|_{\natural,j}
			\lesssim\widehat{\eta}^{\frac{1-N}{2}}e^{-\sqrt{\lambda_{n_1}}\widehat{\eta}},
		\end{eqnarray}
		thus, $\|\mathbf{E}_{3}^{\perp}\|_{\natural}\lesssim\|\mathbf{E}_{3}\|_{\natural}\lesssim\widehat{\eta}^{\frac{1-N}{2}}e^{-\sqrt{\lambda_{n_1}}\widehat{\eta}}$.
		By Proposition~\ref{prop0001}, \eqref{eqnew9988} has a unique solution $\mathbf{Q}_*$ satisfying
		\begin{eqnarray}\label{eqn2040}
			\|\mathbf{Q}_*\|_{\sharp}\lesssim\widehat{\eta}^{\frac{1-N}{2}}e^{-\sqrt{\lambda_{n_1}}\widehat{\eta}}.
		\end{eqnarray}
		It completes the proof.
	\end{proof}
	
	Let $E_{j}^*=E_{j,1}+E_{j,2}$, then by \eqref{eqnnnew3001} and\eqref{eqnnnew3012},
	\begin{eqnarray}
		|E_{j}^*|&\lesssim&\bigg(\rho^{-\nu_j}+\widetilde{\eta}_{j}^{\frac{1-N}{2}}e^{-\sqrt{\lambda_{n_\tau}}\widetilde{\eta}_{j}}\bigg)
		\bigg(\sum_{t=1}^{\vartheta}(1+r_{t,j})^{-(\frac{N-1}{2}-\sigma)}e^{-\sqrt{\lambda_{n_\tau}}r_{t,j}}\chi_{\{r_{t,j}
			\leq\frac{\widetilde{\eta}_{j}}{2}\}}\notag\\
		&&+\bigg(\sum_{t=1}^{\vartheta}(1+r_{t,j})^{-(\frac{N-1}{2}-\sigma)}e^{-\sqrt{\lambda_{n_\tau}}r_{t,j}}\bigg)
		\chi_{\cap_{t=1}^{\vartheta}\{r_{t,j}\geq\frac{\widetilde{\eta}_{j}}{2}\}}\bigg).\label{eq3013}
	\end{eqnarray}
	We use $\mathbf{U}_*=(W_1+Q_{1,*},W_2+Q_{2,*},\cdots,W_d+Q_{d,*})$ as the final ansatz and write
	\begin{eqnarray*}
		U_j=W_j+Q_{j,*}+v_{j,**},
	\end{eqnarray*}
	where $v_{j,**}$ are the correction terms.  By \eqref{eq0012} and \eqref{eqnew9988}, $\mathbf{v}_{**}=(v_{1,**},v_{2,**},\cdots,v_{d,**})$ satisfies the nonlinear problem
	\begin{equation}\label{eq9003}
		\left\{\aligned&\mathcal{L}(\mathbf{v}_{**})=\mathbf{E}_{**}^{\perp}+\mathbf{N}_{**}^{\perp},\quad\text{in }\bbr^N,\\
		&\mathbf{v}\in\mathbb{X}^{\perp},\endaligned\right.
	\end{equation}
	where $\mathbf{E}_{**}=(E_{1}^{**},E_{2}^{**},\cdots,E_{d}^{**})$ with
	\begin{eqnarray}
		E_{j}^{**}&=&E_{j}^{*}+\sum_{i=1;i\not=j}^{d}\beta_{i,j}(2W_iQ_{i,*}Q_{j,*}+W_jQ_{i,*}^2+Q_{i,*}^2Q_{j,*})+3\mu_j W_jQ_{j,*}^2+Q_{j,*}^3\notag\\
		&&+\gamma_{\theta_{j}}\partial_{\theta_{j}}W_j+\gamma_{\rho_j}\partial_{\rho_j}W_j\notag\\
		&=:&E_j^{***}+\gamma_{\theta_{j}}\partial_{\theta_{j}}W_j+\gamma_{\rho_j}\partial_{\rho_j}W_j\label{eqnn9999}
	\end{eqnarray}
	and $\mathbf{N}_{**}=(N_1^{**},N_2^{**},\cdots,N_d^{**})$ with
	\begin{eqnarray}
		N_j^{**}&=&\sum_{i=1;i\not=j}^{d}\beta_{i,j}(2W_i(v_{i,**}Q_{j,*}+Q_{i,*}v_{j,**}+v_{i,**}v_{j,**})+W_j(2Q_{i,*}v_{i,**}+v_{i,**}^2)\notag\\
		&&+(v_{i,**}^2+2Q_{i,*}v_{i,**})(v_{j,**}+Q_{j,*})+Q_{i,*}^2v_{j,**})+3\mu_j W_j(v_{j,**}^2+2v_{j,**}Q_{j,*})\notag\\
		&&+v_{j,**}^3+3v_{j,**}^2Q_{j,*}+3v_{j,**}Q_{j,*}^2.\label{eqnn9998}
	\end{eqnarray}
	\begin{lemma}\label{lem0003}
		Suppose the assumptions~$(V_1)$--$(V_2)$ hold and $\beta_{i,j}$ is not a eigenvalue of $-\Delta+\lambda_j$ in $L^2(\bbr^N; w_i^2)$, that is,
		\begin{eqnarray*}
			-\Delta v+\lambda_j v=\beta_{i,j}w_i^2v \quad\text{in }\bbr^N
		\end{eqnarray*}
		has no solutions in $H^2(\bbr^N)$, then
		for $\vartheta$ and $\rho\overline{\alpha}=\rho\min\limits_{1\le j\le d}\alpha_j$ sufficiently large, \eqref{eq9003} is uniquely solvable in the set
		\begin{eqnarray*}
			\mathbb{B}=\{\mathbf{v}\in\mathbb{X}^{\perp}\mid \|\mathbf{v}_{**}\|_{\natural}\lesssim \rho^{-\nu_j}+\widehat{\eta}^{\frac{1-N}{2}}e^{-2\sqrt{\lambda_{n_1}}\widehat{\eta}}\}.
		\end{eqnarray*}
	\end{lemma}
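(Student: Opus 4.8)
The plan is to invert $\mathcal{L}$ by Proposition~\ref{prop0001} and solve \eqref{eq9003} by a contraction mapping argument. Since $\beta_{i,j}$ is not an eigenvalue of $-\Delta+\lambda_j$ in $L^2(\bbr^N;w_i^2)$, Proposition~\ref{prop0001} provides, for $\vartheta$ and $\rho\overline{\alpha}$ large, a bounded inverse $\mathcal{L}^{-1}:\mathbb{Y}^{\perp}\to\mathbb{X}^{\perp}$ with $\|\mathcal{L}^{-1}\mathbf{h}\|_{\sharp}\lesssim\|\mathbf{h}\|_{\natural}$, so that \eqref{eq9003} is equivalent to the fixed point problem
\[
\mathbf{v}_{**}=\mathcal{A}(\mathbf{v}_{**}):=\mathcal{L}^{-1}\big(\mathbf{E}_{**}^{\perp}+\mathbf{N}_{**}^{\perp}(\mathbf{v}_{**})\big).
\]
It then suffices to show that $\mathcal{A}$ maps the closed set $\mathbb{B}$ into itself and is a contraction there; uniqueness in $\mathbb{B}$ comes for free. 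I would equip $\mathbb{B}$ with the metric induced by $\|\cdot\|_{\sharp}$ (recall $\|\cdot\|_{\sharp}\lesssim\|\cdot\|_{\natural}$, so $\mathbb{B}$ is $\|\cdot\|_{\sharp}$-closed), the sharper $\|\cdot\|_{\natural}$-membership being recovered a posteriori.

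\emph{Step 1 (the source term).} One shows $\|\mathbf{E}_{**}^{\perp}\|_{\natural}\lesssim\rho^{-\nu_{*}}+\widehat{\eta}^{\frac{1-N}{2}}e^{-2\sqrt{\lambda_{n_1}}\widehat{\eta}}$. The decisive remark is that in \eqref{eqnn9999} the kernel-direction part $\gamma_{\theta_{j}}\partial_{\theta_{j}}W_j+\gamma_{\rho_j}\partial_{\rho_j}W_j$ lies in $\mathrm{span}\{\partial_{x_l}\widetilde{w}_{t,j}\}_{l,t}$ (via the matrix $\mathbb{M}$ from the proof of Lemma~\ref{lem0002}) and is therefore annihilated by the projection $(\cdot)^{\perp}$ onto $\mathbb{Y}^{\perp}$; hence $\mathbf{E}_{**}^{\perp}$ reduces to the projection of $E_j^{*}=E_{j,1}+E_{j,2}$ together with the quadratic and cubic terms in $\mathbf{Q}_{*}$. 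The first part is controlled by \eqref{eq3013}, giving $\|E_j^{*}\|_{\natural,j}\lesssim\rho^{-\nu_j}+\widetilde{\eta}_{j}^{\frac{1-N}{2}}e^{-\sqrt{\lambda_{n_\tau}}\widetilde{\eta}_{j}}$, while the $\mathbf{Q}_{*}$-terms are handled using \eqref{eqn2040} and pointwise estimates for products of translated ground states as in \cite[Lemma~2.1]{WW21}; in particular the mixed term $W_iQ_{i,*}Q_{j,*}$ gains an extra $e^{-\sqrt{\lambda_{n_1}}\widehat{\eta}}$ because $W_i$ is evaluated away from the $j$-th peaks. Combining these with $\rho\sim\vartheta\log\vartheta$, $\widehat{\eta}\le\widetilde{\eta}_j$ and the pinching condition \eqref{pinch} to compare the exponential rates $\sqrt{\lambda_{n_\tau}}$ yields the asserted bound.

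\emph{Step 2 (the nonlinear term and the contraction).} Every summand of $N_j^{**}$ in \eqref{eqnn9998} carries at least one factor $v_{i,**}$ or $v_{j,**}$; those exactly linear in $\mathbf{v}_{**}$ come with a coefficient of the type $W_{\bullet}Q_{\bullet,*}$ or $Q_{\bullet,*}^{2}$, whose operator norm is $O(\|\mathbf{Q}_{*}\|_{\sharp})=o(1)$ by \eqref{eqn2040}, and the remaining summands are at least quadratic in $\mathbf{v}_{**}$. The same weighted product estimates then give $\|\mathbf{N}_{**}^{\perp}(\mathbf{v})\|_{\natural}\lesssim o(1)\|\mathbf{v}\|_{\sharp}+C\|\mathbf{v}\|_{\sharp}^{2}$ and, likewise,
\[
\|\mathbf{N}_{**}^{\perp}(\mathbf{v}^{(1)})-\mathbf{N}_{**}^{\perp}(\mathbf{v}^{(2)})\|_{\natural}\lesssim\big(o(1)+\|\mathbf{v}^{(1)}\|_{\sharp}+\|\mathbf{v}^{(2)}\|_{\sharp}\big)\|\mathbf{v}^{(1)}-\mathbf{v}^{(2)}\|_{\sharp}.
\]
Together with Step 1 and $\|\mathcal{L}^{-1}\cdot\|_{\sharp}\lesssim\|\cdot\|_{\natural}$, for $\mathbf{v},\mathbf{v}^{(1)},\mathbf{v}^{(2)}\in\mathbb{B}$ and $\vartheta,\rho\overline{\alpha}$ large this makes $\mathcal{A}(\mathbb{B})\subset\mathbb{B}$ and $\mathcal{A}$ a contraction, so the Banach fixed point theorem produces a unique $\mathbf{v}_{**}$. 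Finally, the $\|\cdot\|_{\natural}$-membership in $\mathbb{B}$ is obtained by re-inserting $\mathbf{v}_{**}$ into $\mathcal{L}(\mathbf{v}_{**})=\mathbf{E}_{**}^{\perp}+\mathbf{N}_{**}^{\perp}$ and running the barrier/maximum-principle estimate of Proposition~\ref{prop0001} once more: every term on the right carries a $W$- or $\mathbf{Q}_{*}$-type factor and hence decays a full power of $(1+r_{t,i})$ faster than the $\|\cdot\|_{\natural}$-profile of the solution (here $\sigma>1$ is used), which upgrades the $\|\cdot\|_{\sharp}$-bound to the $\|\cdot\|_{\natural}$-bound.

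The main obstacle is Step 1 and the systematic use of the weighted product estimates behind it: one has to verify that $\mathbf{E}_{**}^{\perp}$ and the quadratic part of $\mathbf{N}_{**}^{\perp}$ genuinely fall inside the prescribed decay class $\rho^{-\nu_{*}}+\widehat{\eta}^{\frac{1-N}{2}}e^{-2\sqrt{\lambda_{n_1}}\widehat{\eta}}$, which forces a careful bookkeeping of the overlap regions $\{r_{t,i}\le\delta\widehat{\eta}\}$ and of the competing exponential rates $\sqrt{\lambda_{n_\tau}}$; this is precisely where the pinching condition \eqref{pinch} and the rearrangement \eqref{eq0002} are indispensable.
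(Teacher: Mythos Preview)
Your approach is the same as the paper's: invert $\mathcal{L}$ via Proposition~\ref{prop0001} and run a contraction on $\mathbb{B}$, after checking that $(\gamma_{\theta_j}\partial_{\theta_j}W_j+\gamma_{\rho_j}\partial_{\rho_j}W_j)^{\perp}=0$ so that $\mathbf{E}_{**}^{\perp}=(\mathbf{E}^{***})^{\perp}$, and then estimating $\|E_j^{***}\|_{\natural,j}$ and $\|N_j^{**}\|_{\natural,j}$.  Two small points to tighten.

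First, in Step~1 the passage from the bound \eqref{eq3013} for $E_j^{*}$, namely $\rho^{-\nu_j}+\widetilde{\eta}_j^{\frac{1-N}{2}}e^{-\sqrt{\lambda_{n_\tau}}\widetilde{\eta}_j}$, to the target $\rho^{-\nu_j}+\widehat{\eta}^{\frac{1-N}{2}}e^{-2\sqrt{\lambda_{n_1}}\widehat{\eta}}$ does \emph{not} follow from $\widehat{\eta}\le\widetilde{\eta}_j$ together with \eqref{pinch}; pinching gives $\sqrt{\lambda_{n_\tau}}<2\sqrt{\lambda_{n_1}}$, which goes the wrong way.  What you need is the elementary geometric fact (used explicitly in the paper) that $d\ge 2$ forces $\widetilde{\eta}_j\ge 2\widehat{\eta}$, whence $\sqrt{\lambda_{n_\tau}}\,\widetilde{\eta}_j\ge\sqrt{\lambda_{n_1}}\,\widetilde{\eta}_j\ge 2\sqrt{\lambda_{n_1}}\,\widehat{\eta}$.

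Second, your explanation of why the $\mathbf{Q}_*$-terms in $E_j^{***}$ land in the right class is slightly off: the gain is not that ``$W_i$ is evaluated away from the $j$-th peaks'' but simply that every such term carries at least \emph{two} factors of $\mathbf{Q}_*$, so \eqref{eqn2040} squared already produces $\widehat{\eta}^{1-N}e^{-2\sqrt{\lambda_{n_1}}\widehat{\eta}}$; the remaining $W$-factor only supplies the $\|\cdot\|_{\natural}$-weight.  Your handling of the $\sharp/\natural$ interplay (contract in $\|\cdot\|_{\sharp}$ and bootstrap back to $\|\cdot\|_{\natural}$) is in fact more careful than the paper, which just writes the product estimate for $N_j^{**}$ in $\|\cdot\|_{\natural}$ and invokes the fixed-point directly.
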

	\begin{proof}
		Since $d\geq2$, we have $\min_{j}\widehat{\eta}_j\geq2\widehat{\eta}$.  It follows from \eqref{eq3013} and \eqref{eqn2040} and similar estimates of \eqref{eqnn0008} and \eqref{eqnn1008} that
		\begin{eqnarray}\label{eqnn0013}
			\|(E_{j}^{***})^{\perp}\|_{\natural,j}\lesssim\|E_{j}^{***}\|_{\natural,j}\lesssim\rho^{-\nu_j}+\widehat{\eta}^{\frac{1-N}{2}}e^{-2\sqrt{\lambda_{n_1}}\widehat{\eta}}
		\end{eqnarray}
		for all $i$.  Note that $(\gamma_{\alpha_j}\partial_{\alpha_j}W_j+\gamma_{\rho_j}\partial_{\rho_j}W_j)^{\perp}=0$, thus,
		by \eqref{eqnn0013},
		\begin{eqnarray*}
			\|(E_{j}^{**})^{\perp}\|_{\natural,j}\lesssim\rho^{-\nu_j}+\widehat{\eta}^{\frac{1-N}{2}}e^{-2\sqrt{\lambda_{n_1}}\widehat{\eta}}.
		\end{eqnarray*}
		Since
		\begin{eqnarray*}
			\|N_j^{**}\|_{\natural,j}&\lesssim&\sum_{i=1;i\not=j}^{d}(\|v_{i,**}\|_{\natural,i}\|Q_{j,*}\|_{\natural,j}+\|v_{j,**}\|_{\natural,j}\|Q_{i,*}\|_{\natural,i}
			+\|v_{i,**}\|_{\natural,i}\|Q_{i,*}\|_{\natural,i}+\|v_{i,**}\|_{\natural,i}^2\notag\\
			&&+\|v_{i,**}\|_{\natural,i}\|v_{j,**}\|_{\natural,j})+\|v_{j,**}\|_{\natural,j}\|Q_{j,*}\|_{\natural,j}+\|v_{j,**}\|_{\natural,j}^2,\label{eqn2041}
		\end{eqnarray*}
		by \eqref{eqn2040}, we can use a fix-point argument to solve \eqref{eq9003} in the set $\mathbb{B}$ and the unique solution $\mathbf{v}_{**}$ satisfies
		\begin{eqnarray}\label{eqn2042}
			\|\mathbf{v}\|_{\natural}\lesssim\rho^{-\nu_j}+\widehat{\eta}^{\frac{1-N}{2}}e^{-2\sqrt{\lambda_{n_1}}\widehat{\eta}}.
		\end{eqnarray}
		It completes the proof.
	\end{proof}

	\section{Estimates of $\int_{\bbr^N}E_{j,3}\partial_{\theta_{j}}W_jdx$ and $\int_{\bbr^N}E_{j,3}\partial_{\rho_j}W_jdx$}
	Clearly,
	\begin{eqnarray*}
		\vartheta^{-1}\int_{\bbr^N}E_{j,3}\partial_{\theta_{j}}W_jdx&=&\vartheta^{-1}\sum_{t=1}^{\vartheta}\sum_{i=1;i\not=j}^{d}\sum_{s=1}^{\vartheta}
		\int_{\bbr^N}\widetilde{w}_{s,i}^2\widetilde{w}_{t,j}\partial_{\theta_{j}}\widetilde{w}_{t,j}dx\\
		&&+\vartheta^{-1}\sum_{t=1}^{\vartheta}\sum_{i=1;i\not=j}^{d}\sum_{s=1}^{\vartheta}\sum_{l=1;l\not=t}^{\vartheta}
		\int_{\bbr^N}\widetilde{w}_{s,i}^2\widetilde{w}_{t,j}\partial_{\theta_{j}}\widetilde{w}_{l,j}dx
	\end{eqnarray*}
	and
	\begin{eqnarray*}
		\vartheta^{-1}\int_{\bbr^N}E_{j,3}\partial_{\rho_j}W_jdx&=&\vartheta^{-1}\sum_{t=1}^{\vartheta}\sum_{i=1;i\not=j}^{d}\sum_{s=1}^{\vartheta}
		\int_{\bbr^N}\widetilde{w}_{s,i}^2\widetilde{w}_{t,j}\partial_{\rho_j}\widetilde{w}_{t,j}dx\\
		&&+\vartheta^{-1}\sum_{t=1}^{\vartheta}\sum_{i=1;i\not=j}^{d}\sum_{s=1}^{\vartheta}\sum_{l=1;l\not=t}^{\vartheta}
		\int_{\bbr^N}\widetilde{w}_{s,i}^2\widetilde{w}_{t,j}\partial_{\rho_j}\widetilde{w}_{l,j}dx.
	\end{eqnarray*}
	Let
	\begin{eqnarray*}
		\xi_{t,j}^{\perp}=\left\{\aligned&\bigg(-\sin\bigg(\theta_{j}+\frac{2(t-1)\pi}{\vartheta}\bigg), \cos\bigg(\theta_{j}+\frac{2(t-1)\pi}{\vartheta}\bigg)\bigg),\quad N=2,\\
		&\bigg(-\sin\bigg(\theta_{j}+\frac{2(t-1)\pi}{\vartheta}\bigg), \cos\bigg(\theta_{j}+\frac{2(t-1)\pi}{\vartheta}\bigg),0\bigg),\quad N=3.
		\endaligned\right.
	\end{eqnarray*}
	\begin{lemma}\label{lem0004}
		Suppose the assumptions~$(V_1)$--$(V_2)$ hold and $\beta_{i,j}$ is not a eigenvalue of $-\Delta+\lambda_j$ in $L^2(\bbr^N; w_i^2)$, that is,
		\begin{eqnarray*}
			-\Delta v+\lambda_j v=\beta_{i,j}w_i^2v \quad\text{in }\bbr^N
		\end{eqnarray*}
		has no solutions in $H^2(\bbr^N)$.  If $\lambda_{n_k}<4\lambda_{n_1}$ under the condition~\eqref{eq0002}, $\alpha_{n_{\tau+1}}<\alpha_{n_{\tau}}$ for all $\tau=1,2,\cdots,k-2$ and $\alpha_{n_k}=(1+o(1))\alpha_{n_1}$ with
		\begin{eqnarray*}
			\max\{\max\{\sqrt{\lambda_{n_\tau}}\alpha_{n_{\tau}}\}, \sqrt{\lambda_{d}}\alpha_{d-1}\}\leq2\min\{\min\{\sqrt{\lambda_{n_\tau'}}\alpha_{n_{\tau'}}\},\sqrt{\lambda_{d}}\alpha_{d-1}\}
		\end{eqnarray*}
		in the case of $k>1$, then
		for $\vartheta$ and $\rho\overline{\alpha}=\rho\min\limits_{1\le j\le d}\alpha_j$ sufficiently large,
		\begin{enumerate}
			\item[$(1)$] for $j\not=n_{\tau-1}+1$, $j\not=n_{\tau}$ or $k=1$,
			\begin{eqnarray*}
				\vartheta^{-1}\int_{\bbr^N}E_{j,3}\partial_{\theta_{j}}W_jdx=\left\{\aligned&\rho^{\frac{1}{2}}(c_1\alpha_{j}^{-\frac12}e^{-2\sqrt{\lambda_{n_\tau}}\rho\alpha_{j}}
				-c_2\alpha_{j-1}^{-\frac12}e^{-2\sqrt{\lambda_{n_\tau}}\rho\alpha_{j-1}})+h.o.t.,\quad N=2,\\
				&\rho^{-1}(\frac{c_1}{\alpha^2_{j}}e^{-2\sqrt{\lambda_{n_\tau}}\rho\alpha_{j}}\log(\rho\alpha_{j})
				-\frac{c_2}{\alpha^2_{j-1}}e^{-2\sqrt{\lambda_{n_\tau}}\rho\alpha_{j-1}}\log(\rho\alpha_{j-1}))+h.o.t.,\quad N=3
				\endaligned\right.
			\end{eqnarray*}
			and
			\begin{eqnarray*}
				\vartheta^{-1}\int_{\bbr^N}E_{j,3}\partial_{\rho_j}W_jdx=\left\{\aligned&\rho^{-\frac{1}{2}}(c_1'\alpha_{j}^{\frac12}e^{-2\sqrt{\lambda_{n_\tau}}\rho\alpha_{j}}
				-c_2'\alpha_{j-1}^{\frac12}e^{-2\sqrt{\lambda_{n_\tau}}\rho\alpha_{j-1}})+h.o.t.,\quad N=2,\\
				&\rho^{-2}(\frac{c_1'}{\alpha_{j}}e^{-2\sqrt{\lambda_{n_\tau}}\rho\alpha_{j}}\log(\rho\alpha_{j})
				-\frac{c_2'}{\alpha_{j-1}}e^{-2\sqrt{\lambda_{n_\tau}}\rho\alpha_{j-1}}\log(\rho\alpha_{j-1}))+h.o.t.,\quad N=3.
				\endaligned\right.
			\end{eqnarray*}
			\item[$(2)$] For $j=n_{\tau-1}+1$ with $\tau>1$, if $n_{\tau}-n_{\tau-1}>1$ then
			\begin{eqnarray*}
				\vartheta^{-1}\int_{\bbr^N}E_{j,3}\partial_{\theta_{j}}W_jdx=\left\{\aligned&c_1\rho^{\frac{1}{2}}\alpha_{j}^{-\frac12}e^{-2\sqrt{\lambda_{n_\tau}}\rho\alpha_{j}}
				-c_2e^{-2\sqrt{\lambda_{n_{\tau-1}}}\rho\alpha_{j-1}}+h.o.t.,\quad N=2,\\
				&\rho^{-1}(\frac{c_1}{\alpha_{j}^2}e^{-2\sqrt{\lambda_{n_\tau}}\rho\alpha_{j}}\log(\rho\alpha_{j})
				-\frac{c_2}{\alpha_{j-1}^2}e^{-2\sqrt{\lambda_{n_\tau-1}}\rho\alpha_{j-1}})+h.o.t.,\quad N=3
				\endaligned\right.
			\end{eqnarray*}
			and
			\begin{eqnarray*}
				\vartheta^{-1}\int_{\bbr^N}E_{j,3}\partial_{\rho_j}W_jdx=\left\{\aligned&c_1\rho^{-\frac{1}{2}}\alpha_{j}^{\frac12}e^{-2\sqrt{\lambda_{n_\tau}}\rho\alpha_{j}}
				-c_2\rho^{-1}e^{-2\sqrt{\lambda_{n_{\tau-1}}}\rho\alpha_{j-1}}+h.o.t.,\quad N=2,\\
				&\rho^{-2}(\frac{c_1}{\alpha_{j}}e^{-2\sqrt{\lambda_{n_\tau}}\rho\alpha_{j+1}}\log(\rho\alpha_{j})
				-\frac{c_2}{\alpha_{j-1}}e^{-2\sqrt{\lambda_{n_\tau-1}}\rho\alpha_{j-1}})+h.o.t.,\quad N=3,
				\endaligned\right.
			\end{eqnarray*}
			while if $n_{\tau}-n_{\tau-1}=1$ then
			\begin{eqnarray*}
				\vartheta^{-1}\int_{\bbr^N}E_{j,3}\partial_{\theta_{j}}W_jdx=\left\{\aligned&(\frac{c_1}{\alpha_{j}}e^{-2\sqrt{\lambda_{n_\tau}}\rho\alpha_{j}}
				-\frac{c_2}{\alpha_{j-1}}e^{-2\sqrt{\lambda_{n_{\tau-1}}}\rho\alpha_{j-1}})+h.o.t.,\quad N=2,\\
				&\rho^{-1}(\frac{c_1}{\alpha_{j+1}^2}e^{-2\sqrt{\lambda_{n_\tau}}\rho\alpha_{j+1}}
				-\frac{c_2}{\alpha_{j}^2}e^{-2\sqrt{\lambda_{n_\tau-1}}\rho\alpha_{j}})+h.o.t.,\quad N=3
				\endaligned\right.
			\end{eqnarray*}
			and
			\begin{eqnarray*}
				\vartheta^{-1}\int_{\bbr^N}E_{j,3}\partial_{\rho_j}W_jdx=\left\{\aligned&\rho^{-1}(c_1e^{-2\sqrt{\lambda_{n_\tau}}\rho\alpha_{j}}
				-c_2e^{-2\sqrt{\lambda_{n_{\tau-1}}}\rho\alpha_{j-1}})+h.o.t.,\quad N=2,\\
				&\rho^{-2}(\frac{c_1}{\alpha_{j}}e^{-2\sqrt{\lambda_{n_\tau}}\rho\alpha_{j}}
				-\frac{c_2}{\alpha_{j-1}}e^{-2\sqrt{\lambda_{n_\tau-1}}\rho\alpha_{j-1}})+h.o.t.,\quad N=3.
				\endaligned\right.
			\end{eqnarray*}
			\item[$(3)$] For $j=1$, if $n_1>1$ then
			\begin{eqnarray*}
				\vartheta^{-1}\int_{\bbr^N}E_{1,3}\partial_{\theta_{1}}W_1dx=\left\{\aligned&c_1\rho^{\frac{1}{2}}\alpha_{1}^{-\frac12}e^{-2\sqrt{\lambda_{n_1}}\rho\alpha_{1}}
				-\frac{c_2}{\alpha_d}e^{-2\sqrt{\lambda_{n_{1}}}\rho\alpha_{d}}+h.o.t.,\quad N=2,\\
				&\rho^{-1}(\frac{c_1}{\alpha_{1}^2}e^{-2\sqrt{\lambda_{n_1}}\rho\alpha_{1}}\log(\rho\alpha_{1})
				-\frac{c_2}{\alpha_{d}^2}e^{-2\sqrt{\lambda_{n_1}}\rho\alpha_{d}})+h.o.t.,\quad N=3
				\endaligned\right.
			\end{eqnarray*}
			and
			\begin{eqnarray*}
				\vartheta^{-1}\int_{\bbr^N}E_{1,3}\partial_{\rho_1}W_1dx=\left\{\aligned&c_1\rho^{-\frac{1}{2}}\alpha_{1}^{\frac12}e^{-2\sqrt{\lambda_{n_1}}\rho\alpha_{1}}
				-c_2\rho^{-1}e^{-2\sqrt{\lambda_{n_{1}}}\rho\alpha_{d}}+h.o.t.,\quad N=2,\\
				&\rho^{-2}(\frac{c_1}{\alpha_1}e^{-2\sqrt{\lambda_{n_1}}\rho\alpha_{1}}\log(\rho\alpha_{1})
				-\frac{c_2}{\alpha_d}e^{-2\sqrt{\lambda_{n_1}}\rho\alpha_{d}})+h.o.t.,\quad N=3,
				\endaligned\right.
			\end{eqnarray*}
			while if $n_{1}=1$ then
			\begin{eqnarray*}
				\vartheta^{-1}\int_{\bbr^N}E_{1,3}\partial_{\theta_{1}}W_1dx=\left\{\aligned&(\frac{c_1}{\alpha_1}e^{-2\sqrt{\lambda_{n_1}}\rho\alpha_{1}}
				-\frac{c_2}{\alpha_d}e^{-2\sqrt{\lambda_{n_{1}}}\rho\alpha_{d}})+h.o.t.,\quad N=2,\\
				&\rho^{-1}(\frac{c_1}{\alpha_{1}^2}e^{-2\sqrt{\lambda_{n_1}}\rho\alpha_{1}}
				-\frac{c_2}{\alpha_{d}^2}e^{-2\sqrt{\lambda_{n_1}}\rho\alpha_{d}})+h.o.t.,\quad N=3
				\endaligned\right.
			\end{eqnarray*}
			and
			\begin{eqnarray*}
				\vartheta^{-1}\int_{\bbr^N}E_{1,3}\partial_{\rho_1}W_1dx=\left\{\aligned&\rho^{-1}(c_1e^{-2\sqrt{\lambda_{n_1}}\rho\alpha_{1}}
				-c_2e^{-2\sqrt{\lambda_{n_{1}}}\rho\alpha_{d}})+h.o.t.,\quad N=2,\\
				&\rho^{-2}(\frac{c_1}{\alpha_1}e^{-2\sqrt{\lambda_{n_1}}\rho\alpha_{1}}
				-\frac{c_2}{\alpha_d}e^{-2\sqrt{\lambda_{n_1}}\rho\alpha_{d}})+h.o.t.,\quad N=3.
				\endaligned\right.
			\end{eqnarray*}
			\item[$(4)$] For $j=n_{\tau}$ with $\tau<k$, if $n_{\tau}-n_{\tau-1}>1$ then
			\begin{eqnarray*}
				\vartheta^{-1}\int_{\bbr^N}E_{j,3}\partial_{\theta_{j}}W_jdx=\left\{\aligned&\frac{c_1}{\alpha_{j}}e^{-2\sqrt{\lambda_{n_{\tau}}}\rho\alpha_{j}}
				-\rho^{\frac{1}{2}}c_2\alpha_{j-1}^{-\frac12}e^{-2\sqrt{\lambda_{n_\tau}}\rho\alpha_{j-1}}+h.o.t.,\quad N=2,\\
				&\rho^{-1}(\frac{c_1}{\alpha_{j}^2}e^{-2\sqrt{\lambda_{n_\tau}}\rho\alpha_{j}}
				-\frac{c_1}{\alpha_{j-1}^2}e^{-2\sqrt{\lambda_{n_\tau}}\rho\alpha_{j-1}}\log(\rho\alpha_{j-1}))+h.o.t.,\quad N=3
				\endaligned\right.
			\end{eqnarray*}
			and
			\begin{eqnarray*}
				\vartheta^{-1}\int_{\bbr^N}E_{j,3}\partial_{\rho_j}W_jdx=\left\{\aligned&c_1\rho^{-1}e^{-2\sqrt{\lambda_{n_{\tau}}}\rho\alpha_{j}}
				-c_2\rho^{-\frac{1}{2}}\alpha_{j-1}^{\frac12}e^{-2\sqrt{\lambda_{n_\tau}}\rho\alpha_{j-1}}+h.o.t.,\quad N=2,\\
				&\rho^{-2}(\frac{c_1}{\alpha_{j}}e^{-2\sqrt{\lambda_{n_\tau}}\rho\alpha_{j}}
				-\frac{c_2}{\alpha_{j-1}}e^{-2\sqrt{\lambda_{n_\tau}}\rho\alpha_{j-1}}\log(\rho\alpha_{j-1}))+h.o.t.,\quad N=3,
				\endaligned\right.
			\end{eqnarray*}
			while if $n_{\tau}-n_{\tau-1}=1$ then
			\begin{eqnarray*}
				\vartheta^{-1}\int_{\bbr^N}E_{j,3}\partial_{\theta_{j}}W_jdx=\left\{\aligned&(\frac{c_1}{\alpha_{j}}e^{-2\sqrt{\lambda_{n_\tau}}\rho\alpha_{j}}
				-\frac{c_2}{\alpha_{j-1}}e^{-2\sqrt{\lambda_{n_{\tau}}}\rho\alpha_{j-1}})+h.o.t.,\quad N=2,\\
				&\rho^{-1}(\frac{c_1}{\alpha_{j}^2}e^{-2\sqrt{\lambda_{n_\tau}}\rho\alpha_{j}}
				-\frac{c_2}{\alpha_{j-1}^2}e^{-2\sqrt{\lambda_{n_\tau}}\rho\alpha_{j-1}})+h.o.t.,\quad N=3
				\endaligned\right.
			\end{eqnarray*}
			and
			\begin{eqnarray*}
				\vartheta^{-1}\int_{\bbr^N}E_{j,3}\partial_{\rho_j}W_jdx=\left\{\aligned&\rho^{-1}(c_1e^{-2\sqrt{\lambda_{n_\tau}}\rho\alpha_{j}}
				-c_2e^{-2\sqrt{\lambda_{n_{\tau}}}\rho\alpha_{j-1}})+h.o.t.,\quad N=2,\\
				&\rho^{-2}(\frac{c_1}{\alpha_{j}}e^{-2\sqrt{\lambda_{n_\tau}}\rho\alpha_{j}}
				-\frac{c_2}{\alpha_{j-1}}e^{-2\sqrt{\lambda_{n_\tau}}\rho\alpha_{j-1}})+h.o.t.,\quad N=3.
				\endaligned\right.
			\end{eqnarray*}
			\item[$(5)$] For $j=d$, if $n_{k}-n_{k-1}>1$ then
			\begin{eqnarray*}
				\vartheta^{-1}\int_{\bbr^N}E_{d,3}\partial_{\theta_{d}}W_ddx=\left\{\aligned&\frac{c_1}{\alpha_{d}}e^{-2\sqrt{\lambda_{n_{1}}}\rho\alpha_{d}}
				-c_2\rho^{\frac{1}{2}}\alpha_{d-1}^{-\frac12}e^{-2\sqrt{\lambda_{n_k}}\rho\alpha_{d-1}}+h.o.t.,\quad N=2,\\
				&\rho^{-1}(\frac{c_1}{\alpha_{d}^2}e^{-2\sqrt{\lambda_{n_1}}\rho\alpha_{d}}
				-\frac{c_1}{\alpha_{d-1}^2}e^{-2\sqrt{\lambda_{n_k}}\rho\alpha_{d-1}}\log(\rho\alpha_{d-1}))+h.o.t.,\quad N=3
				\endaligned\right.
			\end{eqnarray*}
			and
			\begin{eqnarray*}
				\vartheta^{-1}\int_{\bbr^N}E_{d,3}\partial_{\rho_d}W_ddx=\left\{\aligned&c_1\rho^{-1}e^{-2\sqrt{\lambda_{n_{1}}}\rho\alpha_{d}}
				-c_2\rho^{-\frac{1}{2}}\alpha_{d-1}^{\frac12}e^{-2\sqrt{\lambda_{n_k}}\rho\alpha_{d-1}}+h.o.t.,\quad N=2,\\
				&\rho^{-2}(\frac{c_1}{\alpha_{d}}e^{-2\sqrt{\lambda_{n_1}}\rho\alpha_{d}}
				-\frac{c_2}{\alpha_{d-1}}e^{-2\sqrt{\lambda_{n_k}}\rho\alpha_{d-1}}\log(\rho\alpha_{d-1}))+h.o.t.,\quad N=3,
				\endaligned\right.
			\end{eqnarray*}
			while if $n_{k}-n_{k-1}=1$ then
			\begin{eqnarray*}
				\vartheta^{-1}\int_{\bbr^N}E_{d,3}\partial_{\theta_{d}}W_ddx=\left\{\aligned&(\frac{c_1}{\alpha_{d}}e^{-2\sqrt{\lambda_{n_1}}\rho\alpha_{d}}
				-\frac{c_2}{\alpha_{d-1}}e^{-2\sqrt{\lambda_{n_{k-1}}}\rho\alpha_{d-1}})+h.o.t.,\quad N=2,\\
				&\rho^{-1}(\frac{c_1}{\alpha_{d}^2}e^{-2\sqrt{\lambda_{n_1}}\rho\alpha_{d}}
				-\frac{c_2}{\alpha_{d-1}^2}e^{-2\sqrt{\lambda_{n_{k-1}}}\rho\alpha_{d-1}})+h.o.t.,\quad N=3
				\endaligned\right.
			\end{eqnarray*}
			and
			\begin{eqnarray*}
				\vartheta^{-1}\int_{\bbr^N}E_{d,3}\partial_{\rho_d}W_ddx=\left\{\aligned&\rho^{-1}(c_1e^{-2\sqrt{\lambda_{n_1}}\rho\alpha_{d}}
				-c_2e^{-2\sqrt{\lambda_{n_{k-1}}}\rho\alpha_{d-1}})+h.o.t.,\quad N=2,\\
				&\rho^{-2}(\frac{c_1}{\alpha_{d}}e^{-2\sqrt{\lambda_{n_1}}\rho\alpha_{d}}
				-\frac{c_2}{\alpha_{d-1}}e^{-2\sqrt{\lambda_{n_{k-1}}}\rho\alpha_{d-1}})+h.o.t.,\quad N=3.
				\endaligned\right.
			\end{eqnarray*}
		\end{enumerate}
	\end{lemma}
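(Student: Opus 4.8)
\emph{Proof plan.} The plan is to reduce each of $\vartheta^{-1}\int_{\bbr^N}E_{j,3}\partial_{\theta_j}W_j\,dx$ and $\vartheta^{-1}\int_{\bbr^N}E_{j,3}\partial_{\rho_j}W_j\,dx$, via the rotational invariance and a ``derivative under the integral'' trick, to a finite linear combination of derivatives of the two-bump interaction profiles already recorded in \eqref{eqnnnew1123}--\eqref{eqnnnew1125}, and then to read off the five cases from the geometry of $\{\eta_{t,j}\}$. First, since $E_{j,3}=\sum_{i\ne j}\beta_{i,j}W_i^2W_j$ and the whole configuration $\{\eta_{t,i}\}$ are invariant under the discrete rotations $\Theta_t$ of \eqref{eq4001}, after the change of variables $x\mapsto\Theta_t^{-1}x$ every summand in the triple sums displayed before the lemma depends only on the relative indices $(s-t,\,l-t,\,i)$; this collapses $\vartheta^{-1}\sum_{t=1}^\vartheta$ to a single term. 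For the part with $l=t$ I would use $\widetilde{w}_{t,j}\partial_{\theta_j}\widetilde{w}_{t,j}=\tfrac12\partial_{\theta_j}(\widetilde{w}_{t,j}^2)$ (and likewise for $\rho_j$) and pull the derivative outside $\int_{\bbr^N}$, which is legitimate because for $i\ne j$ the factor $\widetilde{w}_{s,i}$ is independent of $\theta_j$ and of $\rho_j$. Translating by $\eta_{s,i}$, each such summand becomes $\tfrac12\,\partial_{\theta_j}\big(g_{ij}(|\eta_{t,j}-\eta_{s,i}|)\big)$, where $g_{ij}(r)=\int_{\bbr^N}w_i^2(y)\,w_j^2(y-r e_1)\,dy$; by the radiality of $w_i,w_j$ this depends only on $r$, and its asymptotics as $r\to+\infty$ are precisely \eqref{eqnnnew1123} when $\lambda_i=\lambda_j$, \eqref{eqnnnew1124} when $\{i,j\}=\{n_\tau,n_\tau+1\}$ straddles a block boundary, and \eqref{eqnnnew1125} for the wrap-around pair $\{n_k,n_1\}$.

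Applying the chain rule leaves two ingredients. The first is the asymptotics of the \emph{derivative} $g_{ij}'$: writing $g_{ij}'(r)=-\int_{\bbr^N}w_i^2(y)\,\partial_1(w_j^2)(y-re_1)\,dy$, which is again an interaction integral with a smooth, exponentially-decaying weight, the representation/H\"older arguments behind \cite[Lemma~3.7]{ACR07} applied to the weight $\partial_1(w_j^2)$ rather than $w_j^2$ give $g_{ij}'(r)=-\,b_{ij}\,g_{ij}(r)\,(1+o(1))$, with $b_{ij}=2\sqrt{\lambda_{n_\tau}}$ when $i,j\in\mathfrak{n}_\tau$, $b_{ij}=2\sqrt{\lambda_{n_\tau}}$ (the \emph{smaller} rate) across the boundary of $\mathfrak{n}_\tau$, and $b_{ij}=2\sqrt{\lambda_{n_1}}$ for the wrap-around pair; the point is that one must \emph{not} differentiate the bare $o(1)$ in \eqref{eqnnnew1123}--\eqref{eqnnnew1125} but re-run the estimate. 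The second ingredient is the two geometric derivatives: from $|\eta_{t,j}-\eta_{s,i}|^2=\rho_j^2+\rho_i^2-2\rho_j\rho_i\,\xi_{t,j}\cdot\xi_{s,i}$ and $\rho_j=\rho+O(1)$ one gets, at leading order, $\partial_{\theta_j}|\eta_{t,j}-\eta_{s,i}|=\rho\cos\!\big(\tfrac{\theta_j-\theta_i}{2}+\tfrac{(t-s)\pi}{\vartheta}\big)(1+h.o.t.)$, of size $\rho$, while $\partial_{\rho_j}|\eta_{t,j}-\eta_{s,i}|=\sqrt{\tfrac12(1-\xi_{t,j}\cdot\xi_{s,i})}\,(1+h.o.t.)$, of size the angular gap $\sim\vartheta^{-1}$. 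The factor $\rho\vartheta$ between the two is exactly why every $\partial_{\rho_j}$-formula in the statement carries an extra negative power of $\rho$ relative to its $\partial_{\theta_j}$-counterpart, and why an in-block term such as $g_{ij}(\rho\alpha_j)\,\partial_{\rho_j}|\eta|\sim\rho^{-1/2}\alpha_j^{1/2}e^{-2\sqrt{\lambda_{n_\tau}}\rho\alpha_j}$ appears with the displayed powers.

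It then remains to identify which pairs $(s,i)$ contribute at leading order. For a fixed peak $\eta_{t,j}$ the two peaks of other components closest to it are $\eta_{t,j-1}$, at angular gap $\alpha_{j-1}$, and $\eta_{t,j+1}$, at angular gap $\alpha_j$ — read through the wrap (gap $\alpha_d$, profile \eqref{eqnnnew1125}) when $j=1$ (left neighbour) or $j=d$ (right neighbour); every other peak of a different component is at angular distance larger by at least a fixed positive multiple of $2\pi/\vartheta$, hence contributes a factor $e^{-c\rho/\vartheta}=\vartheta^{-c'}\to0$ smaller. Here I would reuse, essentially verbatim, the case-splitting of Lemma~\ref{lem0001} (the $\varsigma,\delta$ decomposition together with the pinching condition \eqref{pinch} and $\rho\sim\vartheta\log\vartheta$) to absorb into the $h.o.t.$ the non-nearest-neighbour pairs, the cross terms $\widetilde{w}_{s,i}\widetilde{w}_{s',i}$ hidden in $W_i^2$, the triple products $\int\widetilde{w}_{s,i}^2\widetilde{w}_{t,j}\partial_{\theta_j}\widetilde{w}_{l,j}$ with $l\ne t$, and the $o(1)$ and $O(1)$ errors coming from the profile and from $\rho_j=\rho+O(1)$; the auxiliary hypotheses $\alpha_{n_{\tau+1}}<\alpha_{n_\tau}$, $\alpha_{n_k}=(1+o(1))\alpha_{n_1}$ and the max--min inequality for $\{\sqrt{\lambda_{n_\tau}}\alpha_{n_\tau}\}$ are precisely what keeps the two surviving nearest-neighbour terms of comparable size (so neither is itself $h.o.t.$) and prevents a cross-boundary interaction from being swamped by an in-block one, or vice versa. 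Finally one substitutes: items (1)--(5) are the five possible placements of the pair $(j-1,j+1)$ relative to the block decomposition \eqref{eq0002} and the wrap point — $j$ interior to $\mathfrak{n}_\tau$ (or $k=1$): both neighbours in-block, profile \eqref{eqnnnew1123}; $j=n_{\tau-1}+1$, $\tau>1$: the right neighbour in-block (or, for a singleton block, across the next boundary) and the left neighbour across the boundary at $n_{\tau-1}$, profile \eqref{eqnnnew1124}; $j=1$ (whether or not the first block is a singleton); and the mirror situations $j=n_\tau$ ($\tau<k$) and $j=d$ — and inserting \eqref{eqnnnew1123}--\eqref{eqnnnew1125} together with $g_{ij}'=-b_{ij}g_{ij}(1+o(1))$ and the two geometric derivatives produces each displayed formula, with $c_1,c_2$ (resp. $c_1',c_2'$) explicit positive multiples of $\beta_{j,j+1}$, $\beta_{j-1,j}$ and of $\sqrt{\lambda_{n_\tau}}$, $D_\tau$, $D_\tau'$, $D_k'$.

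\emph{Main obstacle.} The genuinely delicate steps are the asymptotics of the profile \emph{derivative} $g_{ij}'$ (establishing that differentiating only costs the factor $-b_{ij}$, up to a true $o(1)$, which forces one to rerun the interaction estimate rather than differentiate a bare error term) and the companion check that the $l\ne t$ triple products and all non-nearest-neighbour pairs are of strictly lower order under \eqref{pinch} and the auxiliary hypotheses on the $\alpha$'s; once these are in place, the derivation of the five cases is long but mechanical.
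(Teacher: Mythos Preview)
Your treatment of the diagonal part ($l=t$) is correct and essentially equivalent to the paper's, just packaged differently: where you pull the derivative outside and apply the chain rule to $g_{ij}(|\eta_{t,j}-\eta_{s,i}|)$, the paper invokes \cite[Lemma~A.2]{PV22} and the computations of \cite[Lemma~2.6]{PV22} to write each summand directly as the interaction profile times the geometric factor $\langle(\xi_{s,i}-\xi_{t,j})/|\xi_{s,i}-\xi_{t,j}|,\rho_j\xi_{t,j}^\perp\rangle$ (resp.\ $\langle\cdot,\xi_{t,j}\rangle$). Your chain-rule formulation is arguably cleaner, and your observation that one must rerun the interaction estimate for $g_{ij}'$ rather than differentiate the bare $o(1)$ is exactly right.

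The gap is in the off-diagonal part ($l\ne t$). Your plan to ``reuse, essentially verbatim, the case-splitting of Lemma~\ref{lem0001}'' does not work here: Lemma~\ref{lem0001} gives pointwise bounds on $E_{j,3}$ as a whole, not on the individual three-bump integrals $\int\widetilde w_{s,i}^2\widetilde w_{t,j}\partial_{\theta_j}\widetilde w_{l,j}\,dx$, and the $\varsigma,\delta$ near/far decomposition is not the right tool for these. The paper instead first symmetrises the sum over $(t,l)$, then controls each three-bump integral via \cite[Lemma~5.1]{ACR07} by $o(e^{-\sqrt{\lambda_{n_\tau}}\eta_{**}})$ with $\eta_{**}$ just below the Fermat sum $\widetilde\eta=\min_y\bigl(|y-\eta_{s,i}|+|y-\eta_{t,j}|+|y-\eta_{l,j}|\bigr)$; the pinching condition $\lambda_{n_k}<4\lambda_{n_1}$ is what makes that lemma applicable. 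One then sorts the triples into ``good'' ones (Fermat sum $\gtrsim\rho$) and ``bad'' ones (one peak is the middle point), and for $d\ge3$ the angular separation $\alpha_j\sim1/\vartheta$ pushes even the bad ones to $h.o.t.$ For $d=2$, however, four specific bad integrals such as $\int\widetilde w_{t,1}^2\widetilde w_{t,2}\widetilde w_{t-1,2}\,dx$ survive the Fermat bound, and the paper computes them by hand: after translating and rotating so the three almost-collinear peaks lie on the $x_1$-axis, it splits $\bbr^N$ into six regions and evaluates each piece to extract the sharp rate $\widehat\eta^{1-N}e^{-\sqrt{\lambda_2}\widehat\eta/\alpha}$ (with $\alpha\le\tfrac12$), which only then is seen to be $h.o.t.$ (or, when $\lambda_1<\lambda_2$, comparable to the main term). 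This $d=2$ analysis is genuinely ad hoc and is not anticipated in your outline; without it the lemma is incomplete in the two-component case.
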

	\begin{proof}
		By \cite[Lemma~A.2]{PV22} and similar calculations in \cite[Lemma~2.6]{PV22},
		\begin{eqnarray*}
			&&\sum_{t=1}^{\vartheta}\sum_{i=1;i\not=j}^{d}\sum_{s=1}^{\vartheta}
			\int_{\bbr^N}\widetilde{w}_{s,i}^2\widetilde{w}_{t,j}\partial_{\theta_{j}}\widetilde{w}_{t,j}dx\\
			&\sim&\sum_{i\in\mathfrak{n}_{\tau'};\tau'<\tau}\sum_{t=1}^{\vartheta}\sum_{s=1}^{\vartheta}|\eta_{s,i}-\eta_{t,j}|^{-1}e^{-2\sqrt{\lambda_{n_\tau'}}|\eta_{s,i}-\eta_{t,j}|}
			\langle\frac{\xi_{s,i}-\xi_{t,j}}{|\xi_{s,i}-\xi_{t,j}|}, \rho_j\xi_{t,j}^{\perp}\rangle\\
			&&+\sum_{i\in\mathfrak{n}_\tau}\sum_{t=1}^{\vartheta}\sum_{s=1}^{\vartheta}|\eta_{s,i}-\eta_{t,j}|^{-\frac12}e^{-2\sqrt{\lambda_{n_\tau}}|\eta_{s,i}-\eta_{t,j}|}
			\langle\frac{\xi_{s,i}-\xi_{t,j}}{|\xi_{s,i}-\xi_{t,j}|}, \rho_j\xi_{t,j}^{\perp}\rangle\\
			&&+\sum_{i\in\mathfrak{n}_{\tau'};\tau'>\tau}\sum_{t=1}^{\vartheta}\sum_{s=1}^{\vartheta}|\eta_{s,i}-\eta_{t,j}|^{-1}e^{-2\sqrt{\lambda_{n_\tau}}|\eta_{s,i}-\eta_{t,j}|}
			\langle\frac{\xi_{s,i}-\xi_{t,j}}{|\xi_{s,i}-\xi_{t,j}|}, \rho_j\xi_{t,j}^{\perp}\rangle
		\end{eqnarray*}
		for $N=2$ and
		\begin{eqnarray*}
			&&\sum_{t=1}^{\vartheta}\sum_{i=1;i\not=j}^{d}\sum_{s=1}^{\vartheta}
			\int_{\bbr^N}\widetilde{w}_{s,i}^2\widetilde{w}_{t,j}\partial_{\theta_{j}}\widetilde{w}_{t,j}dx\\
			&\sim&\sum_{i\in\mathfrak{n}_{\tau'};\tau'<\tau}\sum_{t=1}^{\vartheta}\sum_{s=1}^{\vartheta}|\eta_{s,i}-\eta_{t,j}|^{-2}e^{-2\sqrt{\lambda_{n_\tau'}}|\eta_{s,i}-\eta_{t,j}|}
			\langle\frac{\xi_{s,i}-\xi_{t,j}}{|\xi_{s,i}-\xi_{t,j}|}, \rho_j\xi_{t,j}^{\perp}\rangle\\
			&&+\sum_{i\in\mathfrak{n}_\tau}\sum_{t=1}^{\vartheta}\sum_{s=1}^{\vartheta}|\eta_{s,i}-\eta_{t,j}|^{-2} e^{-2\sqrt{\lambda_{n_\tau}}|\eta_{s,i}-\eta_{t,j}|}\log|\eta_{s,i}-\eta_{t,j}|
			\langle\frac{\xi_{s,i}-\xi_{t,j}}{|\xi_{s,i}-\xi_{t,j}|}, \rho_j\xi_{t,j}^{\perp}\rangle\\
			&&+\sum_{i\in\mathfrak{n}_{\tau'};\tau'>\tau}\sum_{t=1}^{\vartheta}\sum_{s=1}^{\vartheta}|\eta_{s,i}-\eta_{t,j}|^{-2}e^{-2\sqrt{\lambda_{n_\tau}}|\eta_{s,i}-\eta_{t,j}|}
			\langle\frac{\xi_{s,i}-\xi_{t,j}}{|\xi_{s,i}-\xi_{t,j}|}, \rho_j\xi_{t,j}^{\perp}\rangle
		\end{eqnarray*}
		for $N=3$, while
		\begin{eqnarray*}
			&&\sum_{t=1}^{\vartheta}\sum_{i=1;i\not=j}^{d}\sum_{s=1}^{\vartheta}
			\int_{\bbr^N}\widetilde{w}_{s,i}^2\widetilde{w}_{t,j}\partial_{\rho_j}\widetilde{w}_{t,j}dx\\
			&\sim&\sum_{i\in\mathfrak{n}_{\tau'};\tau'<\tau}\sum_{t=1}^{\vartheta}\sum_{s=1}^{\vartheta}|\eta_{s,i}-\eta_{t,j}|^{-1}e^{-2\sqrt{\lambda_{n_\tau'}}|\eta_{s,i}-\eta_{t,j}|}
			\langle\frac{\xi_{s,i}-\xi_{t,j}}{|\xi_{s,i}-\xi_{t,j}|}, \xi_{t,j}\rangle\\
			&&+\sum_{i\in\mathfrak{n}_\tau}\sum_{t=1}^{\vartheta}\sum_{s=1}^{\vartheta}|\eta_{s,i}-\eta_{t,j}|^{-\frac12}e^{-2\sqrt{\lambda_{n_\tau}}|\eta_{s,i}-\eta_{t,j}|}
			\langle\frac{\xi_{s,i}-\xi_{t,j}}{|\xi_{s,i}-\xi_{t,j}|}, \xi_{t,j}\rangle\\
			&&+\sum_{i\in\mathfrak{n}_{\tau'};\tau'>\tau}\sum_{t=1}^{\vartheta}\sum_{s=1}^{\vartheta}|\eta_{s,i}-\eta_{t,j}|^{-1}e^{-2\sqrt{\lambda_{n_\tau}}|\eta_{s,i}-\eta_{t,j}|}
			\langle\frac{\xi_{s,i}-\xi_{t,j}}{|\xi_{s,i}-\xi_{t,j}|}, \xi_{t,j}\rangle
		\end{eqnarray*}
		for $N=2$ and
		\begin{eqnarray*}
			&&\sum_{t=1}^{\vartheta}\sum_{i=1;i\not=j}^{d}\sum_{s=1}^{\vartheta}
			\int_{\bbr^N}\widetilde{w}_{s,i}^2\widetilde{w}_{t,j}\partial_{\rho_j}\widetilde{w}_{t,j}dx\\
			&\sim&\sum_{i\in\mathfrak{n}_{\tau'};\tau'<\tau}\sum_{t=1}^{\vartheta}\sum_{s=1}^{\vartheta}|\eta_{s,i}-\eta_{t,j}|^{-2}e^{-2\sqrt{\lambda_{n_\tau'}}|\eta_{s,i}-\eta_{t,j}|}
			\langle\frac{\xi_{s,i}-\xi_{t,j}}{|\xi_{s,i}-\xi_{t,j}|}, \xi_{t,j}\rangle\\
			&&+\sum_{i\in\mathfrak{n}_\tau}\sum_{t=1}^{\vartheta}\sum_{s=1}^{\vartheta}|\eta_{s,i}-\eta_{t,j}|^{-2} e^{-2\sqrt{\lambda_{n_\tau}}|\eta_{s,i}-\eta_{t,j}|}\log|\eta_{s,i}-\eta_{t,j}|
			\langle\frac{\xi_{s,i}-\xi_{t,j}}{|\xi_{s,i}-\xi_{t,j}|}, \xi_{t,j}\rangle\\
			&&+\sum_{i\in\mathfrak{n}_{\tau'};\tau'>\tau}\sum_{t=1}^{\vartheta}\sum_{s=1}^{\vartheta}|\eta_{s,i}-\eta_{t,j}|^{-2}e^{-2\sqrt{\lambda_{n_\tau}}|\eta_{s,i}-\eta_{t,j}|}
			\langle\frac{\xi_{s,i}-\xi_{t,j}}{|\xi_{s,i}-\xi_{t,j}|}, \xi_{t,j}\rangle
		\end{eqnarray*}
		for $N=3$.  Note that
		\begin{eqnarray*}
			&\langle\frac{\xi_{1,i}-\xi_{1,j}}{|\xi_{1,i}-\xi_{1,j}|}, \xi_{1,j}\rangle=\frac{\cos(\theta_{j+1}-\theta_{i+1})-1}{|\xi_{1,i}-\xi_{1,j}|}\sim-(\theta_{j+1}-\theta_{i+1}),\\
			&\langle\frac{\xi_{1,i}-\xi_{1,j}}{|\xi_{1,i}-\xi_{1,j}|}, \xi_{1,j}^{\perp}\rangle=\frac{\sin(\theta_{j+1}-\theta_{i+1})}{|\xi_{1,i}-\xi_{1,j}|}\sim1,
		\end{eqnarray*}
		and
		\begin{eqnarray*}
			|\eta_{1,i}-\eta_{1,j}|=\rho_j|\theta_{j+1}-\theta_{i+1}|+h.o.t.,
		\end{eqnarray*}
		Thus,
		\begin{enumerate}
			\item[$(1)$] for $j\not=n_{\tau-1}+1$, $j\not=n_{\tau}$ or $k=1$,
			\begin{eqnarray*}
				&&\vartheta^{-1}\sum_{t,i,s;i\not=j}
				\int_{\bbr^N}\widetilde{w}_{s,i}^2\widetilde{w}_{t,j}\partial_{\theta_{j}}\widetilde{w}_{t,j}dx\\
				&=&\left\{\aligned&\rho^{\frac{1}{2}}(c_1\alpha_{j}^{-\frac12}e^{-2\sqrt{\lambda_{n_\tau}}\rho\alpha_{j}}
				-c_2\alpha_{j-1}^{-\frac12}e^{-2\sqrt{\lambda_{n_\tau}}\rho\alpha_{j-1}})+h.o.t.,\quad N=2,\\
				&\rho^{-1}(\frac{c_1}{\alpha^2_{j}}e^{-2\sqrt{\lambda_{n_\tau}}\rho\alpha_{j}}\log(\rho\alpha_{j})
				-\frac{c_2}{\alpha^2_{j-1}}e^{-2\sqrt{\lambda_{n_\tau}}\rho\alpha_{j-1}}\log(\rho\alpha_{j-1}))+h.o.t.,\quad N=3
				\endaligned\right.
			\end{eqnarray*}
			and
			\begin{eqnarray*}
				&&\vartheta^{-1}\sum_{t,i,s;i\not=j}\int_{\bbr^N}\widetilde{w}_{s,i}^2\widetilde{w}_{t,j}\partial_{\rho_j}\widetilde{w}_{t,j}dx\\
				&=&\left\{\aligned&\rho^{-\frac{1}{2}}(c_1'\alpha_{j}^{\frac12}e^{-2\sqrt{\lambda_{n_\tau}}\rho\alpha_{j}}
				-c_2'\alpha_{j-1}^{\frac12}e^{-2\sqrt{\lambda_{n_\tau}}\rho\alpha_{j-1}})+h.o.t.,\quad N=2,\\
				&\rho^{-2}(\frac{c_1'}{\alpha_{j}}e^{-2\sqrt{\lambda_{n_\tau}}\rho\alpha_{j}}\log(\rho\alpha_{j})
				-\frac{c_2'}{\alpha_{j-1}}e^{-2\sqrt{\lambda_{n_\tau}}\rho\alpha_{j-1}}\log(\rho\alpha_{j-1}))+h.o.t.,\quad N=3.
				\endaligned\right.
			\end{eqnarray*}
			\item[$(2)$] For $j=n_{\tau-1}+1$ with $\tau>1$, if $n_{\tau}-n_{\tau-1}>1$ then
			\begin{eqnarray*}
				&&\vartheta^{-1}\sum_{t,i,s;i\not=j}
				\int_{\bbr^N}\widetilde{w}_{s,i}^2\widetilde{w}_{t,j}\partial_{\theta_{j}}\widetilde{w}_{t,j}dx\\
				&=&\left\{\aligned&c_1\rho^{\frac{1}{2}}\alpha_{j}^{-\frac12}e^{-2\sqrt{\lambda_{n_\tau}}\rho\alpha_{j}}
				-c_2e^{-2\sqrt{\lambda_{n_{\tau-1}}}\rho\alpha_{j-1}}+h.o.t.,\quad N=2,\\
				&\rho^{-1}(\frac{c_1}{\alpha_{j}^2}e^{-2\sqrt{\lambda_{n_\tau}}\rho\alpha_{j}}\log(\rho\alpha_{j})
				-\frac{c_2}{\alpha_{j-1}^2}e^{-2\sqrt{\lambda_{n_\tau-1}}\rho\alpha_{j-1}})+h.o.t.,\quad N=3
				\endaligned\right.
			\end{eqnarray*}
			and
			\begin{eqnarray*}
				&&\vartheta^{-1}\sum_{t,i,s;i\not=j}\int_{\bbr^N}\widetilde{w}_{s,i}^2\widetilde{w}_{t,j}\partial_{\rho_j}\widetilde{w}_{t,j}dx\\
				&=&\left\{\aligned&c_1\rho^{-\frac{1}{2}}\alpha_{j}^{\frac12}e^{-2\sqrt{\lambda_{n_\tau}}\rho\alpha_{j}}
				-c_2\rho^{-1}e^{-2\sqrt{\lambda_{n_{\tau-1}}}\rho\alpha_{j-1}}+h.o.t.,\quad N=2,\\
				&\rho^{-2}(\frac{c_1}{\alpha_{j}}e^{-2\sqrt{\lambda_{n_\tau}}\rho\alpha_{j+1}}\log(\rho\alpha_{j})
				-\frac{c_2}{\alpha_{j-1}}e^{-2\sqrt{\lambda_{n_\tau-1}}\rho\alpha_{j-1}})+h.o.t.,\quad N=3,
				\endaligned\right.
			\end{eqnarray*}
			while if $n_{\tau}-n_{\tau-1}=1$ then
			\begin{eqnarray*}
				&&\vartheta^{-1}\sum_{t,i,s;i\not=j}
				\int_{\bbr^N}\widetilde{w}_{s,i}^2\widetilde{w}_{t,j}\partial_{\theta_{j}}\widetilde{w}_{t,j}dx\\
				&=&\left\{\aligned&(\frac{c_1}{\alpha_{j}}e^{-2\sqrt{\lambda_{n_\tau}}\rho\alpha_{j}}
				-\frac{c_2}{\alpha_{j-1}}e^{-2\sqrt{\lambda_{n_{\tau-1}}}\rho\alpha_{j-1}})+h.o.t.,\quad N=2,\\
				&\rho^{-1}(\frac{c_1}{\alpha_{j+1}^2}e^{-2\sqrt{\lambda_{n_\tau}}\rho\alpha_{j+1}}
				-\frac{c_2}{\alpha_{j}^2}e^{-2\sqrt{\lambda_{n_\tau-1}}\rho\alpha_{j}})+h.o.t.,\quad N=3
				\endaligned\right.
			\end{eqnarray*}
			and
			\begin{eqnarray*}
				&&\vartheta^{-1}\sum_{t,i,s;i\not=j}\int_{\bbr^N}\widetilde{w}_{s,i}^2\widetilde{w}_{t,j}\partial_{\rho_j}\widetilde{w}_{t,j}dx\\
				&=&\left\{\aligned&\rho^{-1}(c_1e^{-2\sqrt{\lambda_{n_\tau}}\rho\alpha_{j}}
				-c_2e^{-2\sqrt{\lambda_{n_{\tau-1}}}\rho\alpha_{j-1}})+h.o.t.,\quad N=2,\\
				&\rho^{-2}(\frac{c_1}{\alpha_{j}}e^{-2\sqrt{\lambda_{n_\tau}}\rho\alpha_{j}}
				-\frac{c_2}{\alpha_{j-1}}e^{-2\sqrt{\lambda_{n_\tau-1}}\rho\alpha_{j-1}})+h.o.t.,\quad N=3.
				\endaligned\right.
			\end{eqnarray*}
			\item[$(3)$] For $j=1$, if $n_1>1$ then
			\begin{eqnarray*}
				&&\vartheta^{-1}\sum_{t,i,s;i\not=1}
				\int_{\bbr^N}\widetilde{w}_{s,i}^2\widetilde{w}_{t,1}\partial_{\theta_{1}}\widetilde{w}_{t,1}dx\\
				&=&\left\{\aligned&c_1\rho^{\frac{1}{2}}\alpha_{1}^{-\frac12}e^{-2\sqrt{\lambda_{n_1}}\rho\alpha_{1}}
				-\frac{c_2}{\alpha_d}e^{-2\sqrt{\lambda_{n_{1}}}\rho\alpha_{d}}+h.o.t.,\quad N=2,\\
				&\rho^{-1}(\frac{c_1}{\alpha_{1}^2}e^{-2\sqrt{\lambda_{n_1}}\rho\alpha_{1}}\log(\rho\alpha_{1})
				-\frac{c_2}{\alpha_{d}^2}e^{-2\sqrt{\lambda_{n_1}}\rho\alpha_{d}})+h.o.t.,\quad N=3
				\endaligned\right.
			\end{eqnarray*}
			and
			\begin{eqnarray*}
				&&\vartheta^{-1}\sum_{t,i,s;i\not=1}\int_{\bbr^N}\widetilde{w}_{s,i}^2\widetilde{w}_{t,1}\partial_{\rho_1}\widetilde{w}_{t,1}dx\\
				&=&\left\{\aligned&c_1\rho^{-\frac{1}{2}}\alpha_{1}^{\frac12}e^{-2\sqrt{\lambda_{n_1}}\rho\alpha_{1}}
				-c_2\rho^{-1}e^{-2\sqrt{\lambda_{n_{1}}}\rho\alpha_{d}}+h.o.t.,\quad N=2,\\
				&\rho^{-2}(\frac{c_1}{\alpha_1}e^{-2\sqrt{\lambda_{n_1}}\rho\alpha_{1}}\log(\rho\alpha_{1})
				-\frac{c_2}{\alpha_d}e^{-2\sqrt{\lambda_{n_1}}\rho\alpha_{d}})+h.o.t.,\quad N=3,
				\endaligned\right.
			\end{eqnarray*}
			while if $n_{1}=1$ then
			\begin{eqnarray*}
				&&\vartheta^{-1}\sum_{t,i,s;i\not=1}
				\int_{\bbr^N}\widetilde{w}_{s,i}^2\widetilde{w}_{t,1}\partial_{\theta_{1}}\widetilde{w}_{t,1}dx\\
				&=&\left\{\aligned&(\frac{c_1}{\alpha_1}e^{-2\sqrt{\lambda_{n_1}}\rho\alpha_{1}}
				-\frac{c_2}{\alpha_d}e^{-2\sqrt{\lambda_{n_{1}}}\rho\alpha_{d}})+h.o.t.,\quad N=2,\\
				&\rho^{-1}(\frac{c_1}{\alpha_{1}^2}e^{-2\sqrt{\lambda_{n_1}}\rho\alpha_{1}}
				-\frac{c_2}{\alpha_{d}^2}e^{-2\sqrt{\lambda_{n_1}}\rho\alpha_{d}})+h.o.t.,\quad N=3
				\endaligned\right.
			\end{eqnarray*}
			and
			\begin{eqnarray*}
				&&\vartheta^{-1}\sum_{t,i,s;i\not=1}\int_{\bbr^N}\widetilde{w}_{s,i}^2\widetilde{w}_{t,1}\partial_{\rho_1}\widetilde{w}_{t,1}dx\\
				&=&\left\{\aligned&\rho^{-1}(c_1e^{-2\sqrt{\lambda_{n_1}}\rho\alpha_{1}}
				-c_2e^{-2\sqrt{\lambda_{n_{1}}}\rho\alpha_{d}})+h.o.t.,\quad N=2,\\
				&\rho^{-2}(\frac{c_1}{\alpha_1}e^{-2\sqrt{\lambda_{n_1}}\rho\alpha_{1}}
				-\frac{c_2}{\alpha_d}e^{-2\sqrt{\lambda_{n_1}}\rho\alpha_{d}})+h.o.t.,\quad N=3.
				\endaligned\right.
			\end{eqnarray*}
			\item[$(4)$] For $j=n_{\tau}$ with $\tau<k$, if $n_{\tau}-n_{\tau-1}>1$ then
			\begin{eqnarray*}
				&&\vartheta^{-1}\sum_{t,i,s;i\not=j}
				\int_{\bbr^N}\widetilde{w}_{s,i}^2\widetilde{w}_{t,j}\partial_{\theta_{j}}\widetilde{w}_{t,j}dx\\
				&=&\left\{\aligned&\frac{c_1}{\alpha_{j}}e^{-2\sqrt{\lambda_{n_{\tau}}}\rho\alpha_{j}}
				-\rho^{\frac{1}{2}}c_2\alpha_{j-1}^{-\frac12}e^{-2\sqrt{\lambda_{n_\tau}}\rho\alpha_{j-1}}+h.o.t.,\quad N=2,\\
				&\rho^{-1}(\frac{c_1}{\alpha_{j}^2}e^{-2\sqrt{\lambda_{n_\tau}}\rho\alpha_{j}}
				-\frac{c_1}{\alpha_{j-1}^2}e^{-2\sqrt{\lambda_{n_\tau}}\rho\alpha_{j-1}}\log(\rho\alpha_{j-1}))+h.o.t.,\quad N=3
				\endaligned\right.
			\end{eqnarray*}
			and
			\begin{eqnarray*}
				&&\vartheta^{-1}\sum_{t,i,s;i\not=j}\int_{\bbr^N}\widetilde{w}_{s,i}^2\widetilde{w}_{t,j}\partial_{\rho_j}\widetilde{w}_{t,j}dx\\
				&=&\left\{\aligned&c_1\rho^{-1}e^{-2\sqrt{\lambda_{n_{\tau}}}\rho\alpha_{j}}
				-c_2\rho^{-\frac{1}{2}}\alpha_{j-1}^{\frac12}e^{-2\sqrt{\lambda_{n_\tau}}\rho\alpha_{j-1}}+h.o.t.,\quad N=2,\\
				&\rho^{-2}(\frac{c_1}{\alpha_{j}}e^{-2\sqrt{\lambda_{n_\tau}}\rho\alpha_{j}}
				-\frac{c_2}{\alpha_{j-1}}e^{-2\sqrt{\lambda_{n_\tau}}\rho\alpha_{j-1}}\log(\rho\alpha_{j-1}))+h.o.t.,\quad N=3,
				\endaligned\right.
			\end{eqnarray*}
			while if $n_{\tau}-n_{\tau-1}=1$ then
			\begin{eqnarray*}
				&&\vartheta^{-1}\sum_{t,i,s;i\not=j}
				\int_{\bbr^N}\widetilde{w}_{s,i}^2\widetilde{w}_{t,j}\partial_{\theta_{j}}\widetilde{w}_{t,j}dx\\
				&=&\left\{\aligned&(\frac{c_1}{\alpha_{j}}e^{-2\sqrt{\lambda_{n_\tau}}\rho\alpha_{j}}
				-\frac{c_2}{\alpha_{j-1}}e^{-2\sqrt{\lambda_{n_{\tau}}}\rho\alpha_{j-1}})+h.o.t.,\quad N=2,\\
				&\rho^{-1}(\frac{c_1}{\alpha_{j}^2}e^{-2\sqrt{\lambda_{n_\tau}}\rho\alpha_{j}}
				-\frac{c_2}{\alpha_{j-1}^2}e^{-2\sqrt{\lambda_{n_\tau}}\rho\alpha_{j-1}})+h.o.t.,\quad N=3
				\endaligned\right.
			\end{eqnarray*}
			and
			\begin{eqnarray*}
				&&\vartheta^{-1}\sum_{t,i,s;i\not=j}\int_{\bbr^N}\widetilde{w}_{s,i}^2\widetilde{w}_{t,j}\partial_{\rho_j}\widetilde{w}_{t,j}dx\\
				&=&\left\{\aligned&\rho^{-1}(c_1e^{-2\sqrt{\lambda_{n_\tau}}\rho\alpha_{j}}
				-c_2e^{-2\sqrt{\lambda_{n_{\tau}}}\rho\alpha_{j-1}})+h.o.t.,\quad N=2,\\
				&\rho^{-2}(\frac{c_1}{\alpha_{j}}e^{-2\sqrt{\lambda_{n_\tau}}\rho\alpha_{j}}
				-\frac{c_2}{\alpha_{j-1}}e^{-2\sqrt{\lambda_{n_\tau}}\rho\alpha_{j-1}})+h.o.t.,\quad N=3.
				\endaligned\right.
			\end{eqnarray*}
			\item[$(5)$] For $j=d$, if $n_{k}-n_{k-1}>1$ then
			\begin{eqnarray*}
				&&\vartheta^{-1}\sum_{t,i,s;i\not=d}
				\int_{\bbr^N}\widetilde{w}_{s,i}^2\widetilde{w}_{t,d}\partial_{\theta_{d}}\widetilde{w}_{t,d}dx\\
				&=&\left\{\aligned&\frac{c_1}{\alpha_{d}}e^{-2\sqrt{\lambda_{n_{1}}}\rho\alpha_{d}}
				-c_2\rho^{\frac{1}{2}}\alpha_{d-1}^{-\frac12}e^{-2\sqrt{\lambda_{n_k}}\rho\alpha_{d-1}}+h.o.t.,\quad N=2,\\
				&\rho^{-1}(\frac{c_1}{\alpha_{d}^2}e^{-2\sqrt{\lambda_{n_1}}\rho\alpha_{d}}
				-\frac{c_1}{\alpha_{d-1}^2}e^{-2\sqrt{\lambda_{n_k}}\rho\alpha_{d-1}}\log(\rho\alpha_{d-1}))+h.o.t.,\quad N=3
				\endaligned\right.
			\end{eqnarray*}
			and
			\begin{eqnarray*}
				&&\vartheta^{-1}\sum_{t,i,s;i\not=d}\int_{\bbr^N}\widetilde{w}_{s,i}^2\widetilde{w}_{t,d}\partial_{\rho_{d}}\widetilde{w}_{t,d}dx\\
				&=&\left\{\aligned&c_1\rho^{-1}e^{-2\sqrt{\lambda_{n_{1}}}\rho\alpha_{d}}
				-c_2\rho^{-\frac{1}{2}}\alpha_{d-1}^{\frac12}e^{-2\sqrt{\lambda_{n_k}}\rho\alpha_{d-1}}+h.o.t.,\quad N=2,\\
				&\rho^{-2}(\frac{c_1}{\alpha_{d}}e^{-2\sqrt{\lambda_{n_1}}\rho\alpha_{d}}
				-\frac{c_2}{\alpha_{d-1}}e^{-2\sqrt{\lambda_{n_k}}\rho\alpha_{d-1}}\log(\rho\alpha_{d-1}))+h.o.t.,\quad N=3,
				\endaligned\right.
			\end{eqnarray*}
			while if $n_{k}-n_{k-1}=1$ then
			\begin{eqnarray*}
				&&\vartheta^{-1}\sum_{t,i,s;i\not=d}
				\int_{\bbr^N}\widetilde{w}_{s,i}^2\widetilde{w}_{t,d}\partial_{\theta_{d}}\widetilde{w}_{t,d}dx\\
				&=&\left\{\aligned&(\frac{c_1}{\alpha_{d}}e^{-2\sqrt{\lambda_{n_1}}\rho\alpha_{d}}
				-\frac{c_2}{\alpha_{d-1}}e^{-2\sqrt{\lambda_{n_{k-1}}}\rho\alpha_{d-1}})+h.o.t.,\quad N=2,\\
				&\rho^{-1}(\frac{c_1}{\alpha_{d}^2}e^{-2\sqrt{\lambda_{n_1}}\rho\alpha_{d}}
				-\frac{c_2}{\alpha_{d-1}^2}e^{-2\sqrt{\lambda_{n_{k-1}}}\rho\alpha_{d-1}})+h.o.t.,\quad N=3
				\endaligned\right.
			\end{eqnarray*}
			and
			\begin{eqnarray*}
				&&\vartheta^{-1}\sum_{t,i,s;i\not=d}\int_{\bbr^N}\widetilde{w}_{s,i}^2\widetilde{w}_{t,d}\partial_{\rho_{d}}\widetilde{w}_{t,d}dx\\
				&=&\left\{\aligned&\rho^{-1}(c_1e^{-2\sqrt{\lambda_{n_1}}\rho\alpha_{d}}
				-c_2e^{-2\sqrt{\lambda_{n_{k-1}}}\rho\alpha_{d-1}})+h.o.t.,\quad N=2,\\
				&\rho^{-2}(\frac{c_1}{\alpha_{d}}e^{-2\sqrt{\lambda_{n_1}}\rho\alpha_{d}}
				-\frac{c_2}{\alpha_{d-1}}e^{-2\sqrt{\lambda_{n_{k-1}}}\rho\alpha_{d-1}})+h.o.t.,\quad N=3.
				\endaligned\right.
			\end{eqnarray*}
		\end{enumerate}
		On the other hand, for every $s$ and $i$, by symmetry,
		\begin{eqnarray*}
			\sum_{t=1}^{\vartheta}\sum_{l=1;l\not=t}^{\vartheta}
			\int_{\bbr^N}\widetilde{w}_{s,i}^2\widetilde{w}_{t,j}\partial_{\rho_j}\widetilde{w}_{l,j}dx&=&\sum_{t=1}^{\vartheta}\sum_{i=1;i\not=j}^{d}\sum_{l=1;l>t}^{\vartheta}
			\int_{\bbr^N}\widetilde{w}_{s,i}^2\partial_{\rho_j}(\widetilde{w}_{t,j}\widetilde{w}_{l,j})dx\\
			&=&\sum_{t,l;|s-l|\leq|s-t|}
			\int_{\bbr^N}\widetilde{w}_{s,i}^2\partial_{\rho_j}(\widetilde{w}_{t,j}\widetilde{w}_{l,j})dx\\
			&&+\sum_{t,l;|s-t|\leq|s-l|}
			\int_{\bbr^N}\widetilde{w}_{s,i}^2\partial_{\rho_j}(\widetilde{w}_{t,j}\widetilde{w}_{l,j})dx\\
			&=&2\sum_{t,l;|s-t|\leq|s-l|}
			\int_{\bbr^N}\widetilde{w}_{s,i}^2\partial_{\rho_j}(\widetilde{w}_{t,j}\widetilde{w}_{l,j})dx
		\end{eqnarray*}
		and
		\begin{eqnarray*}
			\sum_{t=1}^{\vartheta}\sum_{l=1;l\not=t}^{\vartheta}
			\int_{\bbr^N}\widetilde{w}_{s,i}^2\widetilde{w}_{t,j}\partial_{\theta_{j}}\widetilde{w}_{l,j}dx&=&\sum_{t=1}^{\vartheta}\sum_{i=1;i\not=j}^{d}\sum_{l=1;l>t}^{\vartheta}
			\int_{\bbr^N}\widetilde{w}_{s,i}^2\partial_{\theta_{j}}(\widetilde{w}_{t,j}\widetilde{w}_{l,j})dx\\
			&=&\sum_{t,l;|s-l|\leq|s-t|}
			\int_{\bbr^N}\widetilde{w}_{s,i}^2\partial_{\theta_{j}}(\widetilde{w}_{t,j}\widetilde{w}_{l,j})dx\\
			&&+\sum_{t,l;|s-t|\leq|s-l|}
			\int_{\bbr^N}\widetilde{w}_{s,i}^2\partial_{\theta_{j}}(\widetilde{w}_{t,j}\widetilde{w}_{l,j})dx\\
			&=&2\sum_{t,l;|s-t|\leq|s-l|}
			\int_{\bbr^N}\widetilde{w}_{s,i}^2\partial_{\theta_{j}}(\widetilde{w}_{t,j}\widetilde{w}_{l,j})dx.
		\end{eqnarray*}
		By \cite[Lemma~5.1]{ACR07} and the assumption $\lambda_{n_k}<4\lambda_{n_1}$, for every $j\in \mathbf{n}_\tau$ with $\tau=1,2,\cdots,k$ and $l\not=t$,
		\begin{eqnarray*}
			\int_{\bbr^N}\widetilde{w}_{s,i}^2\widetilde{w}_{t,j}\widetilde{w}_{l,j}dx=o(e^{-\sqrt{\lambda_{n_\tau}}\eta_{**}}).
		\end{eqnarray*}
		where $\eta_{**}<\widetilde{\eta}$ with $\widetilde{\eta}=\min_{y\in\bbr^N}\{|y-\eta_{s,i}|+|y-\eta_{t,j}|+|y-\eta_{l,j}|\}$.
		Thus, by similar arguments as that used for \cite[Lemma~A.2]{PV22},
		\begin{eqnarray}\label{eqnnnew0030}
			\rho_j^{-1}\int_{\bbr^N}\widetilde{w}_{s,i}^2\partial_{\theta_{j}}(\widetilde{w}_{t,j}\widetilde{w}_{l,j})dx=o( e^{-\sqrt{\lambda_{n_\tau}}\eta_{**}})
		\end{eqnarray}
		and
		\begin{eqnarray}\label{eqnnnew0031}
			\int_{\bbr^N}\widetilde{w}_{s,i}^2\partial_{\rho_j}(\widetilde{w}_{t,j}\widetilde{w}_{l,j})dx
			=o(e^{-\sqrt{\lambda_{n_\tau}}\eta_{**}}).
		\end{eqnarray}
		Since $\{\eta_{s,i}\}\subset\bbr^2$, it is well known that $\widetilde{\eta}$ is attained by the Fermat point.  Thus, in the case of $|s-t|\leq|s-l|$ for every $s$, either
		\begin{eqnarray*}
			\widetilde{\eta}=\left\{\aligned&|\eta_{s,i}-\eta_{t,j}|+|\eta_{s,i}-\eta_{l,j}|,\quad\text{$\eta_{s,i}$ is the middle point},\\
			&|\eta_{s,i}-\eta_{t,j}|+|\eta_{t,j}-\eta_{l,j}|,\quad\text{$\eta_{t,j}$ is the middle point}\endaligned\right.
		\end{eqnarray*}
		or $\widetilde{\eta}>\frac12(|\eta_{s,i}-\eta_{t,j}|+|\eta_{s,i}-\eta_{l,j}|+|\eta_{t,j}-\eta_{l,j}|)\gtrsim\rho$.
		For these points $\eta_{t,j}$ and $\eta_{l,j}$ which satisfy $|s-t|\leq|s-l|$ and $\widetilde{\eta}>\frac12(|\eta_{s,i}-\eta_{t,j}|+|\eta_{s,i}-\eta_{l,j}|+|\eta_{t,j}-\eta_{l,j}|)\gtrsim\rho$.  We call them ``good points'' for the sake of simplicity.  Then, by \eqref{eqnnnew0030} and \eqref{eqnnnew0031},
		\begin{eqnarray*}
			\sum_{t,l;\text{good points}}
			\int_{\bbr^N}\widetilde{w}_{s,i}^2\partial_{\theta_{j}}(\widetilde{w}_{t,j}\widetilde{w}_{l,j})dx=o(\sum_{t,l}
			\int_{\bbr^N}\widetilde{w}_{s,i}^2\widetilde{w}_{t,j}\partial_{\theta_{j}}\widetilde{w}_{t,j}dx)
		\end{eqnarray*}
		and
		\begin{eqnarray*}
			\sum_{t,l;\text{good points}}
			\int_{\bbr^N}\widetilde{w}_{s,i}^2\partial_{\rho_{j}}(\widetilde{w}_{t,j}\widetilde{w}_{l,j})dx=o(\sum_{t,l}
			\int_{\bbr^N}\widetilde{w}_{s,i}^2\widetilde{w}_{t,1}\partial_{\rho_{j}}\widetilde{w}_{t,j}dx)
		\end{eqnarray*}
		for all $s$ and $i\not=j$.  For these points $\eta_{t,j}$ and $\eta_{l,j}$ which satisfy $|s-t|\leq|s-l|$ and $\widetilde{\eta}=|\eta_{s,i}-\eta_{t,j}|+|\eta_{s,i}-\eta_{l,j}|$.  We call them ``bad points'' for the sake of simplicity.  For these points, we observe that since $\alpha_j\sim\frac{1}{\vartheta}$, then for $d\geq3$,
		\begin{eqnarray*}
			\widetilde{\eta}\geq\widehat{\eta}+2\min\{|\eta_{s,j+1}-\eta_{s,j}|,|\eta_{s,j-1}-\eta_{s,j}|\}+O(\frac{|l-s|\rho}{\vartheta}).
		\end{eqnarray*}
		Thus, by similar arguments of \eqref{eq3003},
		\begin{eqnarray*}
			\sum_{t,l;\text{bad points}}
			\int_{\bbr^N}\widetilde{w}_{s,i}^2\partial_{\theta_{j}}(\widetilde{w}_{t,j}\widetilde{w}_{l,j})dx=o(\sum_{t,l}
			\int_{\bbr^N}\widetilde{w}_{s,i}^2\widetilde{w}_{t,j}\partial_{\theta_{j}}\widetilde{w}_{t,j}dx)
		\end{eqnarray*}
		and
		\begin{eqnarray*}
			\sum_{t,l;\text{bad points}}
			\int_{\bbr^N}\widetilde{w}_{s,i}^2\partial_{\rho_{j}}(\widetilde{w}_{t,j}\widetilde{w}_{l,j})dx=o(\sum_{t,l}
			\int_{\bbr^N}\widetilde{w}_{s,i}^2\widetilde{w}_{t,1}\partial_{\rho_{j}}\widetilde{w}_{t,j}dx)
		\end{eqnarray*}
		for $d\geq3$.  For $d=2$,
		except the terms
		\begin{eqnarray*}
			\int_{\bbr^N}\widetilde{w}_{t,1}^2\partial_{\theta_{2}}(\widetilde{w}_{t,2}\widetilde{w}_{t-1,2})dx,\quad
			\int_{\bbr^N}\widetilde{w}_{t,1}^2\partial_{\rho_{2}}(\widetilde{w}_{t,2}\widetilde{w}_{t-1,2})dx,
		\end{eqnarray*}
		and
		\begin{eqnarray*}
			\int_{\bbr^N}\widetilde{w}_{t-1,2}^2\partial_{\theta_{1}}(\widetilde{w}_{t,1}\widetilde{w}_{t-1,1})dx,\quad
			\int_{\bbr^N}\widetilde{w}_{t-1,2}^2\partial_{\rho_{1}}(\widetilde{w}_{t,1}\widetilde{w}_{t-1,1})dx,
		\end{eqnarray*}
		the other terms in $\sum_{t,l}
		\int_{\bbr^N}\widetilde{w}_{s,i}^2\partial_{\theta_{j}}(\widetilde{w}_{t,j}\widetilde{w}_{l,j})dx$ and $\sum_{t,l}
		\int_{\bbr^N}\widetilde{w}_{s,i}^2\partial_{\rho_{j}}(\widetilde{w}_{t,j}\widetilde{w}_{l,j})dx$ are $o(\sum_{t,l}
		\int_{\bbr^N}\widetilde{w}_{s,i}^2\widetilde{w}_{t,j}\partial_{\theta_{j}}\widetilde{w}_{t,j}dx)$ and $o(\sum_{t,l}
		\int_{\bbr^N}\widetilde{w}_{s,i}^2\widetilde{w}_{t,1}\partial_{\rho_{j}}\widetilde{w}_{t,j}dx)$, respectively, even if we sum them up in terms of $l$ and $t$ by similar arguments for $d\geq3$.  For the term
		\begin{eqnarray*}
			\int_{\bbr^N}\widetilde{w}_{t,1}^2\widetilde{w}_{t,2}\widetilde{w}_{t-1,2}dx,
		\end{eqnarray*}
		Without loss of generality, we may assume that $|\eta_{t,2}-\eta_{t,1}|\leq|\eta_{t-1,2}-\eta_{t,1}|$ and denote $\widehat{\eta}=|\eta_{t,2}-\eta_{t,1}|=\alpha\widetilde{\eta}_{2}$ with $\alpha\leq\frac12$.  Moreover, by translations and rotations if necessary, we may assume that $\eta_{t,2}=0$ and denote $\eta_{t-1,2}=(\eta_{t-1,2}^0,0,0,\cdots,0)$.  Note that $\eta_{t,2}$, $\eta_{t,1}$ and $\eta_{t-1,2}$ are almost on the same line by $\rho_{j}=\rho+O(1)$ for all $j$.  Thus, $\eta_{t,1}=(\eta_{t,1}^0,o(1),o(1),\cdots,o(1))$.  Now, as that in the proof of \cite[Lemma~3.7]{ACR07}, we rewrite $x=(x_1,x')$.  Then,
		\begin{eqnarray*}
			\int_{\bbr^N}\widetilde{w}_{t,1}^2\widetilde{w}_{t,2}\widetilde{w}_{t-1,2}dx&\sim&\int_{x_1\leq1}\widetilde{w}_{t,1}^2\widetilde{w}_{t,2}\widetilde{w}_{t-1,2}dx_1dx'\\
			&&+\int_{1<x_1\leq\frac{\eta_{t-1,2}^0}{2};|x'|\leq x_1}\widetilde{w}_{t,1}^2\widetilde{w}_{t,2}\widetilde{w}_{t-1,2}dx_1dx'\\
			&&+\int_{\frac{\eta_{t-1,2}^0}{2}<x_1\leq\eta_{t-1,2}^0-1;|x'|\leq x_1}\widetilde{w}_{t,1}^2\widetilde{w}_{t,2}\widetilde{w}_{t-1,2}dx_1dx'\\
			&&+\int_{x_1\geq\eta_{t-1,2}^0-1}\widetilde{w}_{t,1}^2\widetilde{w}_{t,2}\widetilde{w}_{t-1,2}dx_1dx'\\
			&&+\int_{1<x_1\leq\frac{\eta_{t-1,2}^0}{2};|x'|> x_1}\widetilde{w}_{t,1}^2\widetilde{w}_{t,2}\widetilde{w}_{t-1,2}dx_1dx'\\
			&&+\int_{\frac{\eta_{t-1,2}^0}{2}<x_1\leq\eta_{t-1,2}^0-1;|x'|> x_1}\widetilde{w}_{t,1}^2\widetilde{w}_{t,2}\widetilde{w}_{t-1,2}dx_1dx'.
		\end{eqnarray*}
		By \eqref{eqnnnew0008},
		\begin{eqnarray*}
			\int_{x_1\leq1}\widetilde{w}_{t,1}^2\widetilde{w}_{t,2}\widetilde{w}_{t-1,2}dx_1dx'
			&\lesssim&(\eta_{t,1}^0)^{\frac{3(1-N)}{2}}e^{-(2\sqrt{\lambda_1}\eta_{t,1}^0+\sqrt{\lambda_2}\eta_{t-1,2}^0)}\\
			&\sim&\widehat{\eta}^{-\frac{3(1-N)}{2}}e^{-(2\sqrt{\lambda_1}\widehat{\eta}+\sqrt{\lambda_2}\widetilde{\eta}_{2})}.
		\end{eqnarray*}
		Since $|x-\eta_{t,1}^0|=\sqrt{|x_1-\eta_{t,1}^0|^2+|x'|^2}\sim|x_1-\eta_{t,1}^0|+|x'|$, by \cite[(5.6)]{ACR07},
		\begin{eqnarray*}
			&&\frac{\int_{1<x_1\leq\eta_{t,1}^0;|x'|\leq x_1}\widetilde{w}_{t,1}^2\widetilde{w}_{t,2}\widetilde{w}_{t-1,2}dx_1dx'}{(\eta_{t,1}^0)^{1-N}e^{-\sqrt{\lambda_2}\eta_{t-1,2}^0}}\\
			&\sim&\int_{1}^{\eta_{t,1}^0}(|x_1-\eta_{t,1}^0|+|x'|+1)^{1-N}e^{-c|x_1-\eta_{t,1}^0|}\int_{|x'|\leq x_1}e^{-(c|x'|+\frac{c'|x'|^2}{x_1})}dx'.
		\end{eqnarray*}
		Since
		\begin{eqnarray*}
			&&\int_{1}^{\eta_{t,1}^0}(|x_1-\eta_{t,1}^0|+|x'|+1)^{1-N}e^{-c|x_1-\eta_{t,1}^0|}\int_{|x'|\leq x_1}e^{-(c|x'|+\frac{c'|x'|^2}{x_1})}dx'\\
			&\lesssim&
			\int_{1}^{\eta_{t,1}^0}(|x_1-\eta_{t,1}^0|+1)^{1-N}e^{-c|x_1-\eta_{t,1}^0|}\int_{|x'|\leq x_1}e^{-c|x'|}dx'\\
			&\sim&1
		\end{eqnarray*}
		and
		\begin{eqnarray*}
			&&\int_{1}^{\eta_{t,1}^0}(|x_1-\eta_{t,1}^0|+|x'|+1)^{1-N}e^{-c|x_1-\eta_{t,1}^0|}\int_{|x'|\leq x_1}e^{-(c|x'|+\frac{c'|x'|^2}{x_1})}dx'\\
			&\gtrsim&
			\int_{\eta_{t,1}^0-1}^{\eta_{t,1}^0}(|x_1-\eta_{t,1}^0|+|x'|+1)^{1-N}e^{-c|x_1-\eta_{t,1}^0|}\int_{|x'|\leq 1}e^{-c''|x'|}dx'\\
			&\sim&1,
		\end{eqnarray*}
		we have
		\begin{eqnarray*}
			\int_{1<x_1\leq\eta_{t,1}^0;|x'|\leq x_1}\widetilde{w}_{t,1}^2\widetilde{w}_{t,2}\widetilde{w}_{t-1,2}dx_1dx'&\sim&(\eta_{t,1}^0)^{1-N}e^{-\sqrt{\lambda_2}\eta_{t-1,2}^0}\\
			&\sim&\widehat{\eta}^{1-N}e^{-\frac{\sqrt{\lambda_2}}{\alpha}|\eta_{t,2}-\eta_{t,1}|}.
		\end{eqnarray*}
		Note that $\widetilde{w}_{t,1}^2\lesssim(\eta_{t,1}^0)^{1-N}e^{-\sqrt{2\lambda_1}\eta_{t,1}^0}$, by similar arguments as that used for \cite[Lemma~3.7]{ACR07},
		\begin{eqnarray*}
			\int_{1<x_1\leq\frac{\eta_{t-1,2}^0}{2};|x'|> x_1}\widetilde{w}_{t,1}^2\widetilde{w}_{t,2}\widetilde{w}_{t-1,2}dx_1dx'&\lesssim&(\eta_{t,1}^0)^{\frac{3(1-N)}{2}}e^{-(\sqrt{2\lambda_1}\eta_{t,1}^0+\sqrt{\lambda_2}\eta_{t-1,2}^0)}\\
			&\sim&\widehat{\eta}^{-\frac{3(1-N)}{2}}e^{-(\sqrt{2\lambda_1}\widehat{\eta}+\sqrt{\lambda_2}\widetilde{\eta}_{2})}.
		\end{eqnarray*}
		The other terms can be estimated similarly and thus, we have
		\begin{eqnarray*}
			\int_{\bbr^N}\widetilde{w}_{t,1}^2\widetilde{w}_{t,2}\widetilde{w}_{t-1,2}dx\sim\widehat{\eta}^{1-N}e^{-\frac{\sqrt{\lambda_2}}{\alpha}|\eta_{t,2}-\eta_{t,1}|}.
		\end{eqnarray*}
		Similarly, if we assume that $|\eta_{t-1,2}-\eta_{t-1,1}|\leq|\eta_{t-1,2}-\eta_{t,1}|$ and denote $\widehat{\eta}=|\eta_{t-1,2}-\eta_{t-1,1}|=\alpha\widetilde{\eta}_{1}$ with $\alpha\leq\frac12$, we also have
		\begin{eqnarray*}
			\int_{\bbr^N}\widetilde{w}_{t-1,2}^2\widetilde{w}_{t,1}\widetilde{w}_{t-1,1}dx\sim
			|\eta_{t-1,2}-\eta_{t-1,1}|^{1-N}e^{-\frac{\sqrt{\lambda_1}}{\alpha}|\eta_{t-1,2}-\eta_{t-1,1}|}.
		\end{eqnarray*}
		Now, by similar arguments as that used for \cite[Lemma~A.2]{PV22}, similar calculations in \cite[Lemma~2.6]{PV22} and the symmetry of the construction of $\{\eta_{t,j}\}$,
		\begin{eqnarray*}
			\int_{\bbr^N}\widetilde{w}_{t,1}^2\partial_{\theta_{2}}(\widetilde{w}_{t,2}\widetilde{w}_{t-1,2})dx
			\sim\widehat{\eta}^{1-N}e^{-\frac{\sqrt{\lambda_2}}{\alpha}\widehat{\eta}}
			\langle\frac{\xi_{t,2}-\xi_{t,1}}{|\xi_{t,2}-\xi_{t,1}|}, \rho_2\xi_{t,2}^{\perp}\rangle
		\end{eqnarray*}
		and
		\begin{eqnarray*}
			\int_{\bbr^N}\widetilde{w}_{t,1}^2\partial_{\rho_2}(\widetilde{w}_{t,2}\widetilde{w}_{t-1,2})dx
			\sim\widehat{\eta}^{1-N}e^{-\frac{\sqrt{\lambda_2}}{\alpha}\widehat{\eta}}
			\langle\frac{\xi_{t,2}-\xi_{t,1}}{|\xi_{t,2}-\xi_{t,1}|}, \xi_{t,2}\rangle,
		\end{eqnarray*}
		while
		\begin{eqnarray*}
			\int_{\bbr^N}\widetilde{w}_{t-1,2}^2\partial_{\theta_{1}}(\widetilde{w}_{t,1}\widetilde{w}_{t-1,1})dx
			\sim\widehat{\eta}^{1-N}e^{-\frac{\sqrt{\lambda_1}}{\alpha}\widehat{\eta}}
			\langle\frac{\xi_{t-1,2}-\xi_{t-1,1}}{|\xi_{t-1,2}-\xi_{t-1,1}|}, \rho_1\xi_{t-1,1}^{\perp}\rangle
		\end{eqnarray*}
		and
		\begin{eqnarray*}
			\int_{\bbr^N}\widetilde{w}_{t-1,2}^2\partial_{\rho_1}(\widetilde{w}_{t,1}\widetilde{w}_{t-1,1})dx
			\sim\widehat{\eta}^{1-N}e^{-\frac{\sqrt{\lambda_1}}{\alpha}\widehat{\eta}}
			\langle\frac{\xi_{t-1,2}-\xi_{t-1,1}}{|\xi_{t-1,2}-\xi_{t-1,1}|}, \xi_{t-1,1}\rangle.
		\end{eqnarray*}
		It follows that in the case of $d=2$,
		\begin{eqnarray*}
			\int_{\bbr^N}\widetilde{w}_{t,1}^2\partial_{\theta_{2}}(\widetilde{w}_{t,2}\widetilde{w}_{t-1,2})dx,\quad
			\int_{\bbr^N}\widetilde{w}_{t,1}^2\partial_{\rho_{2}}(\widetilde{w}_{t,2}\widetilde{w}_{t-1,2})dx,
		\end{eqnarray*}
		and
		\begin{eqnarray*}
			\int_{\bbr^N}\widetilde{w}_{t-1,2}^2\partial_{\theta_{1}}(\widetilde{w}_{t,1}\widetilde{w}_{t-1,1})dx,\quad
			\int_{\bbr^N}\widetilde{w}_{t-1,2}^2\partial_{\rho_{1}}(\widetilde{w}_{t,1}\widetilde{w}_{t-1,1})dx
		\end{eqnarray*}
		are also h.o.t. for $\lambda_1=\lambda_2$, while
		\begin{eqnarray*}
			\int_{\bbr^N}\widetilde{w}_{t,1}^2\partial_{\theta_{2}}(\widetilde{w}_{t,2}\widetilde{w}_{t-1,2})dx
			\sim\int_{\bbr^N}\widetilde{w}_{t,1}^2\widetilde{w}_{t,2}\partial_{\theta_{2}}(\widetilde{w}_{t,2})dx,\\
			\int_{\bbr^N}\widetilde{w}_{t,1}^2\partial_{\rho_{2}}(\widetilde{w}_{t,2}\widetilde{w}_{t-1,2})dx
			\sim\int_{\bbr^N}\widetilde{w}_{t,1}^2\widetilde{w}_{t,2}\partial_{\rho_{2}}(\widetilde{w}_{t,2})dx,
		\end{eqnarray*}
		and
		\begin{eqnarray*}
			\int_{\bbr^N}\widetilde{w}_{t-1,2}^2\partial_{\theta_{1}}(\widetilde{w}_{t,1}\widetilde{w}_{t-1,1})dx
			\sim\int_{\bbr^N}\widetilde{w}_{t-1,2}^2\widetilde{w}_{t,1}\partial_{\theta_{1}}(\widetilde{w}_{t,1})dx,\\
			\int_{\bbr^N}\widetilde{w}_{t-1,2}^2\partial_{\rho_{1}}(\widetilde{w}_{t,1}\widetilde{w}_{t-1,1})dx
			\sim\int_{\bbr^N}\widetilde{w}_{t-1,2}^2\widetilde{w}_{t,1}\partial_{\rho_{1}}(\widetilde{w}_{t,1})dx
		\end{eqnarray*}
		for $\lambda_1<\lambda_2$.  The conclusion then follows from the above computations.
	\end{proof}

	\section{The reduced problem}
	So far, by \eqref{eqnew9988} and \eqref{eq9003}, we have proved that $\mathbf{U}=\mathbf{W}+\mathbf{Q}_*+\mathbf{v}_{**}$
	solves the following equation:
	\begin{equation}\label{eqnn9997}
		\left\{\aligned&-\Delta U_j+V_j(x)U_j=\mu_jU_j^3+\sum_{i=1,i\not=j}^d\beta_{i,j} U_i^2U_j-\gamma_{\theta_{j}}^*\partial_{\theta_{j}}W_j-\gamma_{\rho_j}^*\partial_{\rho_j}W_j\quad\text{in }\bbr^N,\\
		&\int_{\bbr^N}\partial_{x_l}\widetilde{w}_{t,j}Q_{j,*}dx=\int_{\bbr^N}\partial_{x_l}\widetilde{w}_{t,j}v_{j,**}dx=0, \quad j=1,2,\cdots,d; l=1,2,\cdots,N; t=1,2,\cdots,\vartheta,\endaligned\right.
	\end{equation}
	where $\mathbf{U}=(U_1,U_2,\cdots,U_d)$ with $U_j=W_j+Q_{j,*}+v_{j,**}$, and
	\begin{eqnarray*}
		\gamma_{\theta_{j}}^*=\frac{\|\partial_{\rho_j}W_j\|_{L^2(\bbr^N)}^2\int_{\bbr^N}(E_{j}^{***}+N_j^{**})\partial_{\theta_{j}}W_jdx
			-\int_{\bbr^N}\partial_{\theta_{j}}W_j\partial_{\rho_j}W_jdx
			\int_{\bbr^N}(E_{j}^{***}+N_j^{**})\partial_{\rho_j}W_jdx}{\|\partial_{\theta_{j}}W_j\|_{L^2(\bbr^N)}^2\|\partial_{\rho_j}W_j\|_{L^2(\bbr^N)}^2-
			(\int_{\bbr^N}\partial_{\theta_{j}}W_j\partial_{\rho_j}W_jdx)^2}
	\end{eqnarray*}
	and
	\begin{eqnarray*}
		\gamma_{\rho_j}^*=\frac{\|\partial_{\theta_{j}}W_j\|_{L^2(\bbr^N)}^2\int_{\bbr^N}(E_{j}^{***}+N_j^{**})\partial_{\rho_j}W_jdx
			-\int_{\bbr^N}\partial_{\theta_{j}}W_j\partial_{\rho_j}W_jdx
			\int_{\bbr^N}(E_{j}^{***}+N_j^{**})\partial_{\theta_{j}}W_jdx}{\|\partial_{\theta_{j}}W_j\|_{L^2(\bbr^N)}^2\|\partial_{\rho_j}W_j\|_{L^2(\bbr^N)}^2-
			(\int_{\bbr^N}\partial_{\theta_{j}}W_j\partial_{\rho_j}W_jdx)^2}
	\end{eqnarray*}
	with $E_{j}^{***}$ and $N_j^{**}$ being given by \eqref{eqnn9999} and \eqref{eqnn9998}, respectively.
	\begin{lemma}\label{lem0005}
		Suppose the assumptions~$(V_1)$--$(V_2)$ hold and $\beta_{i,j}$ is not an eigenvalue of $-\Delta+\lambda_j$ in $L^2(\bbr^N; w_i^2)$, that is,
		\begin{eqnarray*}
			-\Delta v+\lambda_j v=\beta_{i,j}w_i^2v \quad\text{in }\bbr^N
		\end{eqnarray*}
		has no solutions in $H^2(\bbr^N)$, then
		for $\vartheta$ and $\rho\overline{\alpha}=\rho\min\limits_{1\le j\le d}\alpha_j$ sufficiently large, $\gamma_{\rho_l}^*=0$ and $\gamma_{\alpha_{l-1}}^*=0$ for all $l=1,2,\cdots,d$ if and only if $\nabla\mathcal{J}(\overrightarrow{\rho},\overrightarrow{\alpha})=\bf{0}$, where
		\begin{eqnarray*}
			\mathcal{J}(\overrightarrow{\rho},\overrightarrow{\alpha})=\mathcal{E}(\mathbf{U}),
		\end{eqnarray*}
		with $\overrightarrow{\rho}=(\rho_1,\rho_2,\cdots,\rho_d)$, $\overrightarrow{\alpha}=(\alpha_1,\alpha_2,\cdots,\alpha_{d})$ and
		\begin{eqnarray*}
			\mathcal{E}(\mathbf{u})=\sum_{j=1}^d\frac{1}{2}\int_{\bbr^N}|\nabla u_j|^2+V_j(x)u_j^2dx-\frac{\mu_j}{4}\int_{\bbr^N}u_j^4dx-\sum_{i=1;i\not=j}^{d}\frac{\beta_{i,j}}{4}\int_{\bbr^N}u_i^2u_j^2dx
		\end{eqnarray*}
		being the energy functional of \eqref{eq0001}.
	\end{lemma}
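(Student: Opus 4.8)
The plan is to differentiate the reduced functional $\mathcal{J}(\overrightarrow{\rho},\overrightarrow{\alpha})=\mathcal{E}(\mathbf{U})$ in each parameter and to use crucially that, by \eqref{eqnn9997}, $\mathbf{U}=\mathbf{W}+\mathbf{Q}_*+\mathbf{v}_{**}$ solves \eqref{eq0001} exactly modulo the span of $\{\partial_{\theta_{j}}W_j,\partial_{\rho_j}W_j\}$. Multiplying the $j$-th equation in \eqref{eqnn9997} by a test function $\phi_j$ and summing over $j$ gives, for every $\bm{\phi}=(\phi_1,\dots,\phi_d)$,
\begin{eqnarray*}
D\mathcal{E}(\mathbf{U})[\bm{\phi}]=\sum_{j=1}^{d}\int_{\bbr^N}\big(\gamma_{\theta_{j}}^*\partial_{\theta_{j}}W_j+\gamma_{\rho_j}^*\partial_{\rho_j}W_j\big)\phi_j\,dx .
\end{eqnarray*}
Hence, by the chain rule, $\partial_{\rho_l}\mathcal{J}=D\mathcal{E}(\mathbf{U})[\partial_{\rho_l}\mathbf{U}]$ and $\partial_{\alpha_{l-1}}\mathcal{J}=D\mathcal{E}(\mathbf{U})[\partial_{\alpha_{l-1}}\mathbf{U}]$, with $\partial_{\rho_l}\mathbf{U}$ having $l$-th component $\partial_{\rho_l}W_l+\partial_{\rho_l}(Q_{l,*}+v_{l,**})$ and $i$-th component ($i\ne l$) equal to $\partial_{\rho_l}(Q_{i,*}+v_{i,**})$, and similarly for $\partial_{\alpha_{l-1}}\mathbf{U}$ using $\partial_{\alpha_{l-1}}\theta_j=\chi_{\{j\ge l\}}$. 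Note that since $\{\theta_j\}$ and $\{\alpha_j\}$ are related by an invertible triangular change of variables, the vanishing of all $\gamma_{\rho_l}^*,\gamma_{\alpha_{l-1}}^*$ is equivalent to the vanishing of all $\gamma_{\rho_j}^*,\gamma_{\theta_j}^*$.

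Next I would dispose of the contributions of the corrections. Differentiating the orthogonality relations in \eqref{eqnn9997} (equivalently $\int_{\bbr^N}\partial_{\rho_j}W_j(Q_{j,*}+v_{j,**})\,dx=\int_{\bbr^N}\partial_{\theta_{j}}W_j(Q_{j,*}+v_{j,**})\,dx=0$, which hold as in the proof of Lemma~\ref{lem0002}) with respect to $\rho_l$ and $\alpha_{l-1}$ yields
\begin{eqnarray*}
\int_{\bbr^N}\partial_{\rho_j}W_j\,\partial_{\rho_l}(Q_{j,*}+v_{j,**})\,dx=-\int_{\bbr^N}\partial_{\rho_l}\partial_{\rho_j}W_j\,(Q_{j,*}+v_{j,**})\,dx ,
\end{eqnarray*}
and the three analogous identities. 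By \eqref{eqn2040}, \eqref{eqn2042}, the pointwise bounds on the second derivatives of $W_j$, and the norm definitions, each such right-hand side is a genuinely higher–order term compared with the diagonal quantities $\|\partial_{\rho_l}W_l\|_{L^2}^2\sim\vartheta$ and $\|\partial_{\theta_{l}}W_l\|_{L^2}^2\sim\rho_l^2\vartheta$ from \eqref{eqnn0005}. Substituting, I obtain a linear system
\begin{eqnarray*}
\nabla\mathcal{J}(\overrightarrow{\rho},\overrightarrow{\alpha})=\mathbb{A}\,\big(\gamma_{\theta_{1}}^*,\dots,\gamma_{\theta_{d}}^*,\gamma_{\rho_1}^*,\dots,\gamma_{\rho_d}^*\big)^{T},
\end{eqnarray*}
where $\mathbb{A}$ is, to leading order, block diagonal in $j$ with $2\times2$ blocks equal to the Gram matrices $\left(\begin{smallmatrix}\|\partial_{\theta_{j}}W_j\|_{L^2}^2 & \int\partial_{\theta_{j}}W_j\partial_{\rho_j}W_j\\ \int\partial_{\theta_{j}}W_j\partial_{\rho_j}W_j & \|\partial_{\rho_j}W_j\|_{L^2}^2\end{smallmatrix}\right)$, whose determinant is $\sim\rho_j^2\vartheta^2$ by \eqref{eqnn0005} and \eqref{eqnn9996} (the cross term being exponentially small). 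Thus $\mathbb{A}$ is invertible for $\vartheta$ and $\rho\overline{\alpha}$ large, and consequently $\nabla\mathcal{J}(\overrightarrow{\rho},\overrightarrow{\alpha})=\mathbf{0}$ if and only if all $\gamma_{\theta_{j}}^*=\gamma_{\rho_j}^*=0$, which is the claim.

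The main obstacle is the legitimacy and quantitative control of the differentiations $\partial_{\rho_l}(Q_{j,*}+v_{j,**})$ and $\partial_{\alpha_{l-1}}(Q_{j,*}+v_{j,**})$: one must show that the solution maps produced in Lemma~\ref{lem0002} and Lemma~\ref{lem0003} depend in a $C^1$ fashion on the parameters $(\overrightarrow{\rho},\overrightarrow{\alpha})$ and that the parameter–derivatives remain bounded in the weighted norms. This is done by differentiating the defining equations \eqref{eqnew9988} and \eqref{eq9003} in the parameters, observing that the differentiated right-hand sides lie in $\mathbb{Y}^{\perp}$ with the requisite size (the derivatives of $\mathbf{E}_3^\perp$, $\mathbf{E}_{**}^\perp$, $\mathbf{N}_{**}^\perp$ and of the projection coefficients being controlled as in Lemmas~\ref{lem0002}–\ref{lem0003}), and then invoking the uniform invertibility of $\mathcal{L}$ from Proposition~\ref{prop0001} together with the contraction estimate — in effect an implicit function theorem argument in $\mathbb{X}^{\perp}$. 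Once this $C^1$–dependence and the higher–order nature of the off–diagonal entries of $\mathbb{A}$ are in hand, the remaining computations are routine.
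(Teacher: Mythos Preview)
Your proposal is correct and follows essentially the same route as the paper: differentiate $\mathcal{J}$ via \eqref{eqnn9997}, transfer the derivatives off $Q_{j,*}+v_{j,**}$ using the orthogonality relations, and invoke \eqref{eqnn0005}, \eqref{eqnn9996} to see that the resulting $2d\times 2d$ system is diagonally dominant and hence invertible. The paper in fact argues slightly more tersely (it simply asserts the $C^1$ dependence of $\mathbf{Q}_*,\mathbf{v}_{**}$ on the parameters as ``standard'' from the fixed-point construction, and works directly in the $\theta_l$ variables rather than passing through the triangular change $\alpha\leftrightarrow\theta$), so your more explicit discussion of the implicit-function-theorem step and of the $\alpha/\theta$ equivalence is a welcome clarification rather than a departure.
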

	\begin{proof}
		Since $\mathbf{Q}_*$ and $\mathbf{v}_{**}$ are obtained by applying Proposition~\ref{prop0001} (the linear theory) through the fix-point argument, it is standard to show that $\mathbf{Q}_*$ and $\mathbf{v}_{**}$ are Lipschitz in terms of the parameters $\overrightarrow{\rho}$ and $\overrightarrow{\alpha}$.  Thus, $\mathcal{J}(\overrightarrow{\rho},\overrightarrow{\alpha})$ is of class $C^1$.  Moreover, by \eqref{eqnn9997},
		\begin{eqnarray*}
			\partial_{\rho_l}\mathcal{J}(\overrightarrow{\rho},\overrightarrow{\alpha})&=&\sum_{j=1}^{d}(\gamma_{\theta_{j}}^*\int_{\bbr^N}\partial_{\theta_{j}}W_j\partial_{\rho_l}U_jdx+
			\gamma_{\rho_j}^*\int_{\bbr^N}\partial_{\rho_{j}}W_j\partial_{\rho_l}U_jdx)\\
			&=&\sum_{j=1}^{d}(\gamma_{\theta_{j}}^*\int_{\bbr^N}\partial_{\theta_{j}}W_j\partial_{\rho_l}W_jdx+
			\gamma_{\rho_j}^*\int_{\bbr^N}\partial_{\rho_{j}}W_j\partial_{\rho_l}W_jdx)\\
			&&+\sum_{j=1}^{d}(\gamma_{\theta_{j}}^*\int_{\bbr^N}\partial_{\theta_{j}}W_j\partial_{\rho_l}(Q_{j,*}+v_{j,**})dx+
			\gamma_{\rho_j}^*\int_{\bbr^N}\partial_{\rho_{j}}W_j\partial_{\rho_l}(Q_{j,*}+v_{j,**})dx)
		\end{eqnarray*}
		and
		\begin{eqnarray*}
			\partial_{\theta_{l}}\mathcal{J}(\overrightarrow{\rho},\overrightarrow{\alpha})&=&\sum_{j=1}^{d}(\gamma_{\theta_{j}}^*\int_{\bbr^N}\partial_{\theta_{j}}W_j
			\partial_{\theta_{l}}U_jdx+
			\gamma_{\rho_j}^*\int_{\bbr^N}\partial_{\rho_{j}}W_j\partial_{\theta_{l}}U_jdx)\\
			&=&\sum_{j=1}^{d}(\gamma_{\theta_{j}}^*\int_{\bbr^N}\partial_{\theta_{j}}W_j\partial_{\theta_{l}}W_jdx+
			\gamma_{\rho_j}^*\int_{\bbr^N}\partial_{\rho_{j}}W_j\partial_{\theta_{l}}W_jdx)\\
			&&+\sum_{j=1}^{d}(\gamma_{\theta_{j}}^*\int_{\bbr^N}\partial_{\theta_{j}}W_j\partial_{\theta_{l}}(Q_{j,*}+v_{j,**})dx+
			\gamma_{\rho_j}^*\int_{\bbr^N}\partial_{\rho_{j}}W_j\partial_{\theta_{l}}(Q_{j,*}+v_{j,**})dx)
		\end{eqnarray*}
		for all $l=1,2,\cdots,d$.  By \eqref{eqnn0005} and \eqref{eqnn9996},
		\begin{eqnarray*}
			\sum_{j=1}^{d}(\gamma_{\theta_{j}}^*\int_{\bbr^N}\partial_{\theta_{j}}W_j\partial_{\rho_l}W_jdx+
			\gamma_{\rho_j}^*\int_{\bbr^N}\partial_{\rho_{j}}W_j\partial_{\rho_l}W_jdx)\sim\vartheta\gamma_{\rho_l}^*+o(\sum_{j\not=l}\vartheta\gamma_{\rho_j}^*)
			+(o(\sum_{j=1}^d\rho_j\vartheta\gamma_{\theta_{j}}^*))
		\end{eqnarray*}
		and
		\begin{eqnarray*}
			\sum_{j=1}^{d}(\gamma_{\theta_{j}}^*\int_{\bbr^N}\partial_{\theta_{j}}W_j\partial_{\theta_{l}}W_jdx+
			\gamma_{\rho_j}^*\int_{\bbr^N}\partial_{\rho_{j}}W_j\partial_{\theta_{l}}W_jdx)\sim\rho_l^2\vartheta(\gamma_{\theta_{l}}^*+o(\sum_{j=1;j\not=l}^{d}\gamma_{\theta_{j}}^*))
			+o(\vartheta\sum_{j=1;j\not=l}^d\rho_j\gamma_{\rho_j}^*)
		\end{eqnarray*}
		for all $l=1,2,\cdots,d$.  Moreover, by the orthogonal conditions of $Q_{j,*}$ and $v_{j,**}$, and \eqref{eqn2040} and \eqref{eqn2042},
		\begin{eqnarray*}
			&\int_{\bbr^N}\partial_{\theta_{j}}W_j\partial_{\theta_{l}}(Q_{j,*}+v_{j,**})dx
			=-\int_{\bbr^N}\partial_{\theta_{l}}\partial_{\theta_{j}}W_j(Q_{j,*}+v_{j,**})dx=o(\vartheta\rho^2),\\
			&\int_{\bbr^N}\partial_{\rho_{j}}W_j\partial_{\theta_{l}}(Q_{j,*}+v_{j,**})dx
			=-\int_{\bbr^N}\partial_{\theta_{l}}\partial_{\rho_{j}}W_j(Q_{j,*}+v_{j,**})dx=o(\vartheta\rho),\\
			&\int_{\bbr^N}\partial_{\theta_{j}}W_j\partial_{\rho_l}(Q_{j,*}+v_{j,**})dx
			=-\int_{\bbr^N}\partial_{\rho_l}\partial_{\theta_{j}}W_j(Q_{j,*}+v_{j,**})dx=o(\vartheta\rho),\\
			&\int_{\bbr^N}\partial_{\rho_{j}}W_j\partial_{\rho_l}(Q_{j,*}+v_{j,**})dx
			=-\int_{\bbr^N}\partial_{\rho_l}\partial_{\rho_{j}}W_j(Q_{j,*}+v_{j,**})dx=o(\vartheta)
		\end{eqnarray*}
		for all $j,l=1,2,\cdots,d$.  Thus, by $\rho_j=\rho+o(1)$ for all $j=1,2,\cdots,d$,
		\begin{eqnarray*}
			\rho_l\partial_{\rho_l}\mathcal{J}(\overrightarrow{\rho},\overrightarrow{\alpha})
			\sim\vartheta\rho_l\gamma_{\rho_l}^*+o(\sum_{j=1;j\not=l}^{d}\vartheta\rho_j\gamma_{\rho_j}^*)
			+(o(\sum_{j=1}^d\rho_j^2\vartheta\gamma_{\theta_{j}}^*))
		\end{eqnarray*}
		and
		\begin{eqnarray*}
			\partial_{\theta_{l}}\mathcal{J}(\overrightarrow{\rho},\overrightarrow{\alpha})
			\sim\vartheta\rho_l^2\gamma_{\theta_{l}}^*+o(\sum_{j=1;j\not=l}^{d}\vartheta\rho_j^2\gamma_{\theta_{j}}^*)
			+o(\vartheta\sum_{j=1}^d\rho_j\gamma_{\rho_j}^*)
		\end{eqnarray*}
		for all $l=1,2,\cdots,d$, which implies that $\gamma_{\rho_l}^*=0$ and $\gamma_{\theta_{l}}^*=0$ for all $l=1,2,\cdots,d$ if and only if $\nabla\mathcal{J}(\overrightarrow{\rho},\overrightarrow{\alpha})=\bf{0}$.
	\end{proof}
	
	Let $K_j(x)$ be the Green function of the operator $-\Delta+\lambda_j$, that is, $K_j$ solves the following equation:
	\begin{eqnarray*}
		\left\{\aligned&-\Delta K_j+\lambda K_j=\delta_0\quad\text{in }\bbr^N,\\
		&K_j(x)\to0\quad\text{as }|x|\to+\infty,\endaligned\right.
	\end{eqnarray*}
	where $\delta_0$ is the Dirac mass supported at $x=0$.  Then it is well known that $K_j(x)=\Gamma(x)-H_j(x)$, where $\Gamma(x)$ is the fundamental solution of $-\Delta$ and $H_j(x)$ is the regular part which is of class $C^1$ since $N=2,3$.  Moreover,
	\begin{eqnarray*}\label{eqn2052}
		K_j(x)\sim |x|^{-\frac{N-1}{2}}e^{-\sqrt{\lambda_j}|x|}\quad\text{as }|x|\to+\infty.
	\end{eqnarray*}
	By the representation formula,
	\begin{eqnarray*}
		Q_{j,*}(x)=\int_{\bbr^N}K_j(x-y)\widetilde{Q}_{j,*}(y)dy
	\end{eqnarray*}
	where
	\begin{eqnarray*}
		\widetilde{Q}_{j,*}(y)&=&(\lambda_j-V_j(y))Q_{j,*}(y)+3\mu_jW_j(y)^2Q_{j,*}(y)+\sum_{i=1;i\not=j}^{d}\beta_{i,j}W_i(y)^2Q_{j,*}(y)\\
		&&+2\sum_{i=1;i\not=j}^{d}\beta_{i,j}W_i(y)W_j(y)Q_{i,*}(y)+E_{j,3}(y)
		-\gamma_{\theta_{j}}\partial_{\theta_{j}}W_j(y)-\gamma_{\rho_j}\partial_{\rho_j}W_j(y).
	\end{eqnarray*}
	\begin{lemma}\label{lem0006}
		Suppose the assumptions~$(V_1)$--$(V_2)$ hold and $\beta_{i,j}$ is not an eigenvalue of $-\Delta+\lambda_j$ in $L^2(\bbr^N; w_i^2)$, that is,
		\begin{eqnarray*}
			-\Delta v+\lambda_j v=\beta_{i,j}w_i^2v \quad\text{in }\bbr^N
		\end{eqnarray*}
		has no solutions in $H^2(\bbr^N)$.  If $\lambda_{n_k}<4\lambda_{n_1}$ under the condition~\eqref{eq0002}, $\alpha_{n_{\tau+1}}<\alpha_{n_{\tau}}$ for all $\tau=1,2,\cdots,k-2$ and $\alpha_{n_k}=(1+o(1))\alpha_{n_1}$ with
		\begin{eqnarray*}
			\max\{\max\{\sqrt{\lambda_{n_\tau}}\alpha_{n_{\tau}}\}, \sqrt{\lambda_{d}}\alpha_{d-1}\}\leq2\min\{\min\{\sqrt{\lambda_{n_\tau'}}\alpha_{n_{\tau'}}\},\sqrt{\lambda_{d}}\alpha_{d-1}\}
		\end{eqnarray*}
		in the case of $k>1$
		and $\alpha_{j}=(1+o(1))\alpha_{n_{\tau}}$ for all $j=n_{\tau-1}+1,n_{\tau-1}+2,\cdots,n_{\tau}$,
		then
		\begin{eqnarray*}
			|Q_{j,*}|\lesssim
			(\rho_j\alpha_j)^{\frac{1-N}{2}}e^{-\sqrt{\lambda_j}\rho_j\alpha_j}\sum_{\tau,i,m}(1+r_{m,i})^{\frac{1-N}{2}}e^{-\textcolor{red}{\varepsilon}\sqrt{\lambda_{n_\tau}} r_{m,i}}
		\end{eqnarray*}
		in $\bbr^N$ with $\|Q_{j,*}\|_{L^\infty(\bbr^N)}\sim(\rho_j\alpha_j)^{\frac{1-N}{2}}e^{-\sqrt{\lambda_j}\rho_j\alpha_j}$ for $j=1,2,\cdots,d-1$, and
		\begin{eqnarray*}
			|Q_{d,*}|\lesssim
			(\rho_d\alpha_{d-1})^{\frac{1-N}{2}}e^{-\sqrt{\lambda_d}\rho_d\alpha_{d-1}}\sum_{\tau,i,m}(1+r_{m,i})^{\frac{1-N}{2}}e^{-\textcolor{red}{\varepsilon}\sqrt{\lambda_{n_\tau}} r_{m,i}}
		\end{eqnarray*}
		in $\bbr^N$ with $\|Q_{d,*}\|_{L^\infty(\bbr^N)}\sim(\rho_d\alpha_{d-1})^{\frac{1-N}{2}}e^{-\sqrt{\lambda_d}\rho_d\alpha_{d-1}}$.
	\end{lemma}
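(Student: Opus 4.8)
The plan is to start from the representation formula $Q_{j,*}(x)=\int_{\bbr^N}K_j(x-y)\widetilde{Q}_{j,*}(y)\,dy$ and to bootstrap, taking the crude bound $\|\mathbf{Q}_*\|_{\sharp}\lesssim\widehat{\eta}^{\frac{1-N}{2}}e^{-\sqrt{\lambda_{n_1}}\widehat{\eta}}$ from Lemma~\ref{lem0002} as the seed. I would split $\widetilde{Q}_{j,*}$ into the genuine source $E_{j,3}-\gamma_{\theta_{j}}\partial_{\theta_{j}}W_j-\gamma_{\rho_j}\partial_{\rho_j}W_j$ and the feedback terms $(\lambda_j-V_j)Q_{j,*}+3\mu_jW_j^2Q_{j,*}+\sum_{i\not=j}\beta_{i,j}W_i^2Q_{j,*}+2\sum_{i\not=j}\beta_{i,j}W_iW_jQ_{i,*}$. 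The feedback terms are perturbative: $(\lambda_j-V_j)$ gains a factor $O(\rho^{-\nu_j})$ by $(V_2)$; $W_j^2Q_{j,*}$ and $W_i^2Q_{j,*}$ are supported in $O(1)$-neighbourhoods of peaks, where the bump profile in the target bound is already small, producing a gain; and $W_iW_jQ_{i,*}$ is supported near the peaks of component $i$, where $W_j$ has size $\lesssim(\rho_j\alpha_j)^{\frac{1-N}{2}}e^{-\sqrt{\lambda_j}\rho_j\alpha_j}$ by the angle hypotheses, so it contributes precisely the right amplitude and the estimates for $Q_{1,*},\dots,Q_{d,*}$ close into a finite coupled system of inequalities. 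By \eqref{eqnn0008}--\eqref{eqnn1008} the two Lagrange-multiplier terms are dominated by $\|E_{j,3}\|_{\natural,j}$ and can be carried along with $E_{j,3}$.

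The heart of the matter is the convolution $\int_{\bbr^N}K_j(x-y)E_{j,3}(y)\,dy$, with $E_{j,3}=\sum_{i\not=j}\beta_{i,j}W_i^2W_j$. Expanding $W_i^2W_j=\sum_{s,t}\widetilde{w}_{s,i}^2\widetilde{w}_{t,j}+(\text{cross terms in }s,t)$ and discarding the cross terms as in Lemma~\ref{lem0004}, one reduces to single products $\widetilde{w}_{s,i}^2\widetilde{w}_{t,j}$, whose mass satisfies $\int_{\bbr^N}\widetilde{w}_{s,i}^2\widetilde{w}_{t,j}\,dy\lesssim|\eta_{s,i}-\eta_{t,j}|^{\gamma}e^{-\sqrt{\lambda_j}|\eta_{s,i}-\eta_{t,j}|}$ for an explicit power $\gamma$; here the slower exponential rate prevails, and it is exactly the pinching condition $\lambda_{n_k}<4\lambda_{n_1}$, equivalently $2\sqrt{\lambda_{n_1}}>\sqrt{\lambda_{n_k}}$, that forces the rate to be $\sqrt{\lambda_j}$ (the decay of $\widetilde{w}_{t,j}$) rather than $2\sqrt{\lambda_{n_{\tau'}}}$ (the decay of $\widetilde{w}_{s,i}^2$). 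Convolving with $K_j$ (rate $\sqrt{\lambda_j}$), using the $r_{m,i}$-sum estimates in the style of \eqref{eq3003} together with the arguments of \cite[Lemmas~3.7 and 5.1]{ACR07}, each product contributes a decaying bump $\lesssim(\text{its amplitude})\,(1+r_{m,i})^{\frac{1-N}{2}}e^{-\ve\sqrt{\lambda_{n_\tau}}r_{m,i}}$ at the relevant peaks. Summing over $s,t,i$ and invoking the ordering hypotheses $\alpha_{n_{\tau+1}}<\alpha_{n_\tau}$, $\alpha_{n_k}=(1+o(1))\alpha_{n_1}$, $\alpha_j=(1+o(1))\alpha_{n_\tau}$ to identify the dominant separation as $\rho_j\alpha_j$ when $j\leq d-1$ and as $\rho_d\alpha_{d-1}$ when $j=d$, one obtains the claimed upper bound. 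The matching lower bound comes from examining $Q_{j,*}$ near a peak $\eta_{s,i}$ of the closest different component $i$: there $E_{j,3}\approx\beta_{i,j}W_j(\eta_{s,i})\,w_i(\cdot-\eta_{s,i})^2$ with $W_j(\eta_{s,i})\sim(\rho_j\alpha_j)^{\frac{1-N}{2}}e^{-\sqrt{\lambda_j}\rho_j\alpha_j}$, and since $\beta_{i,j}$ is not an eigenvalue of $-\Delta+\lambda_j$ in $L^2(\bbr^N;w_i^2)$ the localized equation $(-\Delta+\lambda_j-\beta_{i,j}w_i^2)v\approx\beta_{i,j}W_j(\eta_{s,i})w_i^2$ has a nontrivial solution, forcing $|Q_{j,*}(\eta_{s,i})|\gtrsim(\rho_j\alpha_j)^{\frac{1-N}{2}}e^{-\sqrt{\lambda_j}\rho_j\alpha_j}$.

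The main obstacle I expect is the bookkeeping required to close the coupled system for $(Q_{1,*},\dots,Q_{d,*})$ while simultaneously identifying, for each $j$, which of the $O(d)$ peak-to-peak interactions is dominant. This forces one to combine the pinching condition with the full set of angle hypotheses: the within-block comparabilities $\alpha_j=(1+o(1))\alpha_{n_\tau}$ make the ``near'' interactions inside a block all of comparable weighted size; the between-block monotonicity $\alpha_{n_{\tau+1}}<\alpha_{n_\tau}$ together with $\alpha_{n_k}=(1+o(1))\alpha_{n_1}$ isolates the boundary interactions at $j=n_\tau$ and at $j=d$; and the inequality $(2-\varsigma)\sqrt{\lambda_{n_1}}\geq\sqrt{\lambda_{n_\tau}}$ from \eqref{eq3017} ensures that the cross-block terms decay strictly faster and are absorbed into the higher-order remainder. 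The case $j=d$ carries an extra wrinkle: the wrap-around neighbour $\eta_{t,1}$ has $\lambda_1=\lambda_{n_1}\leq\lambda_d$, so one must check that even there it is the factor $\widetilde{w}_{t,d}$ (rate $\sqrt{\lambda_d}$) and not $\widetilde{w}_{s,1}^2$ (rate $2\sqrt{\lambda_{n_1}}$) that governs the decay --- which is again precisely the pinching condition. Once these comparisons are organized, the remaining analytic work is a routine iteration of the convolution bounds already established.
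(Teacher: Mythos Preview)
Your plan has the right ingredients for the lower bound and for identifying the dominant interactions, but the upper-bound bootstrap as you describe it does not close. The claim that ``$W_j^2Q_{j,*}$ and $W_i^2Q_{j,*}$ are supported in $O(1)$-neighbourhoods of peaks, where the bump profile in the target bound is already small'' is false: near any peak $\eta_{s,l}$ the profile $\sum_{\tau,i,m}(1+r_{m,i})^{\frac{1-N}{2}}e^{-\varepsilon\sqrt{\lambda_{n_\tau}}r_{m,i}}$ is $O(1)$, not small. Consequently, near a peak of component $j$ the term $3\mu_jW_j^2Q_{j,*}$ is of the \emph{same} order as $Q_{j,*}$, and convolving with $K_j$ (whose $L^1$-mass is $O(1)$) returns that order again. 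The same holds for $\beta_{i,j}W_i^2Q_{j,*}$ near a peak of component $i$. So the representation formula for $-\Delta+\lambda_j$ alone gives you back $|Q_{j,*}|\lesssim C|Q_{j,*}|+(\text{sources})$ near peaks with no small factor in front of $C$, and the iteration stalls. These are exactly the terms that must be kept on the \emph{left} side of the equation, not moved to the right as sources.

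The paper sidesteps this by not using the Green function for the upper bound at all. It first sharpens the $\natural$-bound on $E_{j,3}$ to the $j$-specific amplitude $(\rho_j\alpha_j)^{\frac{1-N}{2}}e^{-\sqrt{\lambda_j}\rho_j\alpha_j}$ (this is where the angle hypotheses and the pinching condition enter, via estimates like \eqref{eq3021}) and then re-runs the maximum principle and blow-up argument of Proposition~\ref{prop0001} with this sharper right-hand side; that argument works with the \emph{full} operator $\mathcal{L}_j$, so the $W_j^2$ and $W_i^2$ potentials are absorbed into the limiting linearized problems and the nondegeneracy/non-eigenvalue assumptions do the work. The representation formula is used only afterwards, for the leading behaviour: one rescales $Q_{j,*}(\cdot+\eta_{t,l})$ by the amplitude and passes to the limit, obtaining $0$ for $l\neq j\pm1$ and the unique solution $\phi_{j,t,j\pm1}$ of $(-\Delta+\lambda_j-\beta_{j\pm1,j}w_{j\pm1}^2)\phi=\beta_{j\pm1,j}w_{j\pm1}^2$ for $l=j\pm1$, which gives the lower bound. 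Your lower-bound idea is essentially this rescaling argument and is fine; it is only the upper bound that needs to be redone via the linear theory rather than a Green-function bootstrap.
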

	\begin{proof}
		By \eqref{eq3021} and \eqref{eq0010}, $\alpha_{n_{\tau}+1}<\alpha_{n_{\tau}}$ for all $\tau=1,2,\cdots,k-2$, $\alpha_{n_k}=(1+o(1))\alpha_{n_1}$ and $\alpha_{j}=(1+o(1))\alpha_{n_{\tau}}$ for all $j=n_{\tau-1}+1,n_{\tau-1}+2,\cdots,n_{\tau}$. Thus by similar estimates of \eqref{eq3003},
		\begin{eqnarray*}
			|E_{j,3}|\lesssim
			(\rho_j\alpha_j)^{\frac{1-N}{2}}e^{-\sqrt{\lambda_j}\rho_j\alpha_j}\sum_{\tau,i,m}(1+r_{m,i})^{\frac{1-N}{2}}e^{-\varepsilon\sqrt{\lambda_{n_\tau}} r_{m,i}}
		\end{eqnarray*}
		and
		\begin{eqnarray}\label{eqnnnew0010}
			\|E_{j,3}\|_{L^\infty(\bbr^N)}\sim
			(\rho_j\alpha_j)^{\frac{1-N}{2}}e^{-\sqrt{\lambda_j}\alpha_j\rho_j}
		\end{eqnarray}
		for all $j=1,2,\cdots,d-1$, and
		\begin{eqnarray*}
			|E_{d,3}|\lesssim
			(\rho_d\alpha_{d-1})^{\frac{1-N}{2}}e^{-\sqrt{\lambda_d}\rho_d\alpha_{d-1}}\sum_{i,m}(1+r_{m,i})^{\frac{1-N}{2}}e^{-\varepsilon\sqrt{\lambda_i} r_{m,i}}
		\end{eqnarray*}
		and
		\begin{eqnarray}\label{eqnnnew0011}
			\|E_{d,3}\|_{L^\infty(\bbr^N)}\sim
			(\rho_d\alpha_{d-1})^{\frac{1-N}{2}}e^{-\sqrt{\lambda_d}\rho_d\alpha_{d-1}}.
		\end{eqnarray}
		Note that by \eqref{eq0010} and similar estimates of \eqref{eq3003},
		\begin{eqnarray*}
			W_iW_j\lesssim\left\{\aligned&(\rho_i\alpha_i)^{\frac{1-N}{2}}e^{-\sqrt{\lambda_i}\rho_i\alpha_i}\quad \text{for }i<j,\\
			&(\rho_j\alpha_j)^{\frac{1-N}{2}}e^{-\sqrt{\lambda_j}\rho_j\alpha_j}\quad\text{for }i>j,
			\endaligned\right.
		\end{eqnarray*}
		thus,
		by $\lambda_{n_k}<4\lambda_{n_1}$, $\alpha_{n_{\tau+1}}<\alpha_{n_{\tau}}$ for all $\tau=1,2,\cdots,k-2$ and $\alpha_{n_k}=(1+o(1))\alpha_{n_1}$ with
		\begin{eqnarray*}
			\max\{\max\{\sqrt{\lambda_{n_\tau}}\alpha_{n_{\tau}}\}, \sqrt{\lambda_{d}}\alpha_{d-1}\}\leq2\min\{\min\{\sqrt{\lambda_{n_\tau'}}\alpha_{n_{\tau'}}\},\sqrt{\lambda_{d}}\alpha_{d-1}\}
		\end{eqnarray*}
		in the case of $k>1$, we can apply the maximum principle and blow-up arguments similarly as that in the proof of Proposition~\ref{prop0001}, we have
		\begin{eqnarray}\label{eqnn9990}
			|Q_{j,*}|\lesssim(\rho_j\alpha_j)^{\frac{1-N}{2}}e^{-\sqrt{\lambda_j}\alpha_j}\sum_{\tau,i,m}(1+r_{m,i})^{\frac{1-N}{2}}e^{-\varepsilon\sqrt{\lambda_{n_\tau}} r_{m,i}}
		\end{eqnarray}
		in $\bbr^N$ for $j=1,2,\cdots,d-1$, and
		\begin{eqnarray}\label{eqnnnew9990}
			|Q_{d,*}|\lesssim(\rho_d\alpha_{d-1})^{\frac{1-N}{2}}e^{-\sqrt{\lambda_d}\rho_d\alpha_{d-1}}\sum_{\tau,i,m}(1+r_{m,i})^{\frac{1-N}{2}}e^{-\varepsilon\sqrt{\lambda_{n_\tau}} r_{m,i}}
		\end{eqnarray}
		in $\bbr^N$.  Since by \eqref{eqn2038}, \eqref{eqnnnew2038}, \eqref{eqnn0005}, \eqref{eqnn9996} and Lemma~\ref{lem0004},
		\begin{eqnarray}\label{eqnnnew1111}
			\gamma_{\theta_{j}}\sim\frac{\int_{\bbr^N}E_{j,3}\partial_{\theta_{j}}W_jdx}{\rho^2\vartheta}\quad\text{and}\quad
			\gamma_{\rho_{j}}\sim\frac{\int_{\bbr^N}E_{j,3}\partial_{\rho_{j}}W_jdx}{\vartheta}
		\end{eqnarray}
		for all $j$, by Lemma~\ref{lem0004} once more,
		\begin{eqnarray*}
			\int_{\bbr^N\backslash(\cup_{t,l}B_{\delta\widehat{\eta}}(\eta_{t,l}))}K_j(x-y)\widetilde{Q}_{j,*}(y)dy=o(\|E_{j,3}\|_{L^\infty(\bbr^N)}).
		\end{eqnarray*}
		For $\int_{\cup_{t,l}B_{\delta\widehat{\eta}}(\eta_{t,l})}K(x-y)\widetilde{Q}_{j,*}(y)dy$,
		if $x\in\bbr^N\backslash(\cup_{t,l}B_{\delta\widehat{\eta}}(\eta_{t,l}))$, then by Lemma~\ref{lem0004}, $\lambda_{n_k}<4\lambda_{n_1}$ and \eqref{eqnnnew1111} once more,
		\begin{eqnarray*}
			&&\int_{\cup_{t,l}B_{\delta\widehat{\eta}}(\eta_{t,l})}K(x-y)\widetilde{Q}_{j,**}(y)dy\\
			&=&o\bigg(\|E_{j,3}\|_{L^\infty(\bbr^N)}(\int_{\cup_{t,l}B_{\frac{\delta}{2}\widehat{\eta}}(\eta_{t,l})}K_j^{1-\sigma'}(x-y)dy
			+\int_{\cup_{t,l}(B_{\delta\widehat{\eta}}(\eta_{t,l})\backslash B_{\frac{\delta}{2}\widehat{\eta}}(\eta_{t,l}))}K_j(x-y)dy)\bigg)\\
			&=&o(\|E_{j,3}\|_{L^\infty(\bbr^N)}).
		\end{eqnarray*}
		It remains to consider $\int_{\cup_{t,l}B_{\delta\widehat{\eta}}(\eta_{t,l})}K_j(x-y)\widetilde{Q}_{j,*}(y)dy$ in the cases of $x\in B_{\delta\widehat{\eta}}(\eta_{s,p})$ for some $s$ and $p$.  In these cases, by similar arguments as used above,
		\begin{eqnarray*}\label{eqnnnew1112}
			&&\int_{\cup_{t,l}B_{\delta\widehat{\eta}}(\eta_{t,l})}K(x-y)\widetilde{Q}_{j,*}(y)dy\\
			&=&\left\{\aligned&\int_{B_{\delta\widehat{\eta}}(\eta_{t,j})}K_j(x-y)(3\mu_j\widetilde{w}_{t,j}(y)^2Q_{j,*}(y)+E_{j,3}(y))dy
			+o(\|E_{j,3}\|_{L^\infty(\bbr^N)}),\quad p=j,\notag\\
			&\int_{B_{\delta\widehat{\eta}}(\eta_{t,i})}K_j(x-y)(\beta_{i,j}\widetilde{w}_{t,i}(y)^2Q_{j,*}(y)+E_{j,3}(y))dy+o(\|E_{j,3}\|_{L^\infty(\bbr^N)}), \quad p=i.\endaligned\right.
		\end{eqnarray*}
		Moreover, it is easy to see that
		\begin{eqnarray*}
			E_{j,3}(y)\sim((\rho_j\alpha_j)^{1-N}e^{-2\sqrt{\lambda_{j+1}}\rho_j\alpha_j}+
			(\rho_j\alpha_{j-1})^{1-N}e^{-2\sqrt{\lambda_{j-1}}\rho_j\alpha_{j-1}})\widetilde{w}_{t,j}(y)
		\end{eqnarray*}
		in $B_{R}(\eta_{t,j})$ for all $t$ and
		\begin{eqnarray*}
			E_{j,3}(y)\sim|\eta_{t,j}-\eta_{t,i}|^{\frac{1-N}{2}}e^{-\sqrt{\lambda_{j}}|\eta_{t,j}-\eta_{t,i}|}\widetilde{w}^2_{t,i}(y)
		\end{eqnarray*}
		in $B_{R}(\eta_{t,i})$ for all $i\not=j$ and all $t$ with any $R>0$ sufficiently large.  Let
		\begin{eqnarray*}
			Q_{j,**,t,l}=\frac{Q_{j,*}(\cdot+\eta_{t,l})}{(\rho_j\alpha_j)^{\frac{1-N}{2}}e^{-\sqrt{\lambda_{j}}\rho_j\alpha_j}}
		\end{eqnarray*}
		for all $l$, $t$ and $j=1,2,\cdots,d-1$, and
		\begin{eqnarray*}
			Q_{d,**,t,l}=\frac{Q_{d,*}(\cdot+\eta_{t,l})}{(\rho_d\alpha_d)^{\frac{1-N}{2}}e^{-\sqrt{\lambda_{d}}\rho_d\alpha_{d-1}}}
		\end{eqnarray*}
		for all $l,t$.
		We recall that $\mathbf{Q}_*$ is the solution of \eqref{eqnew9988}.  Then by Lemma~\ref{lem0004}, $\lambda_{n_k}<4\lambda_{n_1}$ and \eqref{eqnnnew1111},
		it is easy to show that $Q_{j,**,t,l}\to 0$ uniformly on every compact subset of $\bbr^N$ for all $l\not=j-1$ and $j+1$.  For $Q_{j,**,t,j\pm1}$, either $Q_{j,**,t,j\pm1}\to 0$ or $Q_{j,**,t,j\pm1}\to \phi_{j,t,j\pm1}$ uniformly on every compact set of $\bbr^N$,
		where
		\begin{equation}\label{eqnnnew1122}
			-\Delta\phi_{j,t,j\pm1}+\lambda_j\phi_{j,t,j\pm1}-\beta_{i,j}w_i^2\phi_{j,t,j\pm1}=\beta_{i,j}w_i^2,\quad\text{in }\bbr^N
		\end{equation}
		Note that $\beta_{i,j}$ is not an eigenvalue of $-\Delta+\lambda_j$ in $L^2(\bbr^N; w_i^2)$, thus, $\phi_{j,t,j\pm1}$ are unique.  It follows that
		\begin{eqnarray*}
			&&\int_{\cup_{t,l}B_{\delta\widehat{\eta}}(\eta_{t,l})}K(x-y)\widetilde{Q}_{j,*}(y)dy\\
			&\sim&\left\{\aligned&O(e^{-\delta'R}\|E_{j,3}\|_{L^\infty(\bbr^N)}),\quad l\not=j\pm1,\\
			&\|E_{j,3}\|_{L^\infty(\bbr^N)}\int_{B_{R}(\eta_{t,j+1})}K_j(x-y)\widetilde{w}_{t,j+1}(y)^2(\phi_{j,t,j+1}+1)dy
			+O(e^{-\delta'R}\|E_{j,3}\|_{L^\infty(\bbr^N)}), \quad l=j+1,\\
			&\|E_{j,3}\|_{L^\infty(\bbr^N)}\int_{B_{R}(\eta_{t,j-1})}K_j(x-y)\widetilde{w}_{t,j-1}(y)^2(\phi_{j,t,j-1}+1)dy
			+O(e^{-\delta'R}\|E_{j,3}\|_{L^\infty(\bbr^N)}), \quad l=j-1,
			\endaligned\right.
		\end{eqnarray*}
		where $\delta'>0$ is sufficiently small and $R>0$ is sufficiently large.
		Now, summarizing the above estimates,
		\begin{eqnarray*}
			Q_{j,*}(x)&=&\left\{\aligned&o(\|E_{j,3}\|_{L^\infty(\bbr^N)}),\quad x\in\bbr^N\backslash(\cup_{t,l}B_{\delta\widehat{\eta}}(\eta_{t,l})),\\
			&o(\|E_{j,3}\|_{L^\infty(\bbr^N)}),\quad x\in B_{\delta\widehat{\eta}}(\eta_{t,l})\text{ with }l\not=j\pm1,\\
			&D_j(x)\|E_{j,3}\|_{L^\infty(\bbr^N)}+o(\|E_{j,3}\|_{L^\infty(\bbr^N)}),\quad x\in B_{\delta\widehat{\eta}}(\eta_{t,l})\text{ with }l=j\pm1,
			\endaligned\right.
		\end{eqnarray*}
		where
		\begin{eqnarray*}
			D_j(x)&\sim&\int_{B_{R}(\eta_{t,j+1})}K_j(x-y)\widetilde{w}_{t,j+1}(y)^2(\phi_{j,t,j+1}+1)dy\\
			&&+\int_{B_{R}(\eta_{t,j-1})}K_j(x-y)\widetilde{w}_{t,j-1}(y)^2(\phi_{j,t,j-1}+1)dy+O(e^{-\delta' R}),
		\end{eqnarray*}
		which, together with \eqref{eqnnnew0010}, \eqref{eqnnnew0011}, \eqref{eqnn9990} and \eqref{eqnnnew9990}, completes the proof.
	\end{proof}
	
	\begin{remark}\label{rmk0001}
		By Lemmas~\ref{lem0001}, \ref{lem0002}--\ref{lem0003}, \ref{lem0004} and \ref{lem0006}, we know that
		\begin{eqnarray*}
			\gamma_{\theta_{j}}^*\lesssim(1+o(1))\rho^{-1}(\rho^{-\nu_j}+\widehat{\eta}^{\frac{1-N}{2}}e^{-2\sqrt{\lambda_{n_1}}\widehat{\eta}})
		\end{eqnarray*}
		and
		\begin{eqnarray*}
			\gamma_{\rho_{j}}^*\lesssim(1+o(1))(\rho^{-\nu_j}+\widehat{\eta}^{\frac{1-N}{2}}e^{-2\sqrt{\lambda_{n_1}}\widehat{\eta}}).
		\end{eqnarray*}
		Thus, by \eqref{eqnn9997} and , we can go through the arguments in the proof of Lemma~\ref{lem0006} to show that
		\begin{eqnarray*}
			U_{j,**,t,l}\to\left\{\aligned&0,\quad l\not=j\pm1,\\
			&\phi_{j,t,j\pm1},\quad l=j\pm1 \endaligned\right.
		\end{eqnarray*}
		on every compact set of $\bbr^N$, where
		\begin{eqnarray*}
			U_{j,**,t,l}=\frac{U_{j}(\cdot+\eta_{t,l})}{(\rho_j\alpha_j)^{\frac{1-N}{2}}e^{-\sqrt{\lambda_{j}}\rho_j\alpha_j}}
		\end{eqnarray*}
		for all $l$, $t$ and $j=1,2,\cdots,d-1$,
		\begin{eqnarray*}
			U_{d,**,t,l}=\frac{U_{d}(\cdot+\eta_{t,l})}{(\rho_d\alpha_d)^{\frac{1-N}{2}}e^{-\sqrt{\lambda_{d}}\rho_d\alpha_{d-1}}}
		\end{eqnarray*}
		for all $l,t$ and $\phi_{j,t,j\pm1}$ is a solution of \eqref{eqnnnew1122}.  Moreover, it is easy to see that $\phi_{j,t,j\pm1}$ are positive if $\beta_{j,j\pm1}\leq\beta_{j,j\pm1,*}$ and $\phi_{j,t,j\pm1}$ are sign-changing if $\beta_{j,j\pm1}>\beta_{j,j\pm1,*}$, where $\beta_{j,j\pm1,*}$ are the first eigenvalue of $-\Delta+\lambda_j$ in $L^2(\bbr^N; w_{j\pm1}^2)$.  It follows that $\mathbf{U}=\mathbf{W}+\mathbf{Q}_*+\mathbf{v}_{**}$ can not be a solution of \eqref{eq0001} if $\beta_{j,j\pm1}>\beta_{j,j\pm1,*}$ for some $j$.
	\end{remark}

	Let $m_j$ be the energy of $w_j$.  Then by \eqref{eqnew9988}, \eqref{eq9003}, the symmetry of the construction of $\{\eta_{t,j}\}$ and Lemmas~\ref{lem0002}--\ref{lem0003} and \ref{lem0006},
	\begin{eqnarray*}
		\mathcal{J}(\overrightarrow{\rho},\overrightarrow{\alpha})&=&\sum_{j=1}^{d}(\vartheta m_j+\frac{1}{2}\int_{\bbr^N}(V_j(x)-\lambda_j)W_j^2dx-\mu_j(\sum_{s<t}\int_{\bbr^N}\widetilde{w}_{t,j}^3\widetilde{w}_{s,j}dx\notag\\
		&&-\frac{3}{2}\sum_{t,l,s;s\not=t,l\not=t}\int_{\bbr^N}\widetilde{w}_{t,j}^2\widetilde{w}_{s,j}\widetilde{w}_{l,j}dx
		-\sum_{t,l,s,p;s\not=t,l\not=t,p\not=t,s\not=l,l\not=p,s\not=p}\int_{\bbr^N}\widetilde{w}_{t,j}\widetilde{w}_{s,j}\widetilde{w}_{l,j}\widetilde{w}_{p,j}dx)\notag\\
		&&-\frac12\sum_{i,j;i\not=j}\beta_{i,j}\int_{\bbr^N}W_i^2W_j^2dx-\frac{1}{2}\sum_{j=1}^{d}(\int_{\bbr^N}E_{j}(Q_{j,*}+v_{j,**})dx\notag\\
		&&+\gamma_{\theta_{j}}^*\int_{\bbr^N}\partial_{\theta_{j}}W_j(Q_{j,*}+v_{j,**})dx+\gamma_{\rho_{j}^*}\int_{\bbr^N}\partial_{\rho_{j}}W_j(Q_{j,*}+v_{j,**})dx)\notag\\
		&&+O(\vartheta(\sum_{j=1}^{d}(\|Q_{j,*}\|_{L^{\infty}(\bbr^N)}+\|v_{j,**}\|_{L^{\infty}(\bbr^N)}))(\sum_{i=1}^{d}(\|Q_{i,*}\|_{L^{\infty}(\bbr^N)}^2+\|v_{i,**}\|_{L^{\infty}(\bbr^N)}^2)))\notag.
	\end{eqnarray*}
	\begin{proposition}\label{prop0002}
		Suppose the assumptions~$(V_1)$--$(V_2)$ hold and $\beta_{i,j}$ is not an eigenvalue of $-\Delta+\lambda_j$ in $L^2(\bbr^N; w_i^2)$, that is,
		\begin{eqnarray*}
			-\Delta v+\lambda_j v=\beta_{i,j}w_i^2v \quad\text{in }\bbr^N
		\end{eqnarray*}
		has no solutions in $H^2(\bbr^N)$ with $\beta_{j,j+1}<\beta_{j,j+1,*}$ for all $j$ where $\beta_{j,j+1,*}$ are the first eigenvalue of $-\Delta+\lambda_j$ in $L^2(\bbr^N; w_{j+1}^2)$.  If $N=2,3$, $\lambda_{n_k}<4\lambda_{n_1}$ under the condition~\eqref{eq0002}, $\alpha_{n_{\tau+1}}<\alpha_{n_{\tau}}$ for all $\tau=1,2,\cdots,k-2$ and $\alpha_{n_k}=(1+o(1))\alpha_{n_1}$ with
		\begin{eqnarray*}
			\max\{\max\{\sqrt{\lambda_{n_\tau}}\alpha_{n_{\tau}}\}, \sqrt{\lambda_{d}}\alpha_{d-1}\}\leq2\min\{\min\{\sqrt{\lambda_{n_\tau'}}\alpha_{n_{\tau'}}\},\sqrt{\lambda_{d}}\alpha_{d-1}\}
		\end{eqnarray*}
		in the case of $k>1$
		and $\alpha_{j}=(1+o(1))\alpha_{n_{\tau}}$ for all $j=n_{\tau-1}+1,n_{\tau-1}+2,\cdots,n_{\tau}$,
		then $\mathcal{J}(\overrightarrow{\rho},\overrightarrow{\alpha})$ has a critical point for $\min_{j}\nu_j>1$, provided
		\begin{enumerate}
			\item[$(a)$]\quad $\sum_{j\in\mathfrak{m}_{*}}B_j\delta_j>0$ and $\sum_{j=n_{\tau-1}+1}^{n_{\tau}-1}\beta_{j,j+1}>0$, $\beta_{n_{\tau},n_{\tau}+1}>0$ for all $\tau=1,2,\cdots,k-1$,
			\item[$(b)$]\quad $\sum_{j\in\mathfrak{m}_{*}}B_j\delta_j<0$ and $\sum_{j=n_{\tau-1}+1}^{n_{\tau}-1}\beta_{j,j+1}<0$, $\beta_{n_{\tau},n_{\tau}+1}<0$ for all $\tau=1,2,\cdots,k-1$ in the case of $d\geq3$,
			\item[$(c)$]\quad $\sum_{j\in\mathfrak{m}_{*}}B_j\delta_j>0$, and $-2\pi^{-\frac12}D_1^{-1}C_1<\beta_{1,2}<0$ and $\lambda_1=\lambda_2$ in the case of $N=2$ and $d=2$ while, $\beta_{1,2}<0$ in the cases of $N=3$ and $d=2$ or $N=2$, $d=2$ with $\lambda_1\not=\lambda_2$,
			\item[$(d)$]\quad $\sum_{j\in\mathfrak{m}_{*}}B_j\delta_j<0$, $\beta_{1,2}<-2\pi^{-\frac12}D_1^{-1}C_1<0$ and $\lambda_1=\lambda_2$ in the case of $N=2$ and $d=2$,
		\end{enumerate}
		where $B_j,C_j,D_{\tau}>0$ are given by \eqref{eqnnnew1127}, \eqref{eqnnnew1126}, and \eqref{eqnnnew1123}, respectively.
	\end{proposition}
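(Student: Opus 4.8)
\textbf{Proof of Proposition~\ref{prop0002} (proposal).}
The goal is to exhibit a critical point of the finite-dimensional functional $\mathcal{J}(\overrightarrow{\rho},\overrightarrow{\alpha})$: by Lemma~\ref{lem0005} this suffices, since $\nabla\mathcal{J}=\mathbf{0}$ forces $\gamma_{\rho_l}^*=\gamma_{\theta_l}^*=0$ for all $l$, and hence $\mathbf{U}=\mathbf{W}+\mathbf{Q}_*+\mathbf{v}_{**}$ solves \eqref{eq0001}. The first step is to upgrade the expansion of $\mathcal{J}$ displayed just before the statement to a \emph{sharp} leading-order expansion. Inserting \eqref{eqnnnew1127} for the potential integrals $\int(V_j-\lambda_j)W_j^2$, \eqref{eqnnnew1126} together with \eqref{eq0009} for the same-component interaction $\sum_{s<t}\int\widetilde{w}_{t,j}^3\widetilde{w}_{s,j}$ (distance $\widetilde\eta_j=(1+o(1))\tfrac{2\pi\rho}{\vartheta}$), \eqref{eqnnnew1123}--\eqref{eqnnnew1125} with \eqref{eq0010}--\eqref{eq0011} for the different-component interaction $\int W_i^2W_j^2$ (distances $\rho\alpha_j$), and Lemmas~\ref{lem0002}--\ref{lem0004} and \ref{lem0006} for the terms carrying $\mathbf{Q}_*$, $\mathbf{v}_{**}$ and the $\gamma$'s, I would reach a formula of the schematic shape
\[
\mathcal{J}=\vartheta\sum_{j=1}^d m_j+\frac{\vartheta}{2}\sum_{j=1}^d B_j\delta_j\rho_j^{-\nu_j}-\vartheta\sum_{j=1}^d\mu_jC_j\,\kappa_N\!\Big(\tfrac{2\pi\rho}{\vartheta}\Big)-\frac{\vartheta}{2}\sum_{j}\beta_{j,j+1}\,\varpi_N(\rho\alpha_j)+\mathcal{R}+h.o.t.,
\]
where $\kappa_N,\varpi_N$ are the explicit positive, strictly decreasing, log-convex kernels read off from \eqref{eqnnnew1126} and \eqref{eqnnnew1123}--\eqref{eqnnnew1125}, the last sum runs over the $d$ nearest-neighbour component pairs read cyclically (so the pair $(u_d,u_1)$ contributes through the interface/wrap-around kernels of \eqref{eqnnnew1124}--\eqref{eqnnnew1125}, with rate dictated by \eqref{eq0002}), and $\mathcal{R}$ collects the correction contribution, which by Lemma~\ref{lem0006} is negligible except for the indices $j=n_\tau$, where it merely renormalises the coefficient of the corresponding different-component term; in the resonant case $N=2$, $d=2$, $\lambda_1=\lambda_2$ it is exactly this renormalisation that produces the threshold $-2\pi^{-1/2}D_1^{-1}C_1$.

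\emph{Tangential direction.} Fixing $\rho\sim\vartheta\log\vartheta$, the only leading-order $\overrightarrow{\alpha}$-dependence is the nearest-neighbour sum $-\tfrac{\vartheta}{2}\sum\beta_{j,j+1}\varpi_N(\rho\alpha_j)$, constrained by $\sum_{i=0}^d\alpha_i=\tfrac{2\pi}{\vartheta}$. Using log-convexity of $\varpi_N$ and the sign pattern of $(a)$ ($\sum_{j=n_{\tau-1}+1}^{n_\tau-1}\beta_{j,j+1}>0$ and $\beta_{n_\tau,n_\tau+1}>0$), this is strictly concave along the constraint and attains an interior maximum at the balanced configuration described in the Introduction: $\alpha_j=(1+o(1))\alpha_{n_\tau}$ for $j\in\mathfrak{n}_\tau$, the $\alpha_{n_\tau}$ strictly ordered across $\tau$ with $\alpha_{n_k}=(1+o(1))\alpha_{n_1}$ and the products $\sqrt{\lambda_{n_\tau}}\alpha_{n_\tau}$ mutually comparable; under the sign pattern of $(b)$ it is strictly convex and the interior extremum is a minimum at the same configuration. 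In either case this configuration meets the a priori hypotheses imposed in Lemmas~\ref{lem0004}, \ref{lem0006} and in the expansion, so the expansion is valid on a neighbourhood of it, and since $\rho\alpha_j\sim\log\vartheta$ there, $\mathcal{R}$ stays lower order apart from the coefficient renormalisations at $j=n_\tau$.

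\emph{Normal direction.} Restricting $\mathcal{J}$ to a neighbourhood of the balanced tangential configuration and viewing it as a function of the radius $\rho$ (recall $\rho_j=\rho+O(1)$), the $\rho$-leading part is $\tfrac{\vartheta}{2}\big(\sum_{j\in\mathfrak{m}_*}B_j\delta_j\big)\rho^{-\nu_*}$ plus the dominant interaction. For $d\ge3$, Lemma~\ref{lem0006} shows the different-component interaction dominates the same-component one, so the competition is potential vs. $-\tfrac{\vartheta}{2}\sum\beta_{j,j+1}\varpi_N(\rho\alpha_j)$; on the window $\rho\in[c_1\vartheta\log\vartheta,c_2\vartheta\log\vartheta]$ this one-variable function increases then decreases when $\sum_{j\in\mathfrak{m}_*}B_j\delta_j>0$ and the relevant $\beta$'s are positive (interior maximum, case $(a)$), and decreases then increases when both are negative (interior minimum, case $(b)$; this explains why $d\ge3$ is required there, as for $d=2$ the only dominant interaction has the ``wrong'' sign). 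For $d=2$ the same-component interaction dominates instead, except in the resonant case $N=2$, $\lambda_1=\lambda_2$ where both are of order $e^{-2\pi\sqrt{\lambda_1}\rho/\vartheta}$: in the non-resonant sub-cases of $(c)$ ($N=3$, or $N=2$ with $\lambda_1\ne\lambda_2$) the competition is potential vs. same-component interaction and, with $\sum_{j\in\mathfrak{m}_*}B_j\delta_j>0$ and $\beta_{1,2}<0$ keeping the subdominant cross term cooperative, one again obtains the increasing-then-decreasing profile and an interior maximum; in the resonant sub-case the coefficient of the common exponential is, after $\mathcal{R}$, a positive multiple of $2\pi^{-1/2}D_1^{-1}C_1+\beta_{1,2}$, hence positive for $-2\pi^{-1/2}D_1^{-1}C_1<\beta_{1,2}<0$ (interior maximum with $\sum_{j\in\mathfrak{m}_*}B_j\delta_j>0$, case $(c)$) and negative for $\beta_{1,2}<-2\pi^{-1/2}D_1^{-1}C_1$ (interior minimum with $\sum_{j\in\mathfrak{m}_*}B_j\delta_j<0$, case $(d)$). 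In the borderline situation of $(c)$ where the potential and the dominant interaction compete with opposite monotonicities and comparable magnitude along the window, I would instead extract the interior critical point by a two-parameter min--max (mountain-pass type) argument in $(\rho,\alpha_1)$, the two ends of the admissible paths being configurations on which $\mathcal{J}$ lies strictly below the min--max level.

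\textbf{The main obstacle.} Everything hinges on showing that the maximum, minimum or min--max value of $\mathcal{J}$ is attained in the \emph{interior} of the chosen configuration set, so that the extremal point is a genuine critical point; equivalently, one must control $\mathcal{J}$ (and, in the min--max case, its negative gradient flow) on the boundary of the set uniformly in $\vartheta$. This requires the expansions of Lemmas~\ref{lem0001}--\ref{lem0006} in their sharp form — the exact constants $B_j,C_j,D_\tau,D_\tau'$, the precise leading order of $\mathbf{Q}_*$ from Lemma~\ref{lem0006}, and the sign of $2\pi^{-1/2}D_1^{-1}C_1+\beta_{1,2}$ in the resonant $N=2$, $d=2$, $\lambda_1=\lambda_2$ case — together with the check that every remainder (from $\mathbf{Q}_*$ and $\mathbf{v}_{**}$ via Lemmas~\ref{lem0002}--\ref{lem0003}, and from the $\gamma^*$-corrections) is $o(\cdot)$ of the competing leading terms \emph{throughout} the set, not only at the optimal point; the pinching condition $\lambda_{n_k}<4\lambda_{n_1}$ and the bounds $\beta_{j,j+1}<\beta_{j,j+1,*}$ enter precisely to secure these estimates and the consistency of the whole scheme (cf.\ Remark~\ref{rmk0001}). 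Once interiority is established, Lemma~\ref{lem0005} converts the critical point of $\mathcal{J}$ into the vanishing of all $\gamma_{\rho_l}^*$ and $\gamma_{\theta_l}^*$, which finishes the proof.
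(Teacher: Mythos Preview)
Your route is essentially the paper's: expand $\mathcal{J}$ to leading order via Lemmas~\ref{lem0001}--\ref{lem0006}, pass to scaled variables $\rho_j=\rho^*\vartheta\log\vartheta$, $\alpha_j=\alpha_j^*/\vartheta$, and locate an interior critical point by a max/min/min-max argument. One difference in execution: for the tangential direction you argue by log-convexity of the interaction kernels, whereas the paper works on the compact set $\mathcal{M}=\{\alpha_{n_\tau}^*\in(0,2\pi):\alpha_{n_{\tau+1}}^*\le\alpha_{n_\tau}^*\}$ and rules out each boundary configuration directly (either some $\alpha_{n_\tau}^*\to0$ or $2\pi$, which forces blow-up, or two adjacent $\alpha_{n_\tau}^*$'s coincide, which contradicts the first-order conditions since they would force $\sqrt{\lambda_{n_{\tau'}}}\alpha_{n_{\tau'}}^*\le\sqrt{\lambda_{n_\tau}}\alpha_{n_\tau}^*$ with $\tau'>\tau$), reading off the explicit balanced relation $\sqrt{\lambda_{n_\tau}}\alpha_{n_\tau}^*=\sqrt{\lambda_{n_{\tau'}}}\alpha_{n_{\tau'}}^*+o(1)$. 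Either way one lands on a one-variable function of $\rho^*$ whose interior extremum sits at $\rho^*=\nu_*/\lambda_*+o(1)$.

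There is, however, a gap in your handling of case~$(c)$. Your tangential paragraph only covers the sign patterns of $(a)$ and $(b)$; in $(c)$ one has $d=2$ and $\beta_{1,2}<0$, so the cross term $-\beta_{1,2}\,\varpi_N(\rho\alpha_j)$ is positive and diverges as $\alpha_j\to0$, making the balanced point a tangential \emph{minimum}. Combined with the radial maximum you correctly identify, the critical point in $(c)$ is a saddle throughout --- not only in a ``borderline situation'' --- and a pure maximisation cannot produce it. The paper accordingly treats the whole of $(c)$ by switching to $\widetilde{\mathcal{J}}=-\mathcal{J}$, forming $\min_{\rho_*}\max_{\overrightarrow{\alpha}_*}\widetilde{\mathcal{J}}$, and invoking the linking theorem. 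Your two-parameter min-max in $(\rho,\alpha_1)$ is the right mechanism, but it must be the argument for all of $(c)$, not an afterthought for a special sub-case.
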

	\begin{proof}
		Let
		\begin{eqnarray*}
			\mathcal{J}_*(\overrightarrow{\rho},\overrightarrow{\alpha})&=&\sum_{j=1}^{d}(\vartheta m_j+\frac{1}{2}\int_{\bbr^N}(V_j(x)-\lambda_j)W_j^2dx-\mu_j\sum_{s<t}\int_{\bbr^N}\widetilde{w}_{t,j}^3\widetilde{w}_{t,j}dx\\
			&&-\frac12\sum_{i=1;i\not=j}^{d}\beta_{i,j}\int_{\bbr^N}W_i^2W_j^2dx).
		\end{eqnarray*}
		Then by the assumption~$(V_2)$, the symmetry of the construction of $\{\eta_{t,j}\}$, the assumptions on $\{\alpha_j\}$, \cite[Proposition~A.2]{WY10} and \cite[Lemma~3.7]{ACR07},
		\begin{eqnarray*}
			\vartheta^{-1}\mathcal{J}_*(\overrightarrow{\rho},\overrightarrow{\alpha})&=&\sum_{j=1}^{d}(m_j+\frac{B_j\delta_j+o(1)}{\rho_j^{\nu_j}}-(C_j+o(1))\widehat{\eta}_{j}^{\frac{1-N}{2}}e^{-\sqrt{\lambda_{j}}\widehat{\eta}_{j}})\notag\\
			&&-\sum_{\tau=1}^{k}\sum_{j=n_{\tau-1}+1}^{n_{\tau}-1}D_{\tau}\beta_{j,j+1}(\rho_j\alpha_j)^{-\frac12}e^{-2\sqrt{\lambda_{n_{\tau}}}\rho_j\alpha_j}\\
			&&-\sum_{\tau=1}^{k-1}D_{\tau}'\beta_{n_{\tau},n_{\tau}+1}(\rho_{n_{\tau}}\alpha_{n_{\tau}})^{-1}e^{-2\sqrt{\lambda_{n_{\tau}}}\rho_{n_{\tau}}\alpha_{n_{\tau}}}\\
			&&-D_{k}'\beta_{d,1}(\rho_{n_{k}}\alpha_{n_{k}})^{-1}e^{-2\sqrt{\lambda_{n_{1}}}\rho_{n_{k}}\alpha_{n_{k}}}+h.o.t.
		\end{eqnarray*}
		for $N=2$ and
		\begin{eqnarray*}
			\vartheta^{-1}\mathcal{J}_*(\overrightarrow{\rho},\overrightarrow{\alpha})&=&\sum_{j=1}^{d}(m_j+\frac{B_j\delta_j+o(1)}{\rho_j^{\nu_j}}-(C_j+o(1))\widehat{\eta}_{j}^{\frac{1-N}{2}}e^{-\sqrt{\lambda_{j}}\widehat{\eta}_{j}})\notag\\
			&&-\sum_{\tau=1}^{k}\sum_{j=n_{\tau-1}+1}^{n_{\tau}-1}D_{\tau}\beta_{j,j+1}(\rho_j\alpha_j)^{-2}e^{-2\sqrt{\lambda_{n_{\tau}}}\rho_j\alpha_j}\log(\rho_j\alpha_j)\\
			&&-\sum_{\tau=1}^{k-1}D_{\tau}'\beta_{n_{\tau},n_{\tau}+1}(\rho_{n_{\tau}}\alpha_{n_{\tau}})^{-2}e^{-2\sqrt{\lambda_{n_{\tau}}}\rho_{n_{\tau}}\alpha_{n_{\tau}}}\\
			&&-D_{k}'\beta_{d,1}(\rho_{n_{k}}\alpha_{n_{k}})^{-2}e^{-2\sqrt{\lambda_{n_{1}}}\rho_{n_{k}}\alpha_{n_{k}}}+h.o.t.
		\end{eqnarray*}
		for $N=3$, where $B_j,C_j,D_{\tau},D_{\tau}'>0$ are given by \eqref{eqnnnew1127}, \eqref{eqnnnew1126}, \eqref{eqnnnew1123}, \eqref{eqnnnew1124} and \eqref{eqnnnew1125}, respectively.
		On the other hand, by \cite[Lemma~3.7]{ACR07},
		\begin{eqnarray*}
			\sum_{t,l,s;s\not=t,l\not=t}\int_{\bbr^N}\widetilde{w}_{t,j}^2\widetilde{w}_{s,j}\widetilde{w}_{l,j}dx=o(\widehat{\eta}_{j}^{\frac{1-N}{2}}e^{-\sqrt{\lambda_{j}}\widehat{\eta}_{j}})
		\end{eqnarray*}
		and
		\begin{eqnarray*}
			\sum_{t,l,s,p;s\not=t,l\not=t,p\not=t,s\not=l,l\not=p,s\not=p}\int_{\bbr^N}\widetilde{w}_{t,j}\widetilde{w}_{s,j}\widetilde{w}_{l,j}\widetilde{w}_{p,j}dx=o(\widehat{\eta}_{j}^{\frac{1-N}{2}}e^{-\sqrt{\lambda_{j}}\widehat{\eta}_{j}}).
		\end{eqnarray*}
		By $\lambda_{n_k}<4\lambda_{n_1}$, the symmetry of the construction of $\{\eta_{t,j}\}$, the assumptions on $\{\alpha_j\}$, Lemmas~\ref{lem0001}, \ref{lem0002}--\ref{lem0003}, \ref{lem0004} and \ref{lem0006}, we know that the terms including $Q_{j,*}$ and $v_{j,**}$ are all h.o.t. of $\mathcal{J}_*(\overrightarrow{\rho},\overrightarrow{\alpha})$, expect the terms $\int_{\bbr^N}E_{j,3}Q_{j,*}dx$, where by Lemma~\ref{lem0006},
		\begin{eqnarray*}
			\int_{\bbr^N}E_{j,3}Q_{j,*}dx\sim\beta_{j,j+1}(\rho_j\alpha_j)^{-2}e^{-2\sqrt{\lambda_{j}}\rho_j\alpha_j}\quad\text{for all }j.
		\end{eqnarray*}
		Now, by Remark~\ref{rmk0001}, if $\beta_{j,j+1}<\beta_{j,j+1,*}$ for all $j$, then
		\begin{eqnarray}
			\vartheta^{-1}\mathcal{J}(\overrightarrow{\rho},\overrightarrow{\alpha})&=&\sum_{j=1}^{d}(m_j+\frac{B_j\delta_j+o(1)}{\rho_j^{\nu_j}}-(C_j+o(1))
			\widehat{\eta}_{j}^{\frac{1-N}{2}}e^{-\sqrt{\lambda_{j}}\widehat{\eta}_{j}})\notag\\
			&&-\sum_{\tau=1}^{k}\sum_{j=n_{\tau-1}+1}^{n_{\tau}-1}D_{\tau}\beta_{j,j+1}(\rho_j\alpha_j)^{-\frac12}e^{-2\sqrt{\lambda_{n_{\tau}}}\rho_j\alpha_j}\notag\\
			&&-\sum_{\tau=1}^{k-1}D_{\tau}''\beta_{n_{\tau},n_{\tau}+1}(\rho_{n_{\tau}}\alpha_{n_{\tau}})^{-1}e^{-2\sqrt{\lambda_{n_{\tau}}}\rho_{n_{\tau}}\alpha_{n_{\tau}}}\notag\\
			&&-D_{k}''\beta_{d,1}(\rho_{n_{k}}\alpha_{n_{k}})^{-1}e^{-2\sqrt{\lambda_{n_{1}}}\rho_{n_{k}}\alpha_{n_{k}}}+h.o.t.
			\label{eqnn9989}
		\end{eqnarray}
		for $N=2$ and
		\begin{eqnarray}
			\vartheta^{-1}\mathcal{J}(\overrightarrow{\rho},\overrightarrow{\alpha})&=&\sum_{j=1}^{d}(m_j+\frac{B_j\delta_j+o(1)}{\rho_j^{\nu_j}}-(C_j+o(1))
			\widehat{\eta}_{j}^{\frac{1-N}{2}}e^{-\sqrt{\lambda_{j}}\widehat{\eta}_{j}})\notag\\
			&&-\sum_{\tau=1}^{k}\sum_{j=n_{\tau-1}+1}^{n_{\tau}-1}D_{\tau}\beta_{j,j+1}(\rho_j\alpha_j)^{-2}e^{-2\sqrt{\lambda_{n_{\tau}}}\rho_j\alpha_j}\log(\rho_j\alpha_j)\notag\\
			&&-\sum_{\tau=1}^{k-1}D_{\tau}''\beta_{n_{\tau},n_{\tau}+1}(\rho_{n_{\tau}}\alpha_{n_{\tau}})^{-2}e^{-2\sqrt{\lambda_{n_{\tau}}}\rho_{n_{\tau}}\alpha_{n_{\tau}}}\notag\\
			&&-D_{k}''\beta_{d,1}(\rho_{n_{k}}\alpha_{n_{k}})^{-2}e^{-2\sqrt{\lambda_{n_{1}}}\rho_{n_{k}}\alpha_{n_{k}}}+h.o.t.
			\label{eqnnn9989}
		\end{eqnarray}
		for $N=3$, where $D_{n_\tau}''>0$ for all $\tau$.  Recall that $\rho_j\sim\vartheta\log\vartheta$ and $\alpha_j\sim\frac{1}{\vartheta}$ with $\sum_{j=1}^d\alpha_j=\frac{2\pi}{\vartheta}$ for all $j$.  Thus, we assume that $\rho_j=\rho_j^*\vartheta\log\vartheta$ and $\alpha_j=\frac{\alpha_j^*}{\vartheta}$
		where $\rho_j^*>0$ and $\alpha_j^*\in(0, 2\pi)$ with $\sum_{j=1}^d\alpha_j^*=2\pi$.  By the assumptions of $\{\alpha_j\}$, $\alpha_{j}^*=\alpha_{n_{\tau}}^*+o(1)$ for all $j=n_{\tau-1}+1,\cdots, n_{\tau}$ and
		\begin{eqnarray}\label{eqnnnew1116}
			\sum_{\tau=2}^{k-1}(n_{\tau}-n_{\tau-1})\alpha_{n_\tau}^*+(n_{1}+n_{k}-n_{k-1})\alpha_{n_1}^*=2\pi+o(1).
		\end{eqnarray}
		Now, intersecting these into \eqref{eqnn9989} and \eqref{eqnnn9989} and noting that $\rho_j=\rho+O(1)$, we have
		\begin{eqnarray*}
			\mathcal{J}(\overrightarrow{\rho},\overrightarrow{\alpha})
			&=&\vartheta\sum_{j=1}^{d}\bigg( m_j+\frac{B_j\delta_j+o(1)}{(\rho^*\vartheta\log\vartheta)^{\nu_j}}-\frac{C_j+o(1)}{(\rho^*\log\vartheta)^{\frac{1}{2}}\vartheta^{2\pi\sqrt{\lambda_{j}}\rho^*}}\bigg)\\
			&&-\sum_{\tau=2}^{k-1}\bigg(\frac{D_{\tau}\sum_{j=n_{\tau-1}+1}^{n_{\tau}-1}\beta_{j,j+1}\vartheta}
			{(\rho^*\alpha_{n_{\tau}}^*\log\vartheta)^{\frac12}\vartheta^{2\sqrt{\lambda_{n_{\tau}}}\rho^*\alpha_{n_{\tau}}^*}}+\frac{D_{\tau}''\beta_{n_{\tau},n_{\tau}+1}\vartheta}
			{(\rho^*\alpha_{n_{\tau}}^*\log\vartheta)\vartheta^{2\sqrt{\lambda_{n_{\tau}}}\rho^*\alpha_{n_{\tau}}^*}}\bigg)\\
			&&-\bigg(\frac{D_{1}\sum_{j=1}^{n_{1}-1}\beta_{j,j+1}\vartheta(n_{1}+n_{k}-n_{k-1})^{\frac12}}
			{(\rho^*(2\pi-\sum_{\tau=2}^{k-1}\alpha_{n_{\tau}}^*)\log\vartheta)^{\frac12}\vartheta^{2\sqrt{\lambda_{n_{1}}}\rho^*\frac{
						2\pi-\sum_{\tau=2}^{k-1}(n_{\tau}-n_{\tau-1})\alpha_{n_\tau}^*}{n_{1}+n_{k}-n_{k-1}}}}\\
			&&+\frac{D_{1}''\beta_{n_{1},n_{1}+1}\vartheta(n_{1}+n_{k}-n_{k-1})}
			{(\rho^*(2\pi-\sum_{\tau=2}^{k-1}\alpha_{n_{\tau}}^*)\log\vartheta)\vartheta^{2\sqrt{\lambda_{n_{1}}}\rho^*\frac{
						2\pi-\sum_{\tau=2}^{k-1}(n_{\tau}-n_{\tau-1})\alpha_{n_1}^*}{n_{1}+n_{k}-n_{k-1}}}}\bigg)+h.o.t.\\
			&=:&\mathcal{J}(\rho_*,\overrightarrow{\alpha}_*)+h.o.t.
		\end{eqnarray*}
		for $N=2$ and
		\begin{eqnarray*}
			\mathcal{J}(\overrightarrow{\rho},\overrightarrow{\alpha})
			&=&\vartheta\sum_{j=1}^{d}\bigg( m_j+\frac{B_j\delta_j+o(1)}{(\rho^*\vartheta\log\vartheta)^{\nu_j}}-\frac{C_j+o(1)}{(\rho^*\log\vartheta)\vartheta^{2\pi\sqrt{\lambda_{j}}\rho^*}}\bigg)\\
			&&-\sum_{\tau=2}^{k-1}\bigg(\frac{D_{\tau}\sum_{j=n_{\tau-1}+1}^{n_{\tau}-1}\beta_{j,j+1}\vartheta\log(\rho^*\alpha_{n_\tau}^*\log\vartheta)}
			{(\rho^*\alpha_{n_{\tau}}^*\log\vartheta)^{2}\vartheta^{2\sqrt{\lambda_{n_{\tau}}}\rho^*\alpha_{n_{\tau}}^*}}
			+\frac{D_{\tau}''\beta_{n_{\tau},n_{\tau}+1}\vartheta}
			{(\rho^*\alpha_{n_{\tau}}^*\log\vartheta)^2\vartheta^{2\sqrt{\lambda_{n_{\tau}}}\rho^*\alpha_{n_{\tau}}^*}}\bigg)\\
			&&-\bigg(\frac{D_{1}\sum_{j=1}^{n_{1}-1}\beta_{j,j+1}\vartheta(n_{1}+n_{k}-n_{k-1})^{2}\log(\rho^*\alpha_{n_1}^*\log\vartheta)}
			{(\rho^*(2\pi-\sum_{\tau=2}^{k-1}\alpha_{n_{\tau}}^*)\log\vartheta)^{2}\vartheta^{2\sqrt{\lambda_{n_{1}}}\rho^*\frac{
						2\pi-\sum_{\tau=2}^{k-1}(n_{\tau}-n_{\tau-1})\alpha_{n_\tau}^*}{n_{1}+n_{k}-n_{k-1}}}}\\
			&&+\frac{D_{1}''\beta_{n_{1},n_{1}+1}\vartheta(n_{1}+n_{k}-n_{k-1})^2}
			{(\rho^*(2\pi-\sum_{\tau=2}^{k-1}\alpha_{n_{\tau}}^*)\log\vartheta)^2\vartheta^{2\sqrt{\lambda_{n_{1}}}\rho^*\frac{
						2\pi-\sum_{\tau=2}^{k-1}(n_{\tau}-n_{\tau-1})\alpha_{n_1}^*}{n_{1}+n_{k}-n_{k-1}}}}\bigg)+h.o.t.\\
			&=:&\mathcal{J}(\rho_*,\overrightarrow{\alpha}_*)+h.o.t.
		\end{eqnarray*}
		for $N=3$, where $\overrightarrow{\alpha}_*=(\alpha_{n_2}^*,\cdots,\alpha_{n_{k-1}}^*)$.
		Let
		\begin{eqnarray*}
			\mathcal{M}=\{\alpha_{n_\tau}^*\in(0, 2\pi)\mid \alpha_{n_{\tau}+1}\leq\alpha_{n_{\tau}}\text{ for all }\tau=2,\cdots,k-1\}.
		\end{eqnarray*}
		Since $\overline{\mathcal{M}}$ is compact, the restriction of $\mathcal{J}(\rho_*,\overrightarrow{\alpha}_*)$ on $\overline{\mathcal{M}}$ attains its maximum and minimum at some points, say $\overrightarrow{\alpha}_{*,\flat}$ and $\overrightarrow{\alpha}_{*,\ddag}$, respectively.
		We claim that $\overrightarrow{\alpha}_{*,\flat}\in\text{int}(\mathcal{M})$ in the case of $(a)$ and $\overrightarrow{\alpha}_{*,\ddag}\in\text{int}(\mathcal{M})$ in the cases of $(b)$, $(c)$ and $(d)$.  Indeed, assume that $\overrightarrow{\alpha}_{*,\flat}\in\partial\mathcal{M}$, then one of the three cases must happen:
		\begin{enumerate}
			\item[$(i)$]\quad  $\alpha_{n_\tau}^{*,\flat}=0$ for some $\tau$,
			\item[$(ii)$]\quad  $\alpha_{n_{\tau'}}^{*,\flat}=2\pi$ for some $\tau'$,
			\item[$(iii)$]\quad  $\alpha_{n_\tau}^{*,\flat}=\alpha_{n_\tau'}^{*,\flat}$ for some $\tau'>\tau$.
		\end{enumerate}
		Clearly, $\mathcal{J}(\rho,\overrightarrow{\alpha}_*)\to-\infty$ as $\alpha_{\tau}^*\to0$ or $\alpha_{\tau}^*\to2\pi$ for some $\tau$ in the case of $(a)$.  Thus, we must have the case~$(iii)$.
		Note that
		\begin{eqnarray*}
			\partial_{\alpha_{n_{\tau}}^*}\bigg(\frac{D_\tau^*}
			{(\rho^*\alpha_{n_{\tau}}^*\log\vartheta)^{a}\vartheta^{2\sqrt{\lambda_{n_{\tau}}}\rho^*\alpha_{n_{\tau}}^*}}+\frac{D_{1}^*}
			{(\rho^*(2\pi-\sum_{\tau=2}^{k-1}\alpha_{n_{\tau}}^*)\log\vartheta)^{b}\vartheta^{2\sqrt{\lambda_{1}}\rho^*\frac{
						2\pi-\sum_{\tau=2}^{k-1}(n_{\tau}-n_{\tau-1})\alpha_{n_\tau}^*}{n_{1}+n_{k}-n_{k-1}}}}\bigg)\leq0
		\end{eqnarray*}
		is equivalent to
		\begin{eqnarray}\label{eqnn9986}
			2\sqrt{\lambda_{n_{\tau}}}\rho^*\alpha_{n_{\tau}}^*- 2\sqrt{\lambda_{1}}\rho^*\frac{2\pi-\sum_{\tau=2}^{k-1}(n_{\tau}-n_{\tau-1})\alpha_{n_\tau}^*}{n_{1}+n_{k}-n_{k-1}}+o(1)\leq0
		\end{eqnarray}
		for all $a,b>0$ and $D_\tau^*,D_1^*>0$.
		Thus, $(\overrightarrow{\rho}_{*,\flat},\overrightarrow{\alpha}_{*,\flat})$ must satisfies \eqref{eqnn9986} for $\tau'$ and satisfies
		\begin{eqnarray}\label{eqnn9985}
			2\sqrt{\lambda_{n_{\tau}}}\rho^*\alpha_{n_{\tau}}^*-2\sqrt{\lambda_{1}}\rho^*\frac{
				2\pi-\sum_{\tau=2}^{k-1}(n_{\tau}-n_{\tau-1})\alpha_{n_\tau}^*}{n_{1}+n_{k}-n_{k-1}}+o(1)\geq0
		\end{eqnarray}
		for $\tau$ since $\tau'>\tau$.  It follows from \eqref{eqnn9986} and \eqref{eqnn9985} that $\sqrt{\lambda_{\tau'}}\rho_{\tau'}^*\alpha_{\tau'}^{*,\flat}\leq \sqrt{\lambda_{\tau}}\rho_{\tau}^*\alpha_{\tau}^{*,\flat}$,
		which is impossible since $\tau'>\tau$ and $\alpha_{\tau}^{*,\flat}=\alpha_{\tau'}^{*,\flat}$.  Thus, $\max_{\overrightarrow{\alpha}_*\in\mathcal{M}}\mathcal{J}(\rho_*,\overrightarrow{\alpha}_*)$
		is attained at some point of int$(\mathcal{M})$ for $N=2$.  For $N=3$, the argument is similar so we omit it.  In the cases~$(b)$, $(c)$ and $(d)$, we know that $\mathcal{J}(\rho,\overrightarrow{\alpha}_*)\to+\infty$ as $\alpha_{\tau}^*\to0$ or $\alpha_{\tau}^*\to2\pi$ for some $\tau$.  Thus, by similar arguments, we have $\overrightarrow{\alpha}_{*,\ddag}\in\text{int}(\mathcal{M})$.  We denote this point (the maximum point in the case~$(a)$ and the minimum point in the cases~$(b)$, $(c)$ and $(d)$) by $\overrightarrow{\alpha}_{*,0}(\rho_*)$ for every $\rho_*>0$.  By \eqref{eqnn9986}, we know that $\overrightarrow{\alpha}_{*,0}(\rho_*)$ satisfies
		\begin{eqnarray}\label{eqnnnew1113}
			\sqrt{\lambda_{n_\tau}}\alpha_{n_\tau}^{*,0}(\rho_*)&=&\sqrt{\lambda_{n_\tau'}}\alpha_{n_\tau'}^{*,0}(\rho_*)+o(1)\notag\\
			&=&\sqrt{\lambda_{1}}\frac{
				2\pi-\sum_{\tau=2}^{k-1}(n_{\tau}-n_{\tau-1})\alpha_{n_\tau}^*}{n_{1}+n_{k}-n_{k-1}}+o(1)
		\end{eqnarray}
		for all $\tau=2,\cdots,k-1$, which, together with $\alpha_{n_k}=(1+o(1))\alpha_{n_1}$ and \eqref{eqnnnew1116}, implies that
		\begin{eqnarray*}
			\alpha_{n_\tau}^{*,0}(\rho_*)=\alpha_{n_\tau}^{**,0}+o(1)\quad\text{for all }\tau=1,2,\cdots,k,
		\end{eqnarray*}
		where
		\begin{eqnarray*}
			\alpha_{n_\tau}^{**,0}=\frac{2\pi}{\sqrt{\lambda_{n_\tau}}(\frac{n_{k}-n_{k-1}}{\sqrt{\lambda_{n_1}}}
				+\sum_{s=1}^{k-1}\frac{n_{s}-n_{s-1}}{\sqrt{\lambda_{n_s}}})}\quad\text{for all }\tau=1,2,\cdots,k-1\text{ and }\alpha_{n_k}^{**,0}=\alpha_{n_1}^{**,0}.
		\end{eqnarray*}
		Now, suppose that $\rho_*\geq\ve$ for a sufficiently small $\ve>0$ which is independent of $\vartheta$. Recall that $\nu_*=\min\{\nu_j\}$ and $\mathfrak{m}_{*}=\{j=1,2,\cdots,d\mid \nu_j=\nu_*\}$, then by \eqref{eqnnnew1113},
		\begin{eqnarray*}
			\mathcal{J}(\rho_*,\overrightarrow{\alpha}_{**,0})=\vartheta(\sum_{j=1}^dm_j+\frac{\sum_{j\in\mathfrak{m}_{*}}B_j\delta_j}{(\rho_*\vartheta\log\vartheta)^{\nu_*}}
			-\frac{2C_1+\pi^{-\frac12}D_1\beta_{1,2}}{(\rho^*\log\vartheta)^{\frac12}\vartheta^{\lambda_*\rho^*}})+h.o.t.
		\end{eqnarray*}
		in the case $d=2$ and $N=2$ with $\lambda_1=\lambda_2$,
		\begin{eqnarray*}
			\mathcal{J}(\rho_*,\overrightarrow{\alpha}_{**,0})=\vartheta(\sum_{j=1}^dm_j+\frac{\sum_{j\in\mathfrak{m}_{*}}B_j\delta_j}{(\rho_*\vartheta\log\vartheta)^{\nu_*}}
			-\frac{C_1}{(\rho^*\log\vartheta)^{\frac12}\vartheta^{\lambda_*\rho^*}})+h.o.t.
		\end{eqnarray*}
		in the case $d=2$ and $N=2$ with $\lambda_1\not=\lambda_2$,
		\begin{eqnarray*}
			\mathcal{J}(\rho_*,\overrightarrow{\alpha}_{**,0})&=&\vartheta(\sum_{j=1}^dm_j+\frac{\sum_{j\in\mathfrak{m}_{*}}B_j\delta_j}{(\rho_*\vartheta\log\vartheta)^{\nu_*}}
			-\frac{\sum_{j=n_{k-1}+1}^{n_k-1}D_\tau\beta_{j,j+1}}{(\rho^*\alpha_{n_{k-1}}^{**,0}\log\vartheta)^{\frac12}\vartheta^{\lambda_*\rho^*}}\\
			&&-\frac{D_{k-1}''\beta_{n_{k-1},n_{k-1}+1}}
			{(\rho^*\alpha_{n_{k-1}}^*\log\vartheta)^{\frac12}\vartheta^{2\lambda_{**}\rho^*}})+h.o.t.
		\end{eqnarray*}
		in the case $d\geq3$ and $N=2$,
		\begin{eqnarray*}
			\mathcal{J}(\rho_*,\overrightarrow{\alpha}_{**,0})=\vartheta(\sum_{j=1}^dm_j+\frac{\sum_{j\in\mathfrak{m}_{*}}B_j\delta_j}{(\rho_*\vartheta\log\vartheta)^{\nu_*}}
			-\frac{\sum_{j=1}^{n_1}C_j}{(\rho^*\log\vartheta)\vartheta^{\lambda_*\rho^*}})+h.o.t.
		\end{eqnarray*}
		in the case of $N=3$ and $d=2$ and
		\begin{eqnarray*}
			\mathcal{J}(\rho_*,\overrightarrow{\alpha}_{**,0})&=&\vartheta(\sum_{j=1}^dm_j+\frac{\sum_{j\in\mathfrak{m}_{*}}B_j\delta_j}{(\rho_*\vartheta\log\vartheta)^{\nu_*}}
			-\frac{\sum_{j=n_{k-1}+1}^{n_k-1}D_\tau\beta_{j,j+1}\log(\rho^*\alpha_{n_{k-1}}^*\log\vartheta)}{(\rho^*\alpha_{n_{k-1}}^*\log\vartheta)^{2}\vartheta^{\lambda_*\rho^*}}\\
			&&-\frac{D_{k-1}''\beta_{n_{k-1},n_{k-1}+1}\log(\rho^*\alpha_{n_{k-1}}^*\log\vartheta)}
			{(\rho^*\alpha_{n_{k-1}}^*\log\vartheta)\vartheta^{2\lambda_{**}\rho^*}})+h.o.t.
		\end{eqnarray*}
		in the case $d\geq3$ and $N=3$ for $\vartheta>0$ sufficiently large, where $\lambda_*,\lambda_{**}>0$ are constants which are independent of $\ve$.  Thus, by direct calculations and taking $\ve>0$ sufficiently small if necessary, we can see that
		\begin{eqnarray*}
			\max_{\rho_*\geq\ve}\mathcal{J}(\rho_*,\overrightarrow{\alpha}_{**,0})
		\end{eqnarray*}
		is attained at $\rho_*=\frac{\nu_*}{\lambda_*}+o(1)$ in the case $(a)$ and $(c)$, and
		\begin{eqnarray*}
			\min_{\rho_*\geq\ve}\mathcal{J}(\rho_*,\overrightarrow{\alpha}_{**,0})
		\end{eqnarray*}
		attained at $\rho_*=\frac{\nu_*}{\lambda_*}+o(1)$ in the case $(b)$ and $(d)$.  It follows from the fact that $\overrightarrow{\alpha}_{*,0}(\rho_*)$ is the maximum point in the case $(a)$ that $\max_{\rho_*\geq\ve,\overrightarrow{\alpha}_*\in\mathcal{M}}\mathcal{J}(\rho_*,\overrightarrow{\alpha}_*)$
		is attained by some points of $(\rho_{***},\overrightarrow{\alpha}_{***})$ in the case $(a)$, which implies that $\mathcal{J}(\overrightarrow{\rho},\overrightarrow{\alpha})$ has a critical point with
		$\rho_{0}^*>0$ and $\alpha_{j,0}^*\in(0, 2\pi)$ for all $j$ if $\vartheta>0$ sufficiently large in the case $(a)$.  In the case $(b)$ and $(d)$, by the fact that $\overrightarrow{\alpha}_{*,0}(\rho_*)$ is the minimum point, we know that $\min_{\rho_*\geq\ve,\overrightarrow{\alpha}_*\in\mathcal{M}}\mathcal{J}(\rho_*,\overrightarrow{\alpha}_*)$
		is attained by some points $(\rho_{***},\overrightarrow{\alpha}_{***})$, which implies that $\mathcal{J}(\overrightarrow{\rho},\overrightarrow{\alpha})$ has a critical point with
		$\rho_{0}^*>0$ and $\alpha_{j,0}^*\in(0, 2\pi)$ for all $j$ if $\vartheta>0$ sufficiently large in the case $(b)$ and $(d)$.  In the case $(c)$, we denote $\widetilde{\mathcal{J}}(\rho_*,\overrightarrow{\alpha}_*)=-\mathcal{J}(\rho_*,\overrightarrow{\alpha}_*)$.
		Then, we know that $\min_{\rho_*\in\mathcal{N}}\max_{\overrightarrow{\alpha}_*\in\mathcal{M}}\widetilde{\mathcal{J}}(\rho_*,\overrightarrow{\alpha}_*)$
		is attained by some points of int$(\mathcal{N}\times\mathcal{M})$, where $\mathcal{N}=\{\rho_*\geq\ve\}$.
		Again, we denote this point by $(\rho_{***},\overrightarrow{\alpha}_{***})$.  Now, let
		\begin{eqnarray*}
			\mathcal{M}_{\delta,*}=\{\overrightarrow{\alpha}_*\in\mathcal{M}\mid |\overrightarrow{\alpha}_*-\overrightarrow{\alpha}_{***}|\leq\delta, \rho_*=\rho_{***}\}
		\end{eqnarray*}
		and
		\begin{eqnarray*}
			\mathcal{N}_*=\{\rho_*\geq\ve\mid \overrightarrow{\alpha}_*=\overrightarrow{\alpha}_{**,0}\}.
		\end{eqnarray*}
		where we take $\delta>0$ small such that
		\begin{eqnarray*}
			\max_{\partial\mathcal{M}_{\delta,*}}\widetilde{\mathcal{J}}(\rho_*,\overrightarrow{\alpha}_*)
			<\min_{\mathcal{N}_*}\widetilde{\mathcal{J}}(\rho_*,\overrightarrow{\alpha}_*).
		\end{eqnarray*}
		Since $\partial\overline{\mathcal{M}}_{\delta,*}$ is homeomorphic to the ball of $\bbr^{k-2}$, it is standard to show that $\partial\mathcal{M}_{\delta,*}$ links to $\mathcal{N}_*$ (cf. \cite[Chapter~2]{W96}).  Thus, by the linking theorem (cf. \cite[Theorem~2.9]{W96}),
		\begin{eqnarray*}
			\mathcal{C}=\min_{\varphi\in\Gamma}\max_{(\rho_*,\overrightarrow{\alpha}_*)\in\mathcal{M}_{\delta,*}}\widetilde{\mathcal{J}}(\varphi(\rho_*,\overrightarrow{\alpha}_*))
		\end{eqnarray*}
		is a critical value, where $\Gamma=\{\varphi\in C(\mathcal{M}_{\delta,*}, \bbr^{k-1})\mid \varphi(\partial\mathcal{M}_{\delta,*})=id\}$.  It follows that $\mathcal{J}(\overrightarrow{\rho},\overrightarrow{\alpha})$ has a critical point with
		$\rho_{j,0}^*>0$ and $\alpha_{j,0}^*\in(0, 2\pi)$ for all $j$ if $\vartheta>0$ sufficiently large in the case $(c)$.
	\end{proof}
	
	We close this section by the proof of Theorem~\ref{thm0001}.
	
	\vskip0.12in
	
	\noindent\textbf{Proof of Theorem~\ref{thm0001}:} This proof follows immediately from Lemma~\ref{lem0005} and Proposition~\ref{prop0002}.
	\hfill$\Box$

	\section{Acknowledgements}
	The research of J. Wei is partially supported by NSERC of Canada and the research of Y. Wu is supported by NSFC (No. 11971339, 12171470).

\end{document}